\numberwithin{equation}{section}
\def\today{\number\day\space\ifcase\month\or   January\or February\or
   March\or April\or May\or June\or   July\or August\or September\or
   October\or November\or December\fi{}  \number\year}
\theoremstyle{definition}
\newtheorem{thm}{Theorem}[section]
\newtheorem{lem}[thm]{Lemma}
\newtheorem{prp}[thm]{Proposition}
\newtheorem{dfn}[thm]{Definition}
\newtheorem{cor}[thm]{Corollary}
\newtheorem{rmk}[thm]{Remark}
\newtheorem{ntn}[thm]{Notation}
\newtheorem{exa}[thm]{Example}
\newtheorem{pbm}[thm]{Problem}
\newtheorem{qst}[thm]{Question}
\newcommand{\beq}{\begin{equation}}
\newcommand{\eeq}{\end{equation}}
\newcommand{\beqr}{\begin{eqnarray*}}
\newcommand{\eeqr}{\end{eqnarray*}}
\newcommand{\bal}{\begin{align*}}
\newcommand{\eal}{\end{align*}}
\newcommand{\bei}{\begin{itemize}}
\newcommand{\eei}{\end{itemize}}
\newcommand{\limi}[1]{\lim_{{#1} \to \infty}}
\newcommand{\af}{\alpha}
\newcommand{\bt}{\beta}
\newcommand{\gm}{\gamma}
\newcommand{\dt}{\delta}
\newcommand{\ep}{\varepsilon}
\newcommand{\zt}{\zeta}
\newcommand{\et}{\eta}
\newcommand{\ch}{\chi}
\newcommand{\ld}{\lambda}
\newcommand{\sm}{\sigma}
\newcommand{\ph}{\varphi}
\newcommand{\ps}{\psi}
\newcommand{\rh}{\rho}
\newcommand{\om}{\omega}
\newcommand{\ta}{\tau}
\newcommand{\Dt}{\Delta}
\newcommand{\Q}{{\mathbb{Q}}}
\newcommand{\Z}{{\mathbb{Z}}}
\newcommand{\R}{{\mathbb{R}}}
\newcommand{\C}{{\mathbb{C}}}
\newcommand{\N}{{\mathbb{Z}}_{> 0}}
\newcommand{\Nz}{{\mathbb{Z}}_{\geq 0}}
\newcommand{\inv}{{\mathrm{inv}}}
\newcommand{\id}{{\mathrm{id}}}
\newcommand{\spn}{{\mathrm{span}}}
\newcommand{\card}{{\mathrm{card}}}
\newcommand{\Aut}{{\mathrm{Aut}}}
\newcommand{\tr}{{\operatorname{tr}}}
\newcommand{\dirlim}{\varinjlim}
\newcommand{\andeqn}{\,\,\,\,\,\, {\mbox{and}} \,\,\,\,\,\,}
\newcommand{\tfae}{the following are equivalent}
\newcommand{\ifo}{if and only if}
\newcommand{\ca}{C*-algebra}
\newcommand{\hm}{homomorphism}
\newcommand{\fd}{finite dimensional}
\newcommand{\ct}{continuous}
\newcommand{\cfn}{continuous function}
\newcommand{\rpn}{representation}
\newcommand{\sfm}{$\sm$-finite measure space}
\newcommand{\XBM}{(X, {\mathcal{B}}, \mu)}
\newcommand{\YCN}{(Y, {\mathcal{C}}, \nu)}
\newcommand{\cB}{{\mathcal{B}}}
\newcommand{\cC}{{\mathcal{C}}}
\newcommand{\LLp}{L (L^p (X, \mu))}
\newcommand{\OP}[2]{{\mathcal{O}}_{#1}^{#2}}
\newcommand{\MP}[2]{M_{#1}^{#2}}
\newcommand{\ov}{\overline}
\newcommand{\I}{\infty}
\newcommand{\E}{\varnothing}
\newcommand{\Lem}[1]{Lemma~\ref{#1}}
\newcommand{\Def}[1]{Definition~\ref{#1}}
\title[UHF and Cuntz algebras on $L^p$~spaces]{Simplicity
    of UHF and Cuntz algebras on $L^p$~spaces}
\author{N.~Christopher Phillips}
\date{14~September 2013}
\address{Department of Mathematics, University  of Oregon,
       Eugene OR 97403-1222, USA}
\email[]{ncp@darkwing.uoregon.edu}
\subjclass[2010]{Primary 46H20; Secondary 46H05, 47L10.}
\thanks{This material is based upon work supported by the
  US National Science Foundation under Grants
  DMS-0701076 and DMS-1101742.
  It was also partially supported by the Centre de Recerca
  Matem\`{a}tica (Barcelona) through a research visit conducted
  during 2011,
  and by the Research Institute for Mathematical Sciences
  of Kyoto University through a visiting professorship
  in 2011--2012.}
\begin{document}

\begin{abstract}
We prove that, for $1 \leq p < \infty$
and $d \in \{ 2, 3, 4, \ldots \},$
the $L^p$~analog ${\mathcal{O}}_d^p$
of the Cuntz algebra ${\mathcal{O}}_d$
is a purely infinite simple amenable Banach algebra.

The proof requires what we call the spatial $L^p$~UHF algebras,
which are analogs of UHF algebras acting on $L^p$~spaces.
As for the usual UHF C*-algebras,
they have associated supernatural numbers.
For fixed $p \in [1, \infty),$
we prove that any spatial $L^p$~UHF algebra is simple and amenable,
and that two such algebras are isomorphic
if and only if they have the same supernatural number
(equivalently, the same scaled ordered $K_0$-group).
For distinct $p_1, p_2 \in [1, \infty),$
we prove that no spatial $L^{p_1}$~UHF algebra
is isomorphic to any spatial $L^{p_2}$~UHF algebra.
\end{abstract}

\maketitle

\indent
Analogs~$\OP{d}{p}$ of the Cuntz algebras~${\mathcal{O}}_d,$
but acting on $L^p$~spaces,
were introduced in~\cite{Ph5}.
(See Definition~\ref{D:LPCuntzAlg} below.)
For $d \in \{ 2, 3, 4, \ldots \}$
and $p \in [1, \I) \setminus \{ 2 \},$
Theorem~8.7 of~\cite{Ph5} is a uniqueness theorem
for $\OP{d}{p},$
but simplicity of $\OP{d}{p}$ is not proved in~\cite{Ph5}.
The purpose of this paper is to prove that $\OP{d}{p}$
is purely infinite in a suitable sense, simple,
and
amenable as a Banach algebra.
As an intermediate step,
we define and study analogs of UHF algebras on $L^p$~spaces.
We prove simplicity and existence of a unique normalized trace
for a fairly large class of such algebras,
and amenability and a uniqueness theorem
for the standard examples (which we call spatial $L^p$~UHF algebras).
We also prove that two of our standard examples are isomorphic
\ifo{} they use the same value of~$p$
and they have the same scaled ordered $K_0$-group.
We mostly avoid the intricacies of K-theory
by using the supernatural number as our invariant.
In a separate paper~\cite{Ph-Lp2b},
we consider a somewhat more restrictive class than here.
Within this class,
for each $p \in [1, \I)$ and each possible $K_0$-group of a UHF algebra,
we exhibit uncountably many isomorphism classes of $L^p$~UHF algebras
with the given $K_0$-group.
We also show that, within this class,
amenability implies isomorphism with a spatial $L^p$~UHF algebra.

In the \ca{} case ($p = 2$),
simplicity for ${\mathcal{O}}_d$ is equivalent to uniqueness.
For $p \neq 2,$
we know of no method of getting either of simplicity or uniqueness
from the other.
To get simplicity from uniqueness,
one needs to know that a purported proper quotient
of~$\OP{d}{p}$
can again be represented on an $L^p$~space.
The closest we know to this is the result of~\cite{LM},
but for $p \neq 2$ this result does not suffice.
(For $p = 1,$ in fact it says nothing at all.)
To get uniqueness from simplicity,
we need to know that any continuous homomorphism
from a simple Banach algebra of operators
on an $L^p$~space to another one is isometric;
for a weaker form of uniqueness,
we at least need to know that such a homomorphism
has closed range.
Examples in~\cite{Ph-Lp2b} show that this is false
for general $L^p$~UHF algebras.

The proof of uniqueness in~\cite{Ph5}
is unrelated to the usual methods used in \ca{s}.
Indeed, it does not work for \ca{s}.
The proof of simplicity and pure infiniteness given here
follows the method of the original proof~\cite{Cu1} in the \ca{} case.
Much more care is needed,
and the proofs depend heavily on the theory of spatial \rpn{s}
developed in~\cite{Ph5},
but the basic outline is the same.

We do not consider $L^p$~analogs of general AF~algebras in this paper,
only $L^p$~analogs of UHF algebras,
since they are notationally simpler,
automatically simple,
avoid extra complications,
and suffice for the purposes here.
In particular,
the theory developed here and in~\cite{Ph-Lp2b}
seems inadequate to handle commutative $L^p$~analogs of AF algebras.
However,
$L^p$~analogs of AF algebras
seem to be interesting in their own right,
and will be further studied in~\cite{Vl}.

Section~\ref{Sec:Plm} contains preliminary material.
We recall some (not all) of the important facts from~\cite{Ph5}
which we use,
and we describe the $L^p$~analog of the minimal (spatial)
tensor product of \ca s.
In Section~\ref{Sec:CondExpt},
we give some material on conditional expectations on Banach algebras
and isometric actions of compact groups on Banach algebras.
This material is needed for technical purposes in the rest of the paper.
Section~\ref{Sec_N_UHF} introduces $L^p$~analogs of UHF algebras,
including the ``canonical'' class consisting of
the spatial $L^p$~UHF algebras.
It contains a uniqueness theorem for the spatial $L^p$~UHF algebras
and theorems on simplicity and uniqueness of the trace
for a larger class.
We also prove that spatial $L^p$~UHF algebras
are amenable in the Banach algebra sense.
In Section~\ref{Sec_Diffp}, we prove
that two spatial $L^p$~UHF algebras are isomorphic
\ifo{} they have the same supernatural number
and use the same choice of~$p.$
This material is not used later.
We leave several problem open.
In Section~\ref{Sec:Simplicity},
we prove simplicity, pure infiniteness,
and amenability of~$\OP{d}{p}.$

Scalars will always be~$\C$
(except in part of Section~\ref{Sec_Diffp},
where we need to quote results stated
for real scalars).
Presumably everything here works for real scalars,
except that the K-theory will be different.
(The proof, in~\cite{Ph5}, of the equivalence of some of the
conditions for a representation of a finite dimensional matrix algebra
to be spatial,
depends on complex scalars,
although the result might still be valid in the real case.
The affected implications are probably not needed here.)

We will use the following notation throughout.

\begin{ntn}\label{N_2Y15_TrInv}
For $d \in \N,$
we let $\tr \colon M_d \to \C$ be the normalized trace,
that is,
\[
\tr \big( (b_{j, k})_{j, k = 1}^d \big)
 = \frac{1}{d} \sum_{j = 1}^d b_{j, j}.
\]
If we want to make the matrix size explicit,
we write $\tr_d \colon M_d \to \C.$
Also, for any unital algebra~$A,$
we let $\inv (A)$ denote the group of invertible elements of~$A.$
If $E$ and $F$ are Banach spaces,
we write $L (E, F)$ for the space of bounded linear operators
from $E$ to~$F,$
and $L (E)$ for $L (E, E).$
For $a \in L (E, F),$
we denote its range by ${\mathrm{ran}} (a).$
\end{ntn}

A unital representation of a unital complex algebra on a Banach space
has the obvious meaning.
(See Definition~2.8 of~\cite{Ph5}.)
We take all algebras and representations to be unital.

As in~\cite{Ph5},
isomorphisms of Banach spaces and Banach algebras
are not assumed to be isometric.

We are grateful to
Narutaka Ozawa for suggesting that we consider amenability
of our algebras and supplying the reference~\cite{Rs}.
We are especially grateful to Bill Johnson
for extensive discussions about Banach spaces,
to Guillermo Corti\~{n}as
and Mar\'{\i}a Eugenia Rodr\'{\i}guez,
who carefully read an early draft
and whose comments led to numerous
corrections and improvements,
and to Maria Grazia Viola, whose reading of a later
draft led to further corrections and improvements.
Some of this work was carried out during a visit to
the Instituto Superior T\'{e}cnico,
Universidade T\'{e}cnica de Lisboa,
and during an extended research visit to
the Centre de Recerca Matem\`{a}tica (Barcelona).
I also thank the Research Institute for Mathematical Sciences
of Kyoto University for its support through a visiting professorship.
I am grateful to all these institutions for their hospitality.

\section{Preliminaries}\label{Sec:Plm}

\indent
In this section, we recall the definitions
of the Banach algebras~$\MP{d}{p}$
(algebraically just the algebra $M_d$ of $d \times d$ complex matrices)
and~$\OP{d}{p}$ (a suitable completion of the Leavitt algebra~$L_d$).
We also give definitions of spatial \rpn s of these algebras.
We then discuss the $L^p$~spatial tensor product of
closed subalgebras $A \subset \LLp$ and $B \subset L (L^p (Y, \nu)).$

We start with~$\MP{d}{p}.$

\begin{ntn}\label{N:FDP}
For any set~$S,$ we give $l^p (S)$ the usual
meaning (using counting measure on~$S$),
and we take (as usual) $l^p = l^p (\N).$
For $d \in \N$ and $p \in [1, \I],$
we let $l^p_d = l^p \big( \{1, 2, \ldots, d \} \big).$
We further let $\MP{d}{p} = L \big( l_d^p \big)$
with the usual operator norm,
and we algebraically identify $\MP{d}{p}$ with the algebra $M_d$ of
$d \times d$ complex matrices in the standard way.
\end{ntn}

We warn of a notational conflict.
Many articles on Banach spaces use $L_p (X, \mu)$
rather than $L^p (X, \mu),$
and use $l_p^d$ for what we call~$l_d^p.$
Our convention is chosen
to avoid conflict with the standard notation
for the Leavitt algebra $L_d$ of Definition~\ref{D:Leavitt} below.

\begin{dfn}\label{D_2Y14_SpMd}
Let $d \in \{ 2, 3, 4, \ldots \},$
let $p \in [1, \I) \setminus \{ 2 \},$
let $\XBM$ be a \sfm,
and let $\rh \colon M_d \to \LLp$ be a \rpn.
We say that $\rh$ is {\emph{spatial}} if $\rh$
is contractive for the norm of Notation~\ref{N:FDP},
that is, $\| \rh (a) \| \leq \| a \|$ for all $a \in M_d.$
\end{dfn}

This is not the original definition of a spatial \rpn.
Instead, it is the equivalent condition
in Theorem 7.2(4) of~\cite{Ph5}.
We refer to Section~7 of~\cite{Ph5}
for motivation for this definition,
and note that Theorem 7.2 of~\cite{Ph5} gives a number
of equivalent conditions for a \rpn{} of~$M_d$
to be spatial.
One of them is important enough to be stated here for reference.

\begin{thm}\label{T_2Y17Iso}
Let the notation be as in Definition~\ref{D_2Y14_SpMd}.
Then $\rh$ is spatial \ifo{} $\rh$ is isometric.
\end{thm}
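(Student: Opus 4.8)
The implication ``isometric $\Rightarrow$ spatial'' is immediate, since an isometric homomorphism is in particular contractive. For the converse the plan is to show that a contractive (i.e.\ spatial) representation is, up to an isometric isomorphism of the underlying $L^p$-space, the representation $a \mapsto a \otimes 1$ of $M_d$ on an $L^p$~tensor product $l_d^p \otimes_p L^p (Y, \nu)$ (in the spirit of the tensor products of Section~\ref{Sec:Plm}); once this is known the conclusion is a short computation with the $L^p$~tensor norm. So assume $\rh$ is contractive, and (we may assume) $L^p (X, \mu) \neq \{ 0 \}$; then $\rh$ is injective because $M_d$ is simple.

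First I would assemble the usual skeleton. Writing $e_{j, k}$ for the matrix units, set $E_j = \rh (e_{j, j})$, let $F_j = {\mathrm{ran}} (E_j)$, and set $t_{j, k} = \rh (e_{j, k})$. The matrix-unit relations give that the $E_j$ are mutually orthogonal idempotents with $\sum_j E_j = 1$, so $L^p (X, \mu) = F_1 \oplus \cdots \oplus F_d$ as an algebraic direct sum of (automatically closed, nonzero) subspaces, and that $t_{j, k}$ restricts to a contraction $F_k \to F_j$ whose two-sided inverse is the restriction of $t_{k, j}$; being mutually inverse contractions, these restrictions are isometric isomorphisms. Using the $t_{1, j}$ to identify each $F_j$ isometrically with $F_1$, the representation $\rh$ is carried to $a \mapsto a \otimes {\mathrm{id}}_{F_1}$ on the direct sum of $d$ copies of $F_1$. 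At this point the algebra is all in place; what remains is to pin down the \emph{norm} on this direct sum.

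The crux --- the one genuinely hard step --- is to show that the decomposition $L^p (X, \mu) = \bigoplus_j F_j$ is isometrically an $\ell^p$-direct sum, i.e.
\[
\Big\| \sum_{j = 1}^d \xi_j \Big\|^p = \sum_{j = 1}^d \| \xi_j \|^p
\qquad {\mbox{for all }} \xi_j \in F_j .
\]
This is where $p \neq 2$ enters. (For $p = 2$ it is automatic: a contractive idempotent on Hilbert space is an orthogonal projection, and orthogonality of the $E_j$ forces the $F_j$ to be mutually orthogonal.) For $p \neq 2$ I would bring in the invertible isometries: the invertible isometries of $l_d^p$ are precisely the generalized permutation matrices, and $\rh$ sends each of them to an invertible isometry of $L^p (X, \mu)$. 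By Lamperti's theorem on the isometries of $L^p$-spaces ($p \neq 2$), each such operator is a weighted composition operator; applying this to the diagonal unitaries $\diag (e^{i \te_1}, \ldots, e^{i \te_d})$ --- whose images under $\rh$ are the operators $\sum_j e^{i \te_j} E_j$ --- and to the permutation matrices, and then exploiting the compatibility of the resulting underlying transformations and weight functions, one forces each $F_j$ to consist exactly of the functions supported on a fixed measurable set $X_j$, with $X_1, \ldots, X_d$ a partition of $X$ modulo null sets. The displayed identity then reduces to additivity of the $p$-th power of the norm over disjoint supports. This structural analysis is carried out in Section~7 of~\cite{Ph5}; indeed Theorem~\ref{T_2Y17Iso} is part of Theorem~7.2 there, and I expect it to be the main obstacle --- the reduction of contractivity to the existence of a spatial (generalized permutation) system is exactly where the special geometry of $L^p$ for $p \neq 2$ is indispensable.

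Granting the $\ell^p$-identity, $\rh$ is conjugate by an isometric isomorphism to $a \mapsto a \otimes {\mathrm{id}}_{F_1}$ on $l_d^p \otimes_p F_1$, with $F_1 \neq \{ 0 \}$. For the remaining inequality $\| \rh (a) \| \geq \| a \|$ only the easy half of the tensor estimate is needed: choose $v \in l_d^p$ and $f \in F_1$ with $\| v \| = \| f \| = 1$ and $\| a v \| = \| a \|$ (possible since $l_d^p$ is finite dimensional), and note $\| \rh (a) \| \geq \| (a \otimes {\mathrm{id}}_{F_1}) (v \otimes f) \| = \| a v \| \cdot \| f \| = \| a \|$. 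Together with contractivity this yields $\| \rh (a) \| = \| a \|$, so $\rh$ is isometric.
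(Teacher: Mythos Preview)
Your proposal is correct and aligns with the paper's approach: the paper's proof is nothing more than a citation of the equivalence of conditions~(3) and~(4) in Theorem~7.2 of~\cite{Ph5}, and you explicitly identify this as the source while additionally sketching the Lamperti-based structural argument that underlies it. Your outline of that argument (matrix-unit skeleton, reduction to an $\ell^p$-direct-sum identity via the images of generalized permutation matrices, then the easy tensor estimate) accurately reflects what happens in Section~7 of~\cite{Ph5}.
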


\begin{proof}This is the equivalence of conditions
(3) and~(4) of Theorem 7.2 of~\cite{Ph5}.
\end{proof}

For $p = 2,$
contractive representations need not be spatial
in the sense of Definition~7.1 of~\cite{Ph5}.
We use the following substitute.
It was pointed to us by Narutaka Ozawa,
and the reference was supplied by David Blecher.

\begin{lem}[Proposition A.5.8 of~\cite{BM}]\label{L_3717_CSt}
Let $A$ and $B$ be \ca{s},
and let $\ph \colon A \to B$ be an algebra \hm.
Then $\ph$ is contractive \ifo{} $\ph$ is a *-\hm.
\end{lem}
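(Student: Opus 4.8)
The statement to prove is the characterization: an algebra homomorphism $\ph \colon A \to B$ between C*-algebras is contractive if and only if it is a $*$-homomorphism. The plan is to establish the nontrivial direction, that a contractive homomorphism is automatically $*$-preserving; the converse is the standard fact that $*$-homomorphisms of C*-algebras are norm-decreasing. First I would reduce to the unital case by passing to unitizations: a contractive homomorphism $\ph$ need not be unital, but one can consider the unital homomorphism $\tilde\ph \colon \tilde A \to B^{**}$ (or onto the unitization of the closure of $\ph(A)$) obtained by sending the adjoined unit to a suitable idempotent; care is needed because $\ph(1)$ is only an idempotent of norm $\le 1$, and one should argue that a norm-one idempotent in a C*-algebra is a self-adjoint projection, so $\ph(1)$ is a projection and we may replace $B$ by the corner $\ph(1)B\ph(1)$, reducing to the case where $\ph$ is unital and contractive.

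The heart of the argument is then local: it suffices to show $\ph(a^*) = \ph(a)^*$ for every self-adjoint $a \in A$, since a general element is a complex linear combination of self-adjoints, and for that it suffices to show $\ph(a)$ is self-adjoint whenever $a = a^*$. Here I would use the classical trick that for self-adjoint $a$ with $\|a\| \le 1$ and real $t$, the element $\exp(ita)$ is unitary in $\tilde A$, hence has norm one, so $\|\ph(\exp(ita))\| = \|\exp(it\ph(a))\| \le 1$ for all real $t$; the same applies with $a$ replaced by $-a$, giving $\|\exp(\pm it\ph(a))\| \le 1$ for all $t \in \R$. Writing $\ph(a) = h + ik$ with $h, k$ self-adjoint in $B$ (or in $B^{**}$), one estimates the growth of $\|\exp(it\ph(a))\|$ as $t \to \pm\infty$: if $k \ne 0$ then $\spec(k)$ meets $\R \setminus \{0\}$, and a standard computation (using that $\exp(it\ph(a))$ can be analyzed via the power series, or better via the numerical-range / states argument) shows the norm grows exponentially, contradicting the uniform bound. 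Hence $k = 0$ and $\ph(a)$ is self-adjoint.

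The step I expect to be the main obstacle is the quantitative control of $\|\exp(it\ph(a))\|$: one needs an argument that converts the imaginary part $k$ of $\ph(a)$ into genuine exponential growth of the norm along the real line. The clean way to do this is via states: pick a state $\om$ on $B$ (extended suitably) that nearly attains $\|k\|$ on $k$, consider the scalar function $t \mapsto \om(\exp(it\ph(a)))$, and compare its growth to $e^{\|k\| |t|}$ using the Cauchy–Schwarz / GNS estimates; alternatively one invokes the Vidav–Palmer theorem, which states exactly that if $x$ and $-x^2$ (or all $\exp(itx)$, $t \in \R$) lie in the closed unit ball scaled appropriately then $x$ is self-adjoint, but since this is cited from~\cite{BM} it is legitimate simply to refer to that source. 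Once $\ph$ is known to be a $*$-homomorphism, contractivity follows because $*$-homomorphisms between C*-algebras are automatically contractive, completing the equivalence; and the forward direction is then immediate, so the proof consists almost entirely of the reduction to the unital self-adjoint case plus the exponential-growth (Vidav–Palmer) argument.
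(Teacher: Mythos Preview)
The paper gives no proof of this lemma at all: it is stated with the attribution ``Proposition A.5.8 of~\cite{BM}'' in the header and no proof environment follows, so the paper simply quotes the result from Blecher--Le Merdy. Your proposal therefore does strictly more than the paper, supplying an actual argument where the paper only cites.

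Your sketch is essentially the standard proof and is correct in outline. A couple of small remarks. First, the reduction to the unital case is slightly cleaner than you suggest: once you know $\ph(1)$ is a norm-one idempotent in a C*-algebra and hence a projection, cutting down to $\ph(1)B\ph(1)$ makes $\ph$ genuinely unital, and there is no need to pass to $B^{**}$. Second, the ``exponential growth'' step is the one place where your write-up is vague; the tidiest way to close it is the one you yourself mention at the end: from $\|\exp(\pm it\ph(a))\|\le 1$ for all real $t$ one gets $\|\exp(it\ph(a))\|=1$ for all real $t$, which is exactly the Bonsall--Duncan/Vidav characterization of hermitian elements, and in a C*-algebra hermitian means self-adjoint. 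That replaces the state-based growth estimate entirely. With that substitution your argument is complete and matches the proof one finds in~\cite{BM}.
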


We now turn to~$\OP{d}{p}.$
It is the operator norm closure of what we call in~\cite{Ph5}
a spatial representation on an $L^p$~space
of the Leavitt algebra~$L_d.$

\begin{dfn}\label{D:Leavitt}
Let $d \in \{ 2, 3, 4, \ldots \}.$
We define the {\emph{Leavitt algebra}} $L_d$
to be the universal unital complex associative algebra on
generators $s_1, s_2, \ldots, s_d, t_1, t_2, \ldots, t_d$
satisfying the relations:
\begin{enumerate}
\item\label{D:Leavitt_1}
$t_j s_j = 1$ for $j \in \{ 1, 2, \ldots, d \}.$
\item\label{D:Leavitt_2}
$t_j s_k = 0$ for $j, k \in \{ 1, 2, \ldots, d \}$ with $j \neq k.$
\item\label{D:Leavitt_3}
$\sum_{j = 1}^d s_j t_j = 1.$
\end{enumerate}
\end{dfn}

\begin{dfn}\label{D_2Y14_SpRep}
Let $d \in \{ 2, 3, 4, \ldots \},$
let $p \in [1, \I) \setminus \{ 2 \},$
let $\XBM$ be a \sfm,
and let $\rh \colon L_d \to \LLp$ be a \rpn.
We say that $\rh$ is {\emph{spatial}} if the following
two conditions are satisfied:
\begin{enumerate}
\item\label{D_2Y14_SpRep_MnSp}
Identifying
\[
B = \spn \big( \{ s_j t_k \colon j, k = 1, 2, \ldots, d \} \big)
\]
with $M_d$ in the usual way
(see the case $m = 1$ of Lemma~\ref{L:mSum} for details),
and then giving $B$ the norm from Notation~\ref{N:FDP},
the restriction $\rh |_B \colon B \to \LLp$ is contractive.
\item\label{D_2Y14_SpRep_Iso}
For $j = 1, 2, \ldots, d,$
we have $\| \rh (s_j) \| \leq 1$ and $\| \rh (t_j) \| \leq 1.$
\end{enumerate}
\end{dfn}

Again, this is not the original definition of a spatial \rpn.
Instead, it is the combination of Theorem 7.7(5)
and Theorem 7.2(4) of~\cite{Ph5}.
Theorems 7.2 and~7.7 of~\cite{Ph5} together
give many other equivalent conditions for a \rpn{} to be spatial.

Spatial \rpn s always exist (Lemma~7.5 of~\cite{Ph5}),
and the norm on $L_d$ defined by $x \mapsto \| \rh (x) \|$
is independent of the choice of the spatial \rpn~$\rh$
(Theorem~8.7 of~\cite{Ph5}).
Therefore the following definition makes sense.

\begin{dfn}[Definition~8.8 of~\cite{Ph5}]\label{D:LPCuntzAlg}
Let $d \in \{ 2, 3, 4, \ldots \}$
and let $p \in [1, \I) \setminus \{ 2 \}.$
We define $\OP{d}{p}$
to be the completion of $L_d$ in the norm
$a \mapsto \| \rh (a) \|$ for any spatial \rpn{} $\rh$
of $L_d$ on a space of the form $L^p (X, \mu)$
for a \sfm{} $\XBM.$
We write $s_j$ and $t_j$ for the elements in $\OP{d}{p}$
obtained as the images of the elements with the same names
in~$L_d,$
as in Definition~\ref{D:Leavitt}.

We take $\OP{d}{2}$ to be the usual Cuntz algebra~${\mathcal{O}}_d,$
with standard generating isometries $s_1, s_2, \ldots, s_d,$
and with $t_j = s_j^*$ for $j = 1, 2, \ldots, d.$
\end{dfn}

For reference, we restate here Theorem~2.16 of~\cite{Ph5}.
(This theorem is really a compilation of results from other sources.)

\begin{thm}\label{T-LpTP}
Let $\XBM$ and $\YCN$ be \sfm s.
Let $p \in [1, \I).$
Write $L^p (X, \mu) \otimes_p L^p (Y, \nu)$
for the Banach space completed tensor product
$L^p (X, \mu) {\widetilde{\otimes}}_{\Dt_p} L^p (Y, \nu)$
defined in~7.1 of~\cite{DF}.
Then there is a unique isometric isomorphism
\[
L^p (X, \mu) \otimes_p L^p (Y, \nu)
  \cong L^p (X \times Y, \, \mu \times \nu)
\]
which identifies,
for every $\xi \in L^p (X, \mu)$ and $\et \in L^p (Y, \nu),$
the element $\xi \otimes \et$ with the function
$(x, y) \mapsto \xi (x) \et (y)$ on $X \times Y.$
Moreover:
\begin{enumerate}
\item\label{T-LpTP-0}
Under the identification above,
the linear span of all $\xi \otimes \et,$
for $\xi \in L^p (X, \mu)$ and $\et \in L^p (Y, \nu),$
is dense in $ L^p (X \times Y, \, \mu \times \nu).$
\item\label{T-LpTP-1}
$\| \xi \otimes \et \|_p = \| \xi \|_p \| \et \|_p$
for all $\xi \in L^p (X, \mu)$ and $\et \in L^p (Y, \nu).$
\item\label{T-LpTP-2}
The tensor product $\otimes_p$ is commutative and associative.
\item\label{T-LpTP-3a}
Let
\[
(X_1, \cB_1, \mu_1), \,\,\,\,\,\,
(X_2, \cB_2, \mu_2), \,\,\,\,\,\,
(Y_1, \cC_1, \nu_1),
\andeqn
(Y_2, \cC_2, \nu_2)
\]
be \sfm s.
Let
\[
a \in L \big( L^p (X_1, \mu_1), \, L^p (X_2, \mu_2) \big)
\andeqn
b \in L \big( L^p (Y_1, \nu_1), \, L^p (Y_2, \nu_2) \big).
\]
Then there exists a unique
\[
c \in L \big( L^p (X_1 \times Y_1, \, \mu_1 \times \nu_1),
          \, L^p (X_2 \times Y_2, \, \mu_2 \times \nu_2) \big)
\]
such that, making the identification above,
$c (\xi \otimes \et) = a \xi \otimes b \et$
for all $\xi \in L^p (X_1, \mu_1)$ and $\et \in L^p (Y_1, \nu_1).$
We call this operator $a \otimes b.$
\item\label{T-LpTP-3b}
The operator $a \otimes b$ of~(\ref{T-LpTP-3a})
satisfies $\| a \otimes b \| = \| a \| \cdot \| b \|.$
\item\label{T-LpTP-4}
The tensor product of operators defined in~(\ref{T-LpTP-3a})
is associative, bilinear, and satisfies (when the domains
are appropriate)
$(a_1 \otimes b_1) (a_2 \otimes b_2) = a_1 a_2 \otimes b_1 b_2.$
\end{enumerate}
\end{thm}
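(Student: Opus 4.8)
The plan is to reduce the entire statement to one quoted fact --- that the tensor norm $\Dt_p$ of~\cite{DF} is constructed precisely so that, for $\sigma$-finite $\XBM$ and $\YCN$, there is an isometric isomorphism
\[
L^p (X, \mu) {\widetilde{\otimes}}_{\Dt_p} L^p (Y, \nu)
  \cong L^p (X \times Y, \, \mu \times \nu)
\]
carrying an elementary tensor $\xi \otimes \et$ to the function $(x, y) \mapsto \xi (x) \et (y)$ (see~7.1 and the surrounding material of~\cite{DF}) --- together with routine Fubini and density arguments. Uniqueness of an isomorphism with the stated property is automatic once one knows it exists and that the span of the elementary tensors is dense in the target, so it suffices to establish part~(\ref{T-LpTP-0}) and then read off uniqueness.

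First I would prove parts~(\ref{T-LpTP-0}) and~(\ref{T-LpTP-1}) on the $L^p (X \times Y)$ side, after which the displayed isomorphism transports them to the tensor product as stated. For~(\ref{T-LpTP-1}), if $\xi \in L^p (X, \mu)$ and $\et \in L^p (Y, \nu)$ then Tonelli's theorem for the $\sigma$-finite product measure gives
\[
\int_{X \times Y} | \xi (x) \et (y) |^p \, d (\mu \times \nu) (x, y)
  = \Big( \int_X | \xi |^p \, d \mu \Big)
    \Big( \int_Y | \et |^p \, d \nu \Big),
\]
so $\| \xi \otimes \et \|_p = \| \xi \|_p \| \et \|_p.$ For~(\ref{T-LpTP-0}), the measurable rectangles $A \times B$ of finite measure form a semiring generating $\cB \otimes \cC,$ so (here $\sigma$-finiteness of $\mu$ and $\nu$ is used) finite linear combinations of their indicator functions are dense in $L^p (X \times Y, \, \mu \times \nu)$; since $\mathbf{1}_{A \times B}$ corresponds to the elementary tensor $\mathbf{1}_A \otimes \mathbf{1}_B,$ the span of elementary tensors is dense. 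This also supplies the uniqueness in the previous paragraph.

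Next I would treat the tensor product of operators, parts~(\ref{T-LpTP-3a}), (\ref{T-LpTP-3b}), and the bilinearity and composition clauses of~(\ref{T-LpTP-4}). Given $a \in L \big( L^p (X_1, \mu_1), L^p (X_2, \mu_2) \big)$ and $b \in L \big( L^p (Y_1, \nu_1), L^p (Y_2, \nu_2) \big),$ define $c$ on the span of elementary tensors by $c (\xi \otimes \et) = a \xi \otimes b \et.$ To see $c$ is bounded with $\| c \| \le \| a \| \, \| b \|$ I would use the Fubini-type isometry $L^p (X_1 \times Y_1, \, \mu_1 \times \nu_1) \cong L^p \big( Y_1, \nu_1; L^p (X_1, \mu_1) \big)$: on elementary tensors $a \otimes \id$ acts fibrewise as $a$, so $\| (a \otimes \id) f \|_p \le \| a \| \, \| f \|_p$ there, and symmetrically $\| (\id \otimes b) f \|_p \le \| b \| \, \| f \|_p$; since $c = (a \otimes \id)(\id \otimes b)$ on elementary tensors, $c$ extends continuously with $\| c \| \le \| a \| \, \| b \|.$ (Equivalently, this is the metric mapping property of the tensor norm $\Dt_p,$ \cite{DF}.) Uniqueness of $c$ follows from density of elementary tensors, and the reverse estimate is immediate from~(\ref{T-LpTP-1}):
\[
\| c \| \ \ge \ \sup_{\| \xi \|_p = \| \et \|_p = 1} \| a \xi \otimes b \et \|_p
  \ = \ \Big( \sup_{\| \xi \|_p = 1} \| a \xi \|_p \Big)
        \Big( \sup_{\| \et \|_p = 1} \| b \et \|_p \Big)
  \ = \ \| a \| \, \| b \|,
\]
which gives~(\ref{T-LpTP-3b}). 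Bilinearity of $(a, b) \mapsto a \otimes b$ and the identity $(a_1 \otimes b_1)(a_2 \otimes b_2) = a_1 a_2 \otimes b_1 b_2$ hold on elementary tensors by inspection (for the latter, $(a_1 \otimes b_1)(a_2 \xi \otimes b_2 \et) = a_1 a_2 \xi \otimes b_1 b_2 \et$), and both sides are bounded, so they agree everywhere by density.

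Finally, for commutativity and associativity of $\otimes_p$ --- part~(\ref{T-LpTP-2}) for spaces and the associativity clause of~(\ref{T-LpTP-4}) for operators --- I would appeal to the corresponding facts about product measure spaces. The canonical bijections $X \times Y \to Y \times X$ and $(X \times Y) \times Z \to X \times (Y \times Z)$ are measurable and measure-preserving for the $\sigma$-finite product measures, hence induce isometric isomorphisms of the corresponding $L^p$ spaces, and these implement the flip and the associator of $\otimes_p$ because they do so on elementary tensors; the operator identity $(a \otimes b) \otimes c = a \otimes (b \otimes c)$ then follows by the same check-on-elementary-tensors-plus-density argument. The one genuine obstacle is the quoted fact of the first paragraph: one must know that $\Dt_p$ really realizes the completed tensor product isometrically as $L^p (X \times Y, \, \mu \times \nu),$ not merely that it is a cross norm with the correct value on a single elementary tensor --- which is exactly why this theorem is assembled from the literature. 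Once that is granted, everything else is density and Fubini, with $\sigma$-finiteness used precisely to make the rectangle indicators dense and Tonelli's theorem applicable.
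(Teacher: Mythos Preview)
Your proposal is correct, and in fact more detailed than what the paper offers: the paper does not prove this theorem at all, but simply restates it as Theorem~2.16 of~\cite{Ph5}, remarking that it ``is really a compilation of results from other sources.'' Your approach---quoting the isometric identification $L^p(X,\mu)\,{\widetilde{\otimes}}_{\Dt_p}\,L^p(Y,\nu)\cong L^p(X\times Y,\mu\times\nu)$ from~\cite{DF} and then deriving everything else from Tonelli, density of rectangle indicators, and the vector-valued Fubini isometry $L^p(X\times Y)\cong L^p(Y;L^p(X))$---is exactly the sort of compilation the paper alludes to, and your explicit acknowledgment that the one genuine input is the quoted fact about~$\Dt_p$ matches the paper's stance precisely.
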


\begin{dfn}\label{D:LpTensorPrd}
Let $\XBM$ and $\YCN$ be \sfm s,
and let $p \in [1, \I).$
Let $A \subset \LLp$ and $B \subset L (L^p (Y, \nu))$
be closed subalgebras.
We define
$A \otimes_p B
 \subset L \big( L^p (X \times Y, \, \mu \times \nu) \big)$
to be the closed linear span of all $a \otimes b$
with $a \in \LLp$ and $b \in L (L^p (Y, \nu)).$
We call $A \otimes_p B$ the {\emph{$L^p$ operator tensor product}}
of $A$ and~$B.$
We call an element $a \otimes b$
an {\emph{elementary tensor}} of operators.
\end{dfn}

We warn that this use of $\otimes_p$ is quite different from
the tensor product in Theorem~\ref{T-LpTP} which gives
$L^p (X, \mu) \otimes_p L^p (Y, \nu)
   \cong L^p (X \times Y, \, \mu \times \nu).$
No confusion should arise.

When $p = 2$ and $A$ and $B$ are closed under $a \mapsto a^*$
(that is, they are \ca s),
then $A \otimes_p B$ is the minimal C*~tensor product of $A$ and~$B.$
See Section~6.3 of~\cite{Mr},
where it is called the spatial tensor product.
It does not depend on how $A$ and~$B$ are represented.
For general~$p,$
and even for nonselfadjoint algebras when $p = 2,$
our tensor product does depend on how the algebras are represented.
Examples will be given elsewhere.
We thus do not try to define a tensor norm and a completed tensor product
independently of the representations of $A$ and~$B,$
except in very special cases.

We now give some properties of $A \otimes_p B.$
We will not use injectivity in~(\ref{P:TProdOfAlgs-3}),
but it is the sort of thing that one ought to know.
We also mostly do not explicitly refer to the others.

\begin{prp}\label{P:TProdOfAlgs}
Let $p \in [1, \I).$
The $L^p$ operator tensor product of algebras
(\Def{D:LpTensorPrd})
has the following properties:
\begin{enumerate}
\item\label{P:TProdOfAlgs-0}
$A \otimes_p B$ is a Banach algebra.
\item\label{P:TProdOfAlgs-1}
Associativity:
If $\XBM,$ $\YCN,$ and $(Z, {\mathcal{D}}, \ld)$
are \sfm s,
and $A \subset \LLp,$ $B \subset L (L^p (Y, \nu)),$
and $C \subset L (L^p (Z, \ld))$ are closed subalgebras,
then $(A \otimes_p B) \otimes_p C = A \otimes_p (B \otimes_p C)$
as subalgebras of
$L \big( L^p (X \times Y \times Z, \, \mu \times \nu \times \ld) \big).$
\item\label{P:TProdOfAlgs-2}
If $A$ and $B$ are unital,
then $A \otimes_p B$ is unital, with identity $1 \otimes 1,$
and $a \mapsto a \otimes 1$ and $b \mapsto 1 \otimes b$
are unital isometric \hm s from $A$ and $B$ to $A \otimes_p B.$
\item\label{P:TProdOfAlgs-3}
Let $A \otimes_{\mathrm{alg}} B$ be the usual algebraic tensor product.
Then there is a canonical algebra \hm{}
$\ph \colon A \otimes_{\mathrm{alg}} B \to A \otimes_p B$
sending $a \otimes b \in A \otimes_{\mathrm{alg}} B$
to $a \otimes b \in A \otimes_p B$ as defined above,
and this \hm{} is injective.
\end{enumerate}
\end{prp}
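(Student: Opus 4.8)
The plan is to derive all four statements from Theorem~\ref{T-LpTP}, using throughout that, by \Def{D:LpTensorPrd}, $A \otimes_p B$ is a norm-closed subspace of $L \big( L^p (X \times Y, \, \mu \times \nu) \big)$. For part~(\ref{P:TProdOfAlgs-0}) the only thing to check is closure under multiplication. By Theorem~\ref{T-LpTP}(\ref{T-LpTP-4}), $(a_1 \otimes b_1)(a_2 \otimes b_2) = a_1 a_2 \otimes b_1 b_2$, which is again an elementary tensor of operators since $A$ and $B$ are algebras; extending bilinearly, the product of two finite linear combinations of elementary tensors is again such a combination, and joint continuity of multiplication in $L \big( L^p (X \times Y, \, \mu \times \nu) \big)$ together with norm-closedness of $A \otimes_p B$ then shows that the product of any two elements of $A \otimes_p B$ lies in $A \otimes_p B$. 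Submultiplicativity of the norm is inherited from the ambient operator algebra.

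For part~(\ref{P:TProdOfAlgs-1}) I would use Theorem~\ref{T-LpTP}(\ref{T-LpTP-2}) to identify $L^p (X \times Y \times Z, \, \mu \times \nu \times \ld)$ canonically and compatibly with both $\big( L^p (X, \mu) \otimes_p L^p (Y, \nu) \big) \otimes_p L^p (Z, \ld)$ and $L^p (X, \mu) \otimes_p \big( L^p (Y, \nu) \otimes_p L^p (Z, \ld) \big)$, and Theorem~\ref{T-LpTP}(\ref{T-LpTP-4}) to see that $(a \otimes b) \otimes c = a \otimes (b \otimes c)$ for elementary tensors of operators. It then suffices to show that both $(A \otimes_p B) \otimes_p C$ and $A \otimes_p (B \otimes_p C)$ equal the closed linear span $D$ of $\{ a \otimes b \otimes c : a \in A, \ b \in B, \ c \in C \}$: the inclusion $D \S (A \otimes_p B) \otimes_p C$ is immediate, and for the reverse, each $d \in A \otimes_p B$ is a norm-limit of finite sums $\sum_i a_i \otimes b_i$, so, as $e \mapsto e \otimes c$ is isometric by Theorem~\ref{T-LpTP}(\ref{T-LpTP-3b}), the element $d \otimes c$ is a norm-limit of elements of $D$; the argument on the other side is symmetric. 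For part~(\ref{P:TProdOfAlgs-2}), with $A$ and $B$ unital the units are the identity operators $1_{L^p (X, \mu)}$ and $1_{L^p (Y, \nu)}$, so by Theorem~\ref{T-LpTP}(\ref{T-LpTP-3a}) the element $1 \otimes 1$ is the identity operator of $L^p (X \times Y, \, \mu \times \nu)$; it lies in $A \otimes_p B$ and is therefore its identity, while the maps $a \mapsto a \otimes 1$ and $b \mapsto 1 \otimes b$ are homomorphisms by Theorem~\ref{T-LpTP}(\ref{T-LpTP-4}) and isometric by Theorem~\ref{T-LpTP}(\ref{T-LpTP-3b}), using $\| 1_{L^p (X, \mu)} \| = \| 1_{L^p (Y, \nu)} \| = 1$.

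The substantive part is~(\ref{P:TProdOfAlgs-3}), which I would prove by a slice-map argument. Bilinearity of $(a, b) \mapsto a \otimes b$ (Theorem~\ref{T-LpTP}(\ref{T-LpTP-4})) gives a well-defined linear map $\ph \colon A \otimes_{\mathrm{alg}} B \to A \otimes_p B$, and the product formula $(a_1 \otimes b_1)(a_2 \otimes b_2) = a_1 a_2 \otimes b_1 b_2$ makes it an algebra \hm; what needs proof is injectivity. For $\xi^* \in L^p (X, \mu)^*$, consider the slice map $P_{\xi^*} \colon L^p (X \times Y, \, \mu \times \nu) \to L^p (Y, \nu)$ given by $(P_{\xi^*} f)(y) = \int_X \xi^* (x) f(x, y) \, d\mu (x)$; a routine estimate (Hölder's inequality in the $X$-variable, plus Minkowski's integral inequality) shows it is well-defined and bounded, and one has $P_{\xi^*} (\xi \otimes \et) = \langle \xi^*, \xi \rangle \et$ for $\xi \in L^p (X, \mu)$, $\et \in L^p (Y, \nu)$. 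Now let $u \in A \otimes_{\mathrm{alg}} B$ be nonzero; write $u = \sum_{i = 1}^n a_i \otimes b_i$ with $b_1, \ldots, b_n$ linearly independent in $B$, so that some $a_i$ is nonzero, say $a_1 \neq 0$. Choose $\xi \in L^p (X, \mu)$ with $a_1 \xi \neq 0$ and $\xi^* \in L^p (X, \mu)^*$ with $c_1 := \langle \xi^*, a_1 \xi \rangle \neq 0$, and put $c_i = \langle \xi^*, a_i \xi \rangle$; then $(c_1, \ldots, c_n) \neq 0$, so $\sum_{i = 1}^n c_i b_i$ is a nonzero operator on $L^p (Y, \nu)$ by linear independence of $b_1, \ldots, b_n$. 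Picking $\et \in L^p (Y, \nu)$ with $\big( \sum_i c_i b_i \big) \et \neq 0$ and using $\ph (u) (\xi \otimes \et) = \sum_{i = 1}^n a_i \xi \otimes b_i \et$, one obtains
\[
P_{\xi^*} \big( \ph (u) (\xi \otimes \et) \big) = \sum_{i = 1}^n c_i b_i \et \neq 0 ,
\]
whence $\ph (u) \neq 0$. I expect parts~(\ref{P:TProdOfAlgs-0})--(\ref{P:TProdOfAlgs-2}) to be formal once Theorem~\ref{T-LpTP} is in hand (the only mild subtlety being the density argument in part~(\ref{P:TProdOfAlgs-1})); the step needing the most care is part~(\ref{P:TProdOfAlgs-3}), namely checking that the slice maps $P_{\xi^*}$ are genuinely bounded operators on the $L^p$ spaces at hand (the identification $L^p (X, \mu)^* \cong L^\I (X, \mu)$ when $p = 1$ being standard since $\mu$ is $\sigma$-finite), and fixing a representation of $u$ with $b_1, \ldots, b_n$ linearly independent before extracting a nonzero coefficient.
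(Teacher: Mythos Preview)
Your proof is correct and follows essentially the same approach as the paper: parts~(\ref{P:TProdOfAlgs-0})--(\ref{P:TProdOfAlgs-2}) are derived directly from Theorem~\ref{T-LpTP}, and for part~(\ref{P:TProdOfAlgs-3}) both you and the paper write the tensor with one set of factors linearly independent and then use the $L^p$--$L^q$ duality to separate. The only difference is packaging: the paper encodes the duality via the injective maps $a \mapsto \big( (\xi_1,\xi_2) \mapsto \int_X \xi_2 \cdot a(\xi_1) \, d\mu \big)$ from operators to bilinear forms, whereas you use slice maps $P_{\xi^*}$ applied to $\ph(u)(\xi \otimes \et)$, but the underlying mechanism is identical.
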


\begin{proof}
Part~(\ref{P:TProdOfAlgs-0}) follows from the fact that the
linear span of the elementary tensors of operators is
closed under multiplication.
(This uses Theorem~\ref{T-LpTP}(\ref{T-LpTP-4}).)
Part~(\ref{P:TProdOfAlgs-1}) is easily proved by repeated
applications of Theorem~\ref{T-LpTP}(\ref{T-LpTP-0}),
together with the fact that
$(\xi \otimes \et) \otimes \zt = \xi \otimes (\et \otimes \zt).$
Part~(\ref{P:TProdOfAlgs-2}) follows from parts (\ref{T-LpTP-3b}) and~(\ref{T-LpTP-4})
of Theorem~\ref{T-LpTP}.

We prove~(\ref{P:TProdOfAlgs-3}).
Existence of $\ph$ is clear,
as is the fact that $\ph$ is an algebra \hm.

It remains to prove injectivity.

Let $c \in A \otimes_{\mathrm{alg}} B$ satisfy $\ph (c) = 0.$
We prove that $c = 0.$
Choose
\[
m \in \N,
\,\,\,\,\,\,
r_1, r_2, \ldots, r_m \in A,
\andeqn
s_1, s_2, \ldots, s_m \in B,
\]
such that $c = \sum_{j = 1}^m r_j \otimes s_j.$
Let $a_1, a_2, \ldots, a_n \in A$
form a basis for $\spn (r_1, r_2, \ldots, r_m).$
For $j = 1, 2, \ldots, m$ and $k = 1, 2, \ldots, n$
choose $\ld_{j, k} \in \C$
such that $r_j = \sum_{k = 1}^n \ld_{j, k} a_k.$
Then set $b_k = \sum_{j = 1}^m \ld_{j, k} s_j.$
This gives $c = \sum_{k = 1}^n a_k \otimes b_k.$


Let $q \in (1, \I]$ satisfy $\frac{1}{p} + \frac{1}{q} = 1.$
Let $E$ be the vector space of all
bounded bilinear maps $L^p (X, \mu) \times L^q (X, \mu) \to \C,$
and let $F$ be the vector space of all
bounded bilinear maps $L^p (Y, \nu) \times L^q (Y, \nu) \to \C.$
Define $R \colon \LLp \to E$ and $S \colon L (L^p (Y, \nu)) \to F$
by
\[
R (a) (\xi_1, \xi_2)
 = \int_X \xi_2 \cdot a (\xi_1) \, d \mu
\andeqn
S (b) (\et_1, \et_2)
 = \int_Y \et_2 \cdot b (\et_1) \, d \nu
\]
for $a \in \LLp,$
$\xi_1 \in L^p (X, \mu),$
$\xi_2 \in L^q (X, \mu),$
$b \in L (L^p (Y, \nu)),$
$\et_1 \in L^p (Y, \nu),$
and $\et_2 \in L^q (Y, \nu).$
Since $L^q (X, \mu)$ is the dual of $L^p (X, \mu),$
the Hahn-Banach Theorem implies that $R$ is injective.
Similarly $S$ is injective.
Further,
for $\et_1 \in L^p (Y, \nu)$ and $\et_2 \in L^q (Y, \nu),$
define
$T_{\et_1, \et_2} \colon L (L^p (X \times Y, \, \mu \times \nu)) \to E$
by
\[
T_{\et_1, \et_2} (a) (\xi_1, \xi_2)
 = \int_{X \times Y} \xi_2 (x) \et_2 (y) a (\xi_1 \otimes \et_1) (x, y)
   \, d \mu (x) d \nu (y)
\]
for $a \in L (L^p (X \times Y, \, \mu \times \nu)),$
$\xi_1 \in L^p (X, \mu),$
and $\xi_2 \in L^q (X, \mu).$
Using Fubini's Theorem at the second step,
for $\xi_1 \in L^p (X, \mu)$
and $\xi_2 \in L^q (X, \mu),$
we have
\begin{align*}
0 & = T_{\et_1, \et_2} ( \ph (c)) (\xi_1, \xi_2)
    = \sum_{k = 1}^n
       \left( \int_Y \et_2 \cdot b_k (\et_1) \, d \nu \right)
       \left( \int_X \xi_2 \cdot a_k (\xi_1) \, d \mu \right)
     \\
  & = R \left( \sum_{k = 1}^n S (b_k) (\et_1, \et_2) a_k \right)
         (\xi_1, \xi_2).
\end{align*}
Since $R$ is injective and $\xi_1$ and $\xi_2$ are arbitrary,
we get $\sum_{k = 1}^n S (b_k) (\et_1, \et_2) a_k = 0.$
Since $a_1, a_2, \ldots, a_n$ are linearly independent,
it follows that $S (b_k) (\et_1, \et_2) = 0$
for $k = 1, 2, \ldots, n.$
Since $S$ is injective and $\et_1$ and $\et_2$ are arbitrary,
we get $b_k = 0$
for $k = 1, 2, \ldots, n.$
Therefore $c = 0.$
This completes the proof.
\end{proof}

The $p$-tensor product of closed
subalgebras of the bounded operators on $L^p$-spaces is not functorial
for contractive \hm{s}.
Examples will be given elsewhere.
(It probably is true that,
if $A_j \subset L (L^p (X_j, \mu_j))$
and $B_j \subset L (L^p (Y_j, \nu_j)),$
for $j = 1, 2,$
are closed subalgebras,
and if $\ph_j \colon A_j \to B_j,$
for $j = 1, 2,$
is a \hm{} which is completely bounded in a suitable sense,
then there is a completely bounded \hm{}
$\ph \colon A_1 \otimes_p A_2 \to B_1 \otimes_p B_2$
such that $\ph (a_1 \otimes a_2) = \ph_1 (a_1) \otimes \ph_2 (a_2)$
for all $a_1 \in A_1$ and $a_2 \in A_2.$)
We can, however, say something about the tensor product
of spatial representations,
and get some other information.
We give explicit details so that it is clear that
we are not making unjustified assumptions.

\begin{lem}\label{L_2Z29_PermFact}
Let $p \in [1, \I),$
let $n \in \N,$
and for $k = 1, 2, \ldots, n$
let $(X_k, {\mathcal{B}}_k, \mu_k)$
be a \sfm{}
and let $A_k \subset L (L^p (X_k, \mu_k))$
be a closed subalgebra.
Let $\sm$ be a permutation of $\{ 1, 2, \ldots, n \}.$
Then the permutation of tensor factors
\[
a_1 \otimes a_2 \otimes \cdots \otimes a_n \mapsto
  a_{\sm (1)} \otimes a_{\sm (2)} \otimes \cdots \otimes a_{\sm (n)}
\]
extends to an isometric isomorphism
\[
A_1 \otimes A_2 \otimes \cdots \otimes A_n \cong
  A_{\sm (1)} \otimes A_{\sm (2)} \otimes \cdots \otimes A_{\sm (n)}.
\]
\end{lem}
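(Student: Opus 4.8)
The plan is to reduce to the case of a transposition of two adjacent tensor factors, and then to the two-factor case $A_i \otimes_p A_j$, since the symmetric group is generated by adjacent transpositions and, using the associativity statement of Proposition~\ref{P:TProdOfAlgs}(\ref{P:TProdOfAlgs-1}), any adjacent transposition of $A_1 \otimes A_2 \otimes \cdots \otimes A_n$ only affects two consecutive factors while the rest are carried along unchanged. So the heart of the matter is: given closed subalgebras $A \subset \LLp$ and $B \subset L (L^p (Y, \nu))$, the flip $a \otimes b \mapsto b \otimes a$ extends to an isometric isomorphism $A \otimes_p B \cong B \otimes_p A$.

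First I would produce the underlying spatial map on the $L^p$~spaces. By commutativity of the $L^p$~tensor product of spaces (Theorem~\ref{T-LpTP}(\ref{T-LpTP-2})), there is a canonical isometric isomorphism $u \colon L^p (X \times Y, \, \mu \times \nu) \to L^p (Y \times X, \, \nu \times \mu)$ identifying $\xi \otimes \et$ with $\et \otimes \xi$; concretely it is induced by the measure-space isomorphism $(x, y) \mapsto (y, x)$. Conjugation by $u$ gives an isometric isomorphism $\Phi \colon L (L^p (X \times Y, \, \mu \times \nu)) \to L (L^p (Y \times X, \, \nu \times \mu))$, $\Phi(c) = u c u^{-1}$, which is automatically a (unital, if applicable) Banach algebra isomorphism and is isometric because $u$ is. Next I would check that $\Phi$ does the right thing on elementary tensors of operators: for $a \in \LLp$ and $b \in L (L^p (Y, \nu))$, using Theorem~\ref{T-LpTP}(\ref{T-LpTP-3a}) to characterize $a \otimes b$ by its action on $\xi \otimes \et$, one computes $u (a \otimes b) u^{-1} (\et \otimes \xi) = u (a \otimes b)(\xi \otimes \et) = u (a\xi \otimes b\et) = b\et \otimes a\xi$, which by the same uniqueness statement identifies $\Phi(a \otimes b)$ with $b \otimes a \in L (L^p (Y \times X, \, \nu \times \mu))$.

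It then follows that $\Phi$ carries the linear span of the $a \otimes b$ onto the linear span of the $b \otimes a$, and, being isometric, carries its closure $A \otimes_p B$ onto the closure $B \otimes_p A$; thus $\Phi$ restricts to the desired isometric isomorphism $A \otimes_p B \cong B \otimes_p A$. For the general $n$-factor case, writing $\sm$ as a product of adjacent transpositions, each adjacent transposition is handled by the two-factor case applied to the relevant pair of factors (with the remaining factors grouped off via associativity and carried along by the identity), and the composite of the resulting isometric isomorphisms gives the map for $\sm$; since it sends $a_1 \otimes \cdots \otimes a_n$ to $a_{\sm(1)} \otimes \cdots \otimes a_{\sm(n)}$ on elementary tensors, it is the asserted extension. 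One should note that the resulting isomorphism does not depend on the chosen factorization of $\sm$, which is immediate since any two such composites agree on the dense span of elementary tensors of operators and are continuous.

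The main obstacle is really just bookkeeping: one has to be careful that the ``permutation of tensor factors'' is genuinely realized by conjugation by a single spatial isometry of the underlying $L^p$~space, and that the identifications of $L (L^p (X \times Y, \, \mu \times \nu))$ with operators on the iterated tensor product $L^p (X, \mu) \otimes_p L^p (Y, \nu)$ are the ones given by Theorem~\ref{T-LpTP}, so that no hidden choice of representation enters. Once the $n = 2$ case is pinned down via the explicit $u$, the reduction to adjacent transpositions and the passage to $n$ factors are routine, using only Proposition~\ref{P:TProdOfAlgs} and Theorem~\ref{T-LpTP}; there is no genuine analytic difficulty, since the key map is an honest isometry and everything is continuity-plus-density.
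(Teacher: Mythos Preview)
Your proof is correct and rests on the same core idea as the paper's: conjugation by a spatial isometry of the underlying $L^p$~space coming from a measure-preserving permutation of coordinates. The only difference is in the packaging. The paper handles a general permutation~$\sm$ in one stroke by writing down the single measure-preserving bijection $h \colon \prod_k X_k \to \prod_k X_{\sm(k)}$, $h(x_1,\ldots,x_n) = (x_{\sm(1)},\ldots,x_{\sm(n)})$, and taking $a \mapsto u a u^{-1}$ where $u$ is the induced isometry on $L^p$; this avoids any reduction step. Your route via adjacent transpositions and composition yields the same map (as you note, the composites agree on elementary tensors), but introduces extra bookkeeping---factoring~$\sm$, invoking associativity to isolate two factors, tensoring the flip with identities on the outside, and checking independence of the factorization---none of which is needed if one permutes all coordinates at once. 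Both arguments are short, but the paper's is shorter.
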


\begin{proof}
Set $Y = \prod_{k = 1}^n X_k$ and $Z = \prod_{k = 1}^n X_{\sm (k)},$
equipped with the obvious product measures $\nu$ and~$\ld.$
Let
$h \colon Y \to Z$
be
\[
h (x_1, x_2, \ldots, x_n)
  = (x_{\sm (1)}, x_{\sm (2)}, \ldots, x_{\sm (n)} )
\]
for $(x_1, x_2, \ldots, x_n) \in Y.$
Then $h$ is a measure preserving bijection,
so composition with $h$ defines an isometric isomorphism
$u \in L \big( L^p (Z, \ld), \, L^p (Y, \nu) \big).$
The required isomorphism is $a \mapsto u a u^{-1}.$
\end{proof}

\begin{lem}\label{L_2Y21MpTRep}
Let $p \in [1, \I) \setminus \{ 2 \},$
let $m, n \in \N,$
let $\XBM$ and $\YCN$ be \sfm{s},
and let $\rh \colon M_m \to \LLp$
and $\sm \colon M_n \to L (L^p (Y, \nu))$
be spatial \rpn{s}.
Identify $\C^m \otimes \C^n$ with $\C^{m n}$
via an isomorphism which sends tensor products of standard basis vectors
to standard basis vectors,
and use this isomorphism to identify
$M_m \otimes M_n = L (\C^m \otimes \C^n)$
with~$M_{m n} = L (\C^{m n}).$
Define a \rpn{}
\[
\rh \otimes \sm \colon M_{m n}
  \to L \big( L^p (X \times Y, \, \mu \times \nu) \big)
\]
by $(\rh \otimes \sm) (a \otimes b) = \rh (a) \otimes \sm (b)$
for $a \in M_m$ and $b \in M_n.$
Then $\rh \otimes \sm$ is spatial.
\end{lem}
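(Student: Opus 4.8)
The plan is to show that, after conjugating the underlying $L^p$~spaces by isometric isomorphisms, $\rh \otimes \sm$ becomes a standard representation of $M_{mn}$ of the form $c \mapsto c \otimes \id$, which is contractive by Theorem~\ref{T-LpTP}(\ref{T-LpTP-3b}), hence spatial in the sense of Definition~\ref{D_2Y14_SpMd}. First I would check that $\rh \otimes \sm$ is a well defined unital \rpn: under the algebraic identification $M_m \otimes_{\mathrm{alg}} M_n = M_{mn}$, the formula $(\rh \otimes \sm)(a \otimes b) = \rh (a) \otimes \sm (b)$ extends linearly, and the result is multiplicative and unital by parts~(\ref{T-LpTP-3b}) and~(\ref{T-LpTP-4}) of Theorem~\ref{T-LpTP} (or by Proposition~\ref{P:TProdOfAlgs}). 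I would also record the elementary observation that if a representation $\pi$ of $M_d$ on some $L^p (Z, \ld)$ is conjugated by an isometric isomorphism $w$ of $L^p (Z, \ld)$ onto another $L^p$~space, then $\| w \pi (a) w^{-1} \| = \| \pi (a) \|$ for all $a$, so the conjugated representation is spatial \ifo{} $\pi$ is.

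The key input is the structure theory of spatial representations of matrix algebras from Section~7 of~\cite{Ph5}: since $\rh$ is spatial, there exist a \sfm{} $(X_0, \mu_0)$ and an isometric isomorphism $w \colon L^p (X, \mu) \to l^p_m \otimes_p L^p (X_0, \mu_0)$ with $w \rh (a) w^{-1} = a \otimes \id_{L^p (X_0, \mu_0)}$ for all $a \in M_m$; likewise there exist a \sfm{} $(Y_0, \nu_0)$ and an isometric isomorphism $v \colon L^p (Y, \nu) \to l^p_n \otimes_p L^p (Y_0, \nu_0)$ with $v \sm (b) v^{-1} = b \otimes \id_{L^p (Y_0, \nu_0)}$ for all $b \in M_n.$ If the form of this that is convenient to cite is instead phrased in terms of spatial systems of matrix units, the alternative is to check directly that $\rh (e_{j k}) \otimes \sm (e_{l r})$ is again a spatial system of matrix units in $L (L^p (X \times Y, \, \mu \times \nu))$ (tensor products of spatial idempotents, and of spatial partial isometries, are again of those types) and then invoke Theorem~7.2 of~\cite{Ph5}.

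Now I would conjugate $\rh \otimes \sm$ by $w \otimes v$, which by parts~(\ref{T-LpTP-3b}) and~(\ref{T-LpTP-4}) of Theorem~\ref{T-LpTP} is an isometric isomorphism of $L^p (X \times Y, \, \mu \times \nu)$ onto $\big( l^p_m \otimes_p L^p (X_0, \mu_0) \big) \otimes_p \big( l^p_n \otimes_p L^p (Y_0, \nu_0) \big)$, carrying $\rh (a) \otimes \sm (b)$ to $\big( a \otimes \id_{L^p (X_0, \mu_0)} \big) \otimes \big( b \otimes \id_{L^p (Y_0, \nu_0)} \big).$ Using commutativity and associativity of $\otimes_p$ (Theorem~\ref{T-LpTP}(\ref{T-LpTP-2}); cf.\ also Proposition~\ref{P:TProdOfAlgs}(\ref{P:TProdOfAlgs-1}) and Lemma~\ref{L_2Z29_PermFact}) to interchange the second and third tensor factors, I would identify this space isometrically with $\big( l^p_m \otimes_p l^p_n \big) \otimes_p \big( L^p (X_0, \mu_0) \otimes_p L^p (Y_0, \nu_0) \big) \cong l^p_{mn} \otimes_p L^p (X_0 \times Y_0, \, \mu_0 \times \nu_0)$, the first factor being identified via the chosen isomorphism $\C^m \otimes \C^n = \C^{mn}$ (which is isometric for the relevant norms by the counting measure case of Theorem~\ref{T-LpTP}). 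Under the resulting composite isometric isomorphism, $\rh \otimes \sm$ becomes the representation $c \mapsto c \otimes \id_{L^p (X_0 \times Y_0, \, \mu_0 \times \nu_0)}$ of $M_{mn}$, and this is contractive because $\| c \otimes \id \| = \| c \|$ by Theorem~\ref{T-LpTP}(\ref{T-LpTP-3b}). Hence $\rh \otimes \sm$ is contractive, i.e.\ spatial; in fact it is isometric, so alternatively one could finish via Theorem~\ref{T_2Y17Iso}.

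The step I expect to require the most care is locating and invoking the structure theorem for spatial representations of $M_d$ from~\cite{Ph5} in exactly the ``conjugate to $a \mapsto a \otimes \id$'' form used above; everything after that is routine bookkeeping with Theorem~\ref{T-LpTP} and Lemma~\ref{L_2Z29_PermFact}.
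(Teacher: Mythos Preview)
Your argument is correct. The paper's proof is a one-line citation: it appeals directly to the original definition of a spatial representation (Definition~7.1 of~\cite{Ph5}, in terms of spatial systems of matrix units) together with Lemma~6.20 of~\cite{Ph5}, which says that tensor products of spatial partial isometries are again spatial partial isometries. In other words, the paper takes exactly the alternative route you sketch in your second paragraph, rather than your main structure-theorem argument via condition~(8) of Theorem~7.2 of~\cite{Ph5}. Your main route is a bit longer but more self-contained at the level of this paper (it reduces everything to Theorem~\ref{T-LpTP} once the structure theorem is granted), while the paper's route is shorter because the closure-under-tensor-product fact has already been isolated as Lemma~6.20 in~\cite{Ph5}. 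Either way, the content is the same, and you correctly identified the one nontrivial external input.
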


\begin{proof}
This is immediate from the original definition
of a spatial representation
(Definition~7.1 of~\cite{Ph5})
and Lemma 6.20 of~\cite{Ph5}.
\end{proof}

\begin{cor}\label{C_2Y21MpTens}
Let $p \in [1, \I)$
and let $m, n \in \N.$
Then the identification of $M_m \otimes M_n$ with $M_{m n}$
used in Lemma~\ref{L_2Y21MpTRep}
induces an isometric isomorphism
$\MP{m}{p} \otimes_p \MP{n}{p} \cong \MP{m n}{p}.$
Moreover,
if $\XBM$ and $\YCN$ are \sfm{s},
and $\rh \colon \MP{m}{p} \to \LLp$
and $\sm \colon \MP{n}{p} \to L (L^p (Y, \nu))$
are isometric unital \rpn{s},
then the \rpn{} $\rh \otimes \sm$
of Lemma~\ref{L_2Y21MpTRep} is isometric.
\end{cor}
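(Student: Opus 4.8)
The plan is to prove the two assertions in turn, splitting the second one into the cases $p \neq 2$ and $p = 2.$

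First I would establish the isometric isomorphism $\MP{m}{p} \otimes_p \MP{n}{p} \cong \MP{mn}{p}.$ Write $Z = \{ 1, 2, \ldots, m \} \times \{ 1, 2, \ldots, n \}$ with the product of the counting measures, so that by definition $\MP{m}{p} \otimes_p \MP{n}{p}$ is a closed subalgebra of $L (L^p (Z)).$ The bijection $Z \to \{ 1, 2, \ldots, m n \}$ determined by the isomorphism $\C^m \otimes \C^n \cong \C^{m n}$ of \Lem{L_2Y21MpTRep} (the one taking $e_j \otimes e_k$ to a standard basis vector) is measure preserving, so together with Theorem~\ref{T-LpTP} it gives an isometric isomorphism $L^p (\{ 1, \ldots, m \}) \otimes_p L^p (\{ 1, \ldots, n \}) \cong l_{m n}^p,$ hence by conjugation an isometric algebra isomorphism $\Psi \colon L (L^p (Z)) \to \MP{mn}{p}$ which carries each operator $a \otimes b$ to the Kronecker product of $a$ and $b,$ that is, to $a \otimes b$ under the identification $M_m \otimes M_n = M_{mn}$ of \Lem{L_2Y21MpTRep}. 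It then remains only to observe that $\MP{m}{p} \otimes_p \MP{n}{p}$ is all of $L (L^p (Z)),$ which is a dimension count: by Proposition~\ref{P:TProdOfAlgs}(\ref{P:TProdOfAlgs-3}) the algebra spanned by the elementary tensors of operators has dimension $\dim (M_m \otimes M_n) = (m n)^2 = \dim L (l_{m n}^p),$ so, being finite dimensional, it is closed and fills up $L (L^p (Z)).$ Thus $\Psi$ is the desired isometric isomorphism.

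For the second assertion when $p \neq 2,$ I would argue as follows. Being isometric, $\rh$ and $\sm$ are in particular contractive for the norm of Notation~\ref{N:FDP}, hence spatial by \Def{D_2Y14_SpMd}. So \Lem{L_2Y21MpTRep} applies and shows that $\rh \otimes \sm$ is a spatial representation of $M_{m n};$ by Theorem~\ref{T_2Y17Iso} it is therefore isometric for the norm of Notation~\ref{N:FDP}. By the first assertion this norm on $M_{m n}$ coincides with the norm of $\MP{m}{p} \otimes_p \MP{n}{p},$ so $\rh \otimes \sm$ is isometric, as wanted.

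For the remaining case $p = 2$ the algebras $\MP{m}{2} = M_m,$ $\MP{n}{2} = M_n,$ and $\MP{m n}{2} = M_{m n}$ are \ca{s}. An isometric unital \hm{} of \ca{s} is contractive, hence a $*$-\hm{} by \Lem{L_3717_CSt}; so $\rh$ and $\sm$ are unital $*$-\hm{s}. Since the ranges of $a \mapsto \rh (a) \otimes 1$ and $b \mapsto 1 \otimes \sm (b)$ commute, $\rh \otimes \sm$ is a well defined unital $*$-\hm{} from $M_{m n} = M_m \otimes M_n$ into $L (L^2 (X \times Y)),$ and, being nonzero with $M_{m n}$ simple, it is injective, hence isometric. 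This completes the second assertion. The step I expect to be the main obstacle is the bookkeeping in the first assertion: checking that the $L^p$~tensor product of Theorem~\ref{T-LpTP}, the reduction of a product of finite counting-measure spaces to $l_{m n}^p,$ and the Kronecker-product description of $a \otimes b$ are mutually compatible, and, above all, that $\MP{m}{p} \otimes_p \MP{n}{p}$ genuinely exhausts $L (L^p (Z))$ — for which the injectivity in Proposition~\ref{P:TProdOfAlgs}(\ref{P:TProdOfAlgs-3}), through the dimension count, is exactly what is needed. Granting the first assertion, both cases of the second are essentially immediate: from Theorem~\ref{T_2Y17Iso} together with \Lem{L_2Y21MpTRep} when $p \neq 2,$ and from \Lem{L_3717_CSt} together with simplicity of $M_{m n}$ when $p = 2.$
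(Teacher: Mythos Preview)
Your proof is correct and follows essentially the same approach as the paper: for the second assertion you invoke exactly the same ingredients (Lemma~\ref{L_2Y21MpTRep} plus Theorem~\ref{T_2Y17Iso} for $p \neq 2,$ and Lemma~\ref{L_3717_CSt} plus simplicity of $M_{mn}$ for $p = 2$). Your treatment of the first assertion via the explicit measure-preserving bijection and dimension count is slightly more direct than the paper's (which folds the first assertion into the same spatial/C*-representation argument used for the second), but this is a minor presentational difference rather than a genuinely different route.
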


\begin{proof}
For $p \neq 2,$
this is immediate from Lemma~\ref{L_2Y21MpTRep}
and the fact (Theorem~\ref{T_2Y17Iso})
that a \rpn{} of $M_d$ is spatial \ifo{}
it is isometric on $\MP{d}{p}.$
For $p = 2,$
use Lemma~\ref{L_3717_CSt}.
\end{proof}

\begin{lem}\label{L_2Z29_Lower}
Let $p \in [1, \I),$
let $m, n \in \N,$
let $\XBM$ and $\YCN$ be \sfm{s},
and let $\rh \colon M_m \to \LLp$
and $\sm \colon M_n \to L (L^p (Y, \nu))$
be \rpn{s} (not necessarily spatial).
Identify $\MP{m}{p} \otimes_p \MP{n}{p}$ with $\MP{m n}{p}$
as in Corollary~\ref{C_2Y21MpTens},
and define a \rpn{}
\[
\rh \otimes_p \sm \colon \MP{m n}{p}
  \to L \big( L^p (X \times Y, \, \mu \times \nu) \big)
\]
by $(\rh \otimes_p \sm) (a \otimes b) = \rh (a) \otimes \sm (b)$
for $a \in M_m$ and $b \in M_n.$
Then $\| \rh \otimes_p \sm \| \geq \| \rh \| \cdot \| \sm \|.$
\end{lem}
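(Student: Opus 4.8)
The plan is to probe $\rho \otimes_p \sigma$ on a single elementary tensor $a \otimes b$, chosen so that $\rho$ and $\sigma$ each attain their norms at $a$ and $b$ respectively, and then to invoke the multiplicativity of the norm for elementary tensors both on the domain side and on the range side. Since $M_m$ and $M_n$ are finite dimensional, the suprema defining $\|\rho\|$ and $\|\sigma\|$ are attained, so there are nonzero $a \in M_m$ and $b \in M_n$ with $\|\rho (a)\| = \|\rho\| \cdot \|a\|$ and $\|\sigma (b)\| = \|\sigma\| \cdot \|b\|$, where $\|a\|$ is the norm in $\MP{m}{p}$ and $\|b\|$ is the norm in $\MP{n}{p}$. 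Rescaling, we may assume $\|a\| = \|b\| = 1$. Then $a \otimes b$ is a nonzero element of $M_m \otimes M_n = M_{mn} = \MP{mn}{p}$ (as an algebra), so it is a legitimate test element for the operator norm of $\rho \otimes_p \sigma \colon \MP{mn}{p} \to L \big( L^p (X \times Y, \, \mu \times \nu) \big)$.

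Next I would pin down the two relevant norms. On the domain side, viewing $a \otimes b$ as an operator on $l_m^p \otimes_p l_n^p$ and using the isometric identification $l_m^p \otimes_p l_n^p \cong l_{mn}^p$ supplied by Theorem~\ref{T-LpTP} (which, under the basis identification of Lemma~\ref{L_2Y21MpTRep}, is exactly the identification $\MP{m}{p} \otimes_p \MP{n}{p} \cong \MP{mn}{p}$ of Corollary~\ref{C_2Y21MpTens}), Theorem~\ref{T-LpTP}(\ref{T-LpTP-3b}) gives $\|a \otimes b\|_{\MP{mn}{p}} = \|a\| \cdot \|b\| = 1$. On the range side, $(\rho \otimes_p \sigma)(a \otimes b) = \rho (a) \otimes \sigma (b)$ is precisely the operator on $L^p (X \times Y, \, \mu \times \nu) \cong L^p (X, \mu) \otimes_p L^p (Y, \nu)$ of Theorem~\ref{T-LpTP}(\ref{T-LpTP-3a}) determined by $\xi \otimes \et \mapsto \rho (a) \xi \otimes \sigma (b) \et$, so Theorem~\ref{T-LpTP}(\ref{T-LpTP-3b}) applies again and yields $\|\rho (a) \otimes \sigma (b)\| = \|\rho (a)\| \cdot \|\sigma (b)\|$.

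Combining these,
\[
\|(\rho \otimes_p \sigma)(a \otimes b)\|
 = \|\rho (a)\| \cdot \|\sigma (b)\|
 = \|\rho\| \cdot \|\sigma\| \cdot \|a\| \cdot \|b\|
 = \|\rho\| \cdot \|\sigma\| \cdot \|a \otimes b\|,
\]
and since $a \otimes b \neq 0$ this forces $\|\rho \otimes_p \sigma\| \geq \|\rho\| \cdot \|\sigma\|$, as required.

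I do not expect a genuine obstacle here; the only points needing care are bookkeeping ones, namely making sure that the basis identification used to write $M_m \otimes M_n$ as $M_{mn}$ in Lemma~\ref{L_2Y21MpTRep} is the same one under which $l_m^p \otimes_p l_n^p$ becomes isometrically $l_{mn}^p$, so that the domain norm really is multiplicative on $a \otimes b$, and verifying that $(\rho \otimes_p \sigma)(a \otimes b)$ is literally the tensor-product operator $\rho (a) \otimes \sigma (b)$ of Theorem~\ref{T-LpTP}(\ref{T-LpTP-3a}), so that part~(\ref{T-LpTP-3b}) of that theorem governs the range. Both are already arranged by the setup in Lemma~\ref{L_2Y21MpTRep} and Corollary~\ref{C_2Y21MpTens}. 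Finite dimensionality of $M_m$ and $M_n$ is used only to attain the norms exactly; if one preferred to avoid it, one would pick $a$ and $b$ achieving the norms to within $\ep$ and let $\ep \to 0$.
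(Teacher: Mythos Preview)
Your proof is correct and follows essentially the same approach as the paper: probe $\rho \otimes_p \sigma$ on an elementary tensor $a \otimes b$ and apply Theorem~\ref{T-LpTP}(\ref{T-LpTP-3b}) on both the domain and range sides. The only cosmetic difference is that the paper uses an $\ep$-approximation instead of invoking norm attainment via finite-dimensionality, which you yourself note as an alternative.
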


The \hm{} $\rh \otimes_p \sm$ is defined on all of
$\MP{m}{p} \otimes_p \MP{n}{p},$
and bounded,
because $\MP{m}{p} \otimes_p \MP{n}{p}$ is \fd.
Allowing for the possibility that the tensor
product of the \hm{s} is unbounded
and defined only on the algebraic tensor product,
an analogous statement is true for more general domains.

We suspect that the inequality can't be replaced
by equality, but we don't have an example.

\begin{proof}[Proof of Lemma~\ref{L_2Z29_Lower}]
Let $\ep > 0.$
Choose $a \in \MP{m}{p}$ and $b \in \MP{n}{p}$
such that
\[
\| a \| = \| b \| = 1,
\,\,\,\,\,\,
\| \rh (a) \| > \| \rh \| - \ep,
\andeqn
\| \sm (b) \| > \| \sm \| - \ep.
\]
Then $\| a \otimes b \| = 1$ by Theorem~\ref{T-LpTP}(\ref{T-LpTP-3b}),
and
\[
\| (\rh \otimes_p \sm) (a \otimes b) \|
 > (\| \rh \| - \ep) (\| \sm \| - \ep),
\]
again by Theorem~\ref{T-LpTP}(\ref{T-LpTP-3b}).
Since $\ep > 0$ is arbitrary,
the result follows.
\end{proof}

\section{Conditional expectations and group actions}\label{Sec:CondExpt}

\indent
Our simplicity proofs depend heavily on the Banach algebra version
of a conditional expectation.
In the proof of simplicity of~$\OP{d}{p},$
the gauge action will also play a key role.
In this section,
we develop the properties of conditional expectations
and group actions which we use.

We only consider conditional expectations of norm one.
Doubtless the others are interesting and important for some purposes,
but the applications in this paper require the norm one condition.

\begin{dfn}\label{D:CondExpt}
Let $B$ be a unital Banach algebra,
and let $A \subset B$ be a closed subalgebra such that $1 \in A.$
A {\emph{Banach conditional expectation}} from $B$ to $A$
is a map $E \colon B \to A$ satisfying the following properties:
\begin{enumerate}
\item\label{D:CondExpt-1}
$E$ is linear.
\item\label{D:CondExpt-2}
$\| E \| = 1.$
\item\label{D:CondExpt-3}
$E (a) = a$ for all $a \in A.$
\item\label{D:CondExpt-4}
For all $a, c \in A$ and $b \in B,$
we have $E (a b c) = a E (b) c.$
\end{enumerate}
\end{dfn}

Examples are most easily obtained by averaging over group actions.

\begin{dfn}\label{D:Aut}
Let $B$ be a Banach algebra.
An {\emph{automorphism}} of~$B$
is a \ct{} linear bijection $\ph \colon B \to B$
such that $\ph (a b) = \ph (a) \ph (b)$ for all $a, b \in B.$
(Continuity of $\ph^{-1}$ is automatic,
by the Open Mapping Theorem.)
We call $\ph$ an {\emph{isometric automorphism}}
if in addition $\| \ph (a) \| = \| a \|$ for all $a \in B.$
We let $\Aut (B)$ denote the set of automorphisms of~$B.$
\end{dfn}

\begin{dfn}\label{D:Action}
Let $B$ be a Banach algebra,
and let $G$ be a topological group.
An {\emph{action of $G$ on~$B$}}
is a \hm{} $g \mapsto \af_g$ from $G$ to $\Aut (B)$
such that for every $a \in B,$
the map $g \mapsto \af_g (a)$ is \ct{} from $G$ to~$B.$
We say that the action is {\emph{isometric}} if each $\af_g$ is.
\end{dfn}

\begin{prp}\label{P:Avg}
Let $B$ be a unital Banach algebra,
let $G$ be a compact group with normalized Haar measure~$\nu,$
and let $\af \colon G \to \Aut (B)$ be an isometric action.
Let $B^G$ be the fixed point algebra,
\[
B^G = \big\{ a \in B \colon
          {\mbox{$\af_g (a) = a$ for all $g \in G$}} \big\}.
\]
Take Banach space valued integration as in Section~B.1
in Appendix~B of~\cite{Wl}.
Then the formula
\[
E (a) = \int_G \af_g (a) \, d \nu (g)
\]
defines a Banach conditional expectation from $B$ to~$B^G.$
\end{prp}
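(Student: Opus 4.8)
The plan is to verify the four defining properties of a Banach conditional expectation (Definition~\ref{D:CondExpt}) directly for the averaging map $E(a) = \int_G \af_g(a)\, d\nu(g)$, using standard properties of the vector-valued integral.

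First I would note that $E$ is well-defined: for each $a \in B$, the map $g \mapsto \af_g(a)$ is continuous from the compact group $G$ to the Banach space $B$ (by the definition of an action), hence bounded, hence Bochner integrable against the finite measure $\nu$. Linearity of $E$ (property~(\ref{D:CondExpt-1})) is immediate from linearity of each $\af_g$ and linearity of the integral. For property~(\ref{D:CondExpt-3}): if $a \in B^G$ then $\af_g(a) = a$ for all $g$, so $E(a) = \int_G a\, d\nu(g) = \nu(G) a = a$ since $\nu$ is normalized. In particular $E(1) = 1$. For the norm bound~(\ref{D:CondExpt-2}): the standard estimate $\|\int_G \af_g(a)\, d\nu(g)\| \le \int_G \|\af_g(a)\|\, d\nu(g) = \int_G \|a\|\, d\nu(g) = \|a\|$ (using that the action is isometric) gives $\|E\| \le 1$, and evaluating at $a = 1$ gives $E(1) = 1$, so $\|E\| = 1$.

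Next I would check that $E$ actually maps into $B^G$. Fix $h \in G$. Since $\af_h$ is a continuous (indeed isometric) linear map, it commutes with the vector-valued integral, so $\af_h(E(a)) = \int_G \af_h(\af_g(a))\, d\nu(g) = \int_G \af_{hg}(a)\, d\nu(g)$, and by left-invariance of the normalized Haar measure $\nu$ this equals $\int_G \af_g(a)\, d\nu(g) = E(a)$. Hence $E(a) \in B^G$ for every $a$, and combined with property~(\ref{D:CondExpt-3}) this shows $E$ restricted to $B^G$ is the identity, so $B^G$ is closed (it is $\{a : E(a) = a\}$, and it clearly contains $1$).

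Finally, property~(\ref{D:CondExpt-4}), the bimodule property: for $a, c \in B^G$ and $b \in B$, I would use that left and right multiplication by fixed elements are bounded linear operators on $B$, hence commute with the integral, together with the fact that each $\af_g$ is an algebra homomorphism fixing $a$ and $c$. Thus
\[
E(abc) = \int_G \af_g(abc)\, d\nu(g)
       = \int_G \af_g(a)\af_g(b)\af_g(c)\, d\nu(g)
       = \int_G a\, \af_g(b)\, c\, d\nu(g)
       = a \left( \int_G \af_g(b)\, d\nu(g) \right) c
       = a E(b) c.
\]
This completes all four properties. I do not anticipate a serious obstacle here; the only points requiring a little care are the interchange of the integral with bounded linear maps (left/right multiplication and the automorphisms $\af_g$) and the use of left-invariance of Haar measure, both of which are standard facts about Banach-space-valued integration as set up in the reference~\cite{Wl}. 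The hypothesis that the action is isometric is what pins down $\|E\| = 1$ rather than merely $\|E\| < \infty$.
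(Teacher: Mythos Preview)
Your proof is correct and takes essentially the same approach as the paper, which simply states that ``all the properties required of a Banach conditional expectation in Definition~\ref{D:CondExpt} are easily checked.'' You have supplied exactly those routine checks (well-definedness, linearity, the norm estimate using the isometric hypothesis, the range lying in $B^G$ via left-invariance of Haar measure, and the bimodule property), so there is nothing to add.
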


Since we are integrating continuous functions
with respect to Haar measure,
the presentation of vector valued integration
in Section~1.5 of~\cite{Wl} suffices.

\begin{proof}[Proof of Proposition~\ref{P:Avg}]
All the properties required of a Banach conditional expectation in
\Def{D:CondExpt}
are easily checked.
\end{proof}

\begin{rmk}\label{R_2Y16_AvgBddAct}
In Proposition~\ref{P:Avg},
suppose we drop the assumption that the action is isometric.
Since $G$ is compact,
we have $\sup_{g \in G} \| \af_g (a) \| < \I$ for all $a \in A.$
The Uniform Boundedness Principle therefore implies that
$\sup_{g \in G} \| \af_g \| < \I.$
The same proof then shows that $E$ satisfies the conditions
of \Def{D:CondExpt},
except that condition~(\ref{D:CondExpt-2})
must be replaced by
$\| E \| \leq \sup_{g \in G} \| \af_g \| < \I.$
\end{rmk}

\begin{exa}\label{E_2Y16_HmIsE}
Let $A$ be a commutative unital Banach algebra,
let $X$ be its maximal ideal space,
and let $\mu$ be a probability measure on~$X.$
Define $E \colon A \to \C \cdot 1 \subset A$
by $E (a) = \left( \int_X \om (a) \, d \mu (\om) \right) \cdot 1$
for $a \in A.$
Then $E$ is a Banach conditional expectation.
For the norm estimate,
one uses $\| \om \| \leq 1$ for all $\om \in X.$

In particular, if $\om \colon A \to \C$ is a multiplicative
linear functional,
then $a \mapsto \om (a) \cdot 1$
is a Banach conditional expectation.
\end{exa}

Our next example is the conditional expectation
from matrices over $A$ to $A$ itself coming from
the usual trace on a matrix algebra.
We need some preparatory work,
enabling us to show that the condition $\| E \| \leq 1$
is satisfied for many choices of the norm on matrices over~$A,$
and giving an important additional property.

Unless otherwise specified,
the norm on $M_d$ itself will be as given
for $\MP{d}{p}$ in Notation~\ref{N:FDP};
other norms will be given in terms of the norm on
the image of $M_d$ under a \rpn.

Definition~\ref{D_2Y15_EnId}(\ref{D_2Y15_EnId_Compat}) below
is a normalization condition.
It is included for use in~\cite{Ph-Lp2b},
and will not be needed in this paper.

\begin{dfn}\label{D_2Y15_EnId}
Let $\XBM$ be a \sfm,
let $p \in [1, \I),$
and let $d \in \N.$
Let $\rh \colon M_d \to \LLp$
be a unital representation.
\begin{enumerate}
\item\label{D_2Y15_EnId_EI}
We say that
{\emph{$\rh$~has enough isometries}}
if there is a finite subgroup $G \subset \inv (M_d)$
whose natural action on $\C^d$ is irreducible
and such that $\| \rh (g) \| = 1$ for all $g \in G.$
\item\label{D_2Y15_EnId_Loc}
We say that
{\emph{$\rh$~locally has enough isometries}}
if there is an at most countable partition
$X = \coprod_{i \in I} X_{i}$
into measurable subsets $X_{i}$ such that $\mu (X_{i}) > 0$
for $i \in I,$
such that the subspace $L^p (X_{i}, \mu) \subset L^p (X, \mu)$
is $\rh$-invariant for every $i \in I,$
and such that for every $i \in I$
the \rpn{} $x \mapsto \rh (x) |_{L^p (X_{i}, \mu)}$
has enough isometries.
\item\label{D_2Y15_EnId_Compat}
If $\rh$~locally has enough isometries,
we say that {\emph{$\rh$~dominates the spatial representation}}
if
we can choose the partition
$X = \coprod_{i \in I} X_{i}$ in~(\ref{D_2Y15_EnId_Loc})
so that there is $i_0 \in I$
for which
the \rpn{} $x \mapsto \rh (x) |_{L^p (X_{i_0}, \mu)}$
is spatial.
\end{enumerate}
\end{dfn}

For $p = 2,$
in Definition \ref{D_2Y15_EnId}(\ref{D_2Y15_EnId_Compat})
we use spatial representations as in Definition~7.1 of~\cite{Ph5}.
Up to unitary equivalence,
however,
we may as well just ask for *-representations.

\begin{lem}\label{L_2Y15_IrrEq}
Let $d \in \N,$
and let $G \subset \inv (M_d)$ be a finite subgroup.
Then \tfae:
\begin{enumerate}
\item\label{L_2Y15_IrrEq_Irr}
The natural action of $G$ on $\C^d$ is irreducible.
\item\label{L_2Y15_IrrEq_Cnt}
If $a \in M_d$ commutes with every element of~$G,$
then $a \in \C \cdot 1.$
\item\label{L_2Y15_IrrEq_Scl}
For all $a \in M_d,$
the element $\sum_{g \in G} g a g^{-1}$ is a scalar multiple of~$1.$
\item\label{L_2Y15_IrrEq_Tr}
For all $a \in M_d,$
we have
\[
\frac{1}{\card (G)} \sum_{g \in G} g a g^{-1} = \tr (a) \cdot 1.
\]
\end{enumerate}
\end{lem}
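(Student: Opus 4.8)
The plan is to prove the four conditions equivalent by establishing the cycle $(\ref{L_2Y15_IrrEq_Irr}) \Rightarrow (\ref{L_2Y15_IrrEq_Cnt}) \Rightarrow (\ref{L_2Y15_IrrEq_Scl}) \Rightarrow (\ref{L_2Y15_IrrEq_Tr}) \Rightarrow (\ref{L_2Y15_IrrEq_Irr})$, though in fact $(\ref{L_2Y15_IrrEq_Tr}) \Rightarrow (\ref{L_2Y15_IrrEq_Scl})$ is trivial and the last implication back to irreducibility can be run most cleanly through condition~(\ref{L_2Y15_IrrEq_Cnt}), so I will prove $(\ref{L_2Y15_IrrEq_Irr}) \Leftrightarrow (\ref{L_2Y15_IrrEq_Cnt})$ as a standalone equivalence (Schur's lemma) and then $(\ref{L_2Y15_IrrEq_Cnt}) \Rightarrow (\ref{L_2Y15_IrrEq_Tr}) \Rightarrow (\ref{L_2Y15_IrrEq_Scl}) \Rightarrow (\ref{L_2Y15_IrrEq_Cnt})$.

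First, $(\ref{L_2Y15_IrrEq_Irr}) \Leftrightarrow (\ref{L_2Y15_IrrEq_Cnt})$ is exactly Schur's lemma for the finite group $G$ acting on $\C^d$: the commutant of the representation is a division algebra over $\C$, hence equals $\C \cdot 1$, when the representation is irreducible; conversely if the representation is reducible, a proper invariant subspace $V$ gives an idempotent projecting onto $V$ (one can average any idempotent with range $V$ over $G$, or simply note $\C^d = V \oplus W$ with $W$ a $G$-invariant complement obtained by averaging a projection, since $G$ is finite) which commutes with $G$ and is not scalar. This uses only that $\C$ is algebraically closed and $G$ is finite, so Maschke applies.

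Next, assume~(\ref{L_2Y15_IrrEq_Cnt}) and prove~(\ref{L_2Y15_IrrEq_Tr}). For fixed $a \in M_d$, set $b = \frac{1}{\card(G)} \sum_{g \in G} g a g^{-1}$. A direct computation shows $h b h^{-1} = b$ for every $h \in G$ (reindexing the sum by $g \mapsto h^{-1} g$), so by~(\ref{L_2Y15_IrrEq_Cnt}) we have $b = \ld \cdot 1$ for some $\ld \in \C$. Taking the normalized trace $\tr$ of both sides and using that $\tr$ is conjugation-invariant ($\tr(g a g^{-1}) = \tr(a)$) gives $\ld = \tr(b) = \tr(a)$, which is~(\ref{L_2Y15_IrrEq_Tr}). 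The implication $(\ref{L_2Y15_IrrEq_Tr}) \Rightarrow (\ref{L_2Y15_IrrEq_Scl})$ is immediate, and $(\ref{L_2Y15_IrrEq_Scl}) \Rightarrow (\ref{L_2Y15_IrrEq_Cnt})$ follows because if $a$ commutes with every element of $G$ then $\sum_{g \in G} g a g^{-1} = \card(G) \cdot a$, which is a scalar multiple of $1$ by~(\ref{L_2Y15_IrrEq_Scl}), forcing $a \in \C \cdot 1$.

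I do not expect any real obstacle here; this is standard representation theory of finite groups, and the only point requiring a moment's care is the reducible direction of Schur's lemma, where one must produce the $G$-invariant complement by averaging — that is where finiteness of $G$ (equivalently Maschke's theorem) is genuinely used. Everything else is bookkeeping with the conjugation action and the trace.
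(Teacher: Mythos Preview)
Your proof is correct and follows essentially the same approach as the paper: both treat $(\ref{L_2Y15_IrrEq_Irr}) \Leftrightarrow (\ref{L_2Y15_IrrEq_Cnt})$ as Schur's lemma and then cycle through the remaining conditions using the averaging argument and the trace to identify the scalar. The only difference is cosmetic --- the paper runs the cycle $(\ref{L_2Y15_IrrEq_Cnt}) \Rightarrow (\ref{L_2Y15_IrrEq_Scl}) \Rightarrow (\ref{L_2Y15_IrrEq_Tr}) \Rightarrow (\ref{L_2Y15_IrrEq_Cnt})$ while you run $(\ref{L_2Y15_IrrEq_Cnt}) \Rightarrow (\ref{L_2Y15_IrrEq_Tr}) \Rightarrow (\ref{L_2Y15_IrrEq_Scl}) \Rightarrow (\ref{L_2Y15_IrrEq_Cnt})$, and you supply more detail on the Schur step than the paper (which just calls it well known).
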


\begin{proof}
The equivalence of (\ref{L_2Y15_IrrEq_Irr})
and~(\ref{L_2Y15_IrrEq_Cnt})
is well known.

Assume~(\ref{L_2Y15_IrrEq_Cnt}),
and let $a \in M_d.$
Set $b = \sum_{g \in G} g a g^{-1}.$
Then $b$ commutes with every element of~$G,$
so is a scalar.
This is~(\ref{L_2Y15_IrrEq_Scl}).

Now assume~(\ref{L_2Y15_IrrEq_Scl}),
and let $a \in M_d.$
Set
\[
b = \frac{1}{\card (G)} \sum_{g \in G} g a g^{-1}.
\]
We have $\tr (g a g^{-1}) = \tr (a)$ for all $g \in G.$
By hypothesis, there is $\ld \in \C$ such that
$b = \ld \cdot 1.$
Now
\[
\ld = \tr (b)
    = \frac{1}{\card (G)} \sum_{g \in G} \tr (g a g^{-1})
    = \frac{1}{\card (G)} \sum_{g \in G} \tr (a)
    = \tr (a).
\]
This is~(\ref{L_2Y15_IrrEq_Tr}).

Finally, assume~(\ref{L_2Y15_IrrEq_Tr}).
We prove~(\ref{L_2Y15_IrrEq_Cnt}).
Suppose $a \in M_d$ commutes with every element of~$G.$
Then
\[
a = \frac{1}{\card (G)} \sum_{g \in G} g a g^{-1}
  = \tr (a) \cdot 1,
\]
so is a scalar multiple of~$1.$
\end{proof}

\begin{cor}\label{C_2Z22_ProdIrr}
Let $d_1, d_2 \in \N$
and let $G_1 \subset \inv (M_{d_1})$ and $G_2 \subset \inv (M_{d_2})$
be finite subgroups.
Set
\[
G = \big\{ g_1 \otimes g_2 \colon
      {\mbox{$g_1 \in G_1$ and $g_2 \in G_2$}} \big\}
   \subset \inv (M_{d_1} \otimes M_{d_2})
   = \inv (M_{d_1 d_2}).
\]
Then $G$ is a finite subgroup of $\inv (M_{d_1 d_2}),$
and
if the natural actions of $G_1$ on $\C^{d_1}$
and $G_2$ on $\C^{d_2}$ are irreducible,
then the natural action of $G$ on $\C^{d_2 d_2}$ is irreducible.
\end{cor}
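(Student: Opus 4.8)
The plan is to reduce everything to the characterization of irreducibility given by condition~(\ref{L_2Y15_IrrEq_Tr}) of Lemma~\ref{L_2Y15_IrrEq}, since that condition tensors cleanly. First I would check that $G$ is a subgroup: this is immediate from the multiplicativity of the tensor product of matrices, $(g_1 \otimes g_2)(h_1 \otimes h_2) = g_1 h_1 \otimes g_2 h_2$, together with $1 \otimes 1 = 1$ and $(g_1 \otimes g_2)^{-1} = g_1^{-1} \otimes g_2^{-1}$; finiteness is clear since $\card(G) \leq \card(G_1) \card(G_2)$. (One should note $G$ need not have order exactly $\card(G_1)\card(G_2)$, e.g.\ if $G_1$ and $G_2$ share scalar matrices, but that is irrelevant here.)

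For irreducibility, I would argue as follows. Assume the natural actions of $G_1$ on $\C^{d_1}$ and $G_2$ on $\C^{d_2}$ are irreducible. By the implication (\ref{L_2Y15_IrrEq_Irr})$\Rightarrow$(\ref{L_2Y15_IrrEq_Tr}) of Lemma~\ref{L_2Y15_IrrEq}, for all $a_1 \in M_{d_1}$ and $a_2 \in M_{d_2}$ we have
\[
\frac{1}{\card(G_1)} \sum_{g_1 \in G_1} g_1 a_1 g_1^{-1} = \tr(a_1) \cdot 1
\andeqn
\frac{1}{\card(G_2)} \sum_{g_2 \in G_2} g_2 a_2 g_2^{-1} = \tr(a_2) \cdot 1.
\]
Now to verify condition~(\ref{L_2Y15_IrrEq_Cnt}) for $G$, suppose $a \in M_{d_1 d_2} = M_{d_1} \otimes M_{d_2}$ commutes with every element of $G$. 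Write $a = \sum_{i} a_1^{(i)} \otimes a_2^{(i)}$ as a finite sum of elementary tensors. Averaging over the subgroup $G_1 \times \{1\}$-part, i.e.\ over the elements $g_1 \otimes 1$ with $g_1 \in G_1$ (which lie in $G$), and using that $a$ commutes with each such element, gives
\[
a = \frac{1}{\card(G_1)} \sum_{g_1 \in G_1} (g_1 \otimes 1) a (g_1^{-1} \otimes 1)
  = \sum_{i} \left( \frac{1}{\card(G_1)} \sum_{g_1 \in G_1} g_1 a_1^{(i)} g_1^{-1} \right) \otimes a_2^{(i)}
  = \sum_{i} \tr(a_1^{(i)}) \, 1 \otimes a_2^{(i)} = 1 \otimes b
\]
for some $b \in M_{d_2}$. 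Then averaging the resulting element $1 \otimes b$ over the elements $1 \otimes g_2$ with $g_2 \in G_2$ (again all in $G$, and all commuting with $a = 1 \otimes b$) yields $1 \otimes b = 1 \otimes \tr(b) \cdot 1 \in \C \cdot 1$. Hence $a$ is a scalar multiple of the identity, which is condition~(\ref{L_2Y15_IrrEq_Cnt}) for $G$; by Lemma~\ref{L_2Y15_IrrEq} again, (\ref{L_2Y15_IrrEq_Cnt})$\Rightarrow$(\ref{L_2Y15_IrrEq_Irr}), so the natural action of $G$ on $\C^{d_1 d_2}$ is irreducible.

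I do not anticipate a genuine obstacle here; the statement is really just the observation that Schur's-lemma-type irreducibility criteria are compatible with tensor products, packaged through Lemma~\ref{L_2Y15_IrrEq}. The only point requiring a modicum of care is that averaging is performed over the two commuting sub-families $\{g_1 \otimes 1\}$ and $\{1 \otimes g_2\}$ separately rather than over all of $G$ at once, so that the single-factor identity from Lemma~\ref{L_2Y15_IrrEq} can be applied in each tensor slot in turn. A cosmetic typo in the statement (``$\C^{d_2 d_2}$'' should read ``$\C^{d_1 d_2}$'') should also be corrected.
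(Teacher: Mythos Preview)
Your proof is correct. Both your argument and the paper's reduce to Lemma~\ref{L_2Y15_IrrEq}, but they verify different equivalent conditions: the paper checks condition~(\ref{L_2Y15_IrrEq_Scl}) directly by computing, for an elementary tensor $a \otimes b$, the single sum $\sum_{g \in G} g(a \otimes b)g^{-1}$ and factoring it as $\big(\sum_{g_1 \in G_1} g_1 a g_1^{-1}\big) \otimes \big(\sum_{g_2 \in G_2} g_2 b g_2^{-1}\big)$, a tensor of scalars. You instead verify condition~(\ref{L_2Y15_IrrEq_Cnt}) by taking an arbitrary commuting element and averaging in two passes, first over $\{g_1 \otimes 1 : g_1 \in G_1\}$ and then over $\{1 \otimes g_2 : g_2 \in G_2\}$. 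The paper's route is marginally shorter; your two-stage averaging has the mild advantage of sidestepping the bookkeeping issue you flagged (that $\card(G)$ may be strictly less than $\card(G_1)\card(G_2)$, so the sum over $G$ is not literally the double sum over $G_1 \times G_2$), though of course this does not affect the paper's conclusion since only ``is a scalar'' is needed. Your observation about the typo $\C^{d_2 d_2}$ is also correct.
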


\begin{proof}
The first part is obvious.
For irreducibility,
we use Lemma~\ref{L_2Y15_IrrEq}(\ref{L_2Y15_IrrEq_Scl}),
and it is enough to check the condition on elements of the
form $a \otimes b.$
We have
\[
\sum_{g \in G} g (a \otimes b) g^{-1}
  = \left( \sum_{g_1 \in G_1} g_1 a g_1^{-1} \right)
         \otimes \left( \sum_{g_2 \in G_2} g_2 b g_2^{-1} \right),
\]
which is the tensor product of scalars and so a scalar.
\end{proof}

For those readers only concerned with simplicity of $\OP{d}{p},$
the first of the following examples is the only one that is needed.
Before stating it, we introduce the finite subgroup
of $\inv (M_d)$ which we most commonly use.

\begin{dfn}\label{D_2Y16_SgnPerm}
Let $d \in \N.$
Let $(e_{j, k})_{j, k = 1}^d$ be the standard system
of matrix units in $M_d,$
and let $S_d$ be the symmetric group.
The subgroup of $\inv (M_d)$ given by
\[
G = \left\{ \sum_{j = 1}^d \ep_j e_{j, \sm (j)} \colon
      {\mbox{$\sm \in S_d$
         and $\ep_1, \ep_2, \ldots, \ep_d \in \{ -1, 1 \}$}} \right\}.
\]
is called the group of
{\emph{signed permutation matrices}}.
\end{dfn}

\begin{lem}\label{L_2Y16_PrpSgPm}
Let $d \in \N$ and let $p \in [1, \I].$
Then the signed permutation matrices act on~$l_d^p$
via isometries,
and this action is irreducible.
Moreover, the signed permutation matrices span~$M_d.$
\end{lem}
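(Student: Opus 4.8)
The statement has three parts, and the plan is to dispatch them in increasing order of difficulty. \emph{First}, the claim that signed permutation matrices act isometrically on $l_d^p$: a signed permutation matrix $g = \sum_{j=1}^d \ep_j e_{j,\sm(j)}$ sends the standard basis vector $\dt_k$ to $\pm \dt_{\sm^{-1}(k)}$ (up to sign and up to which index convention one uses), hence permutes the coordinates of a vector $\xi = (\xi_1, \ldots, \xi_d)$ and changes some signs. Since the $l^p$~norm $\big( \sum_j |\xi_j|^p \big)^{1/p}$ depends only on the multiset $\{ |\xi_1|, \ldots, |\xi_d| \}$, it is invariant under such an operation, and likewise for $p = \I$ with the max norm. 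So $\| g \xi \|_p = \| \xi \|_p$, i.e. $g$ is isometric. (One should phrase this cleanly for both $p < \I$ and $p = \I$ in one line.)

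\emph{Second}, irreducibility of the action on $\C^d = l_d^2$ (irreducibility is a purely algebraic/linear statement, so the value of $p$ is irrelevant here). The cleanest route is to use Lemma~\ref{L_2Y15_IrrEq}: it suffices to check that any $a \in M_d$ commuting with every signed permutation matrix is a scalar multiple of~$1$. Write $a = (a_{j,k})$. Conjugating by the permutation matrices alone forces $a_{j,j}$ to be independent of~$j$ and $a_{j,k}$ (for $j \neq k$) to depend only on the ``shape'' of the pair, which by transitivity of $S_d$ on ordered pairs of distinct indices means $a_{j,k}$ is a single constant~$c$ for all $j \neq k$. Then conjugating by the diagonal sign matrix $\diag(1, -1, 1, \ldots, 1)$ (i.e. $\ep_1 = 1$, $\ep_i = -1$ for one fixed $i \neq j$, say) flips the sign of the off-diagonal entries in row and column~$i$ but fixes the rest, and commutativity forces $c = -c$, hence $c = 0$. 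So $a$ is diagonal with constant diagonal, i.e. $a \in \C \cdot 1$. By Lemma~\ref{L_2Y15_IrrEq} this gives irreducibility.

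\emph{Third}, that the signed permutation matrices span $M_d$. The standard permutation matrices together with diagonal sign matrices already generate (as an \emph{algebra}) the full $M_d$, but we want \emph{linear} span. Observe that $G$ contains each permutation matrix $u_\sm = \sum_j e_{j,\sm(j)}$ and, taking $\sm = \id$, each diagonal sign matrix $\diag(\ep_1, \ldots, \ep_d)$. For a fixed index~$i$, averaging $\tfrac12\big( \diag(1,\ldots,1) + \diag(1, \ldots, 1, -1, 1, \ldots, 1) \big)$ over the sign in slot~$i$ produces $\sum_{j \neq i} e_{j,j}$, and subtracting from $1$ gives the rank-one diagonal idempotent $e_{i,i}$; so each $e_{i,i}$ lies in the linear span. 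Then $e_{i,i} u_{\sm} = e_{i, \sm(i)}$ lies in the span as well, and as $\sm$ ranges over transpositions (and $\id$) we obtain every matrix unit $e_{i,k}$. Hence the span is all of $M_d$. (Alternatively, one can invoke Lemma~\ref{L_2Y15_IrrEq}(\ref{L_2Y15_IrrEq_Tr}) plus a dimension count: the linear span is a $G$-invariant subspace of $M_d$ containing~$1$, and a short argument identifies it with all of $M_d$; but the explicit matrix-unit construction above is the most transparent.)

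The only step with any subtlety is the spanning claim: one must be careful that ``generated as an algebra'' is not what is asserted, and give the explicit linear combinations producing the $e_{i,i}$ and then the $e_{i,k}$. The isometry and irreducibility parts are routine once one appeals to Lemma~\ref{L_2Y15_IrrEq}.
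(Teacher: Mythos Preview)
Your proof is correct, but the organization differs from the paper's in two places worth noting.

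For the spanning claim, the paper gives a shorter direct construction: given $j,k$, choose $\sm \in S_d$ with $\sm(j)=k$, and set $g = \sum_{l} e_{l,\sm(l)}$ and $h = e_{j,k} - \sum_{l \neq j} e_{l,\sm(l)}$ (that is, the same permutation matrix but with all signs except row~$j$ flipped). Then $g,h \in G$ and $e_{j,k} = \tfrac{1}{2}(g+h)$. Your route---first extracting the diagonal idempotents $e_{i,i}$ by averaging sign matrices, then right-multiplying by a permutation matrix and using that $G$ is closed under multiplication to stay inside $\spn(G)$---is perfectly valid but a step longer. (Your argument does implicitly use that $G$ is a group, which you should say explicitly where you pass from $e_{i,i} \in \spn(G)$ to $e_{i,i} u_{\sm} \in \spn(G)$.)

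For irreducibility, you argue directly via the commutant characterization in Lemma~\ref{L_2Y15_IrrEq}, independently of the spanning claim. The paper instead deduces irreducibility \emph{from} spanning: once $G$ spans $M_d$, any $G$-invariant subspace of $\C^d$ is $M_d$-invariant, hence trivial. So the paper's logical flow is ``spanning $\Rightarrow$ irreducibility,'' whereas yours proves the two independently. Your commutant computation is fine but becomes redundant once spanning is in hand; the paper's ordering is more economical.
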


\begin{proof}
It is immediate that the signed permutation matrices act on~$l_d^p$
via isometries.
Irreducibility follows easily from the last sentence.
For the last sentence, let $G$ be the group of signed permutation matrices.
Let the notation be as in Definition~\ref{D_2Y16_SgnPerm}.
It suffices to show that $e_{j, k} \in \spn (G)$
for all $j, k \in \{ 1, 2, \ldots, d \}.$
Choose $\sm\in S_d$
such that $\sm (j) = k.$
Then
\[
g = \sum_{l = 1}^d e_{l, \sm (l)}
\andeqn
h = e_{j, k} - \sum_{l \neq j} e_{l, \sm (l)}
\]
satisfy $g, h \in G$ and $e_{j, k} = \tfrac{1}{2} (g + h).$
So $e_{j, k} \in \spn (G).$
\end{proof}

\begin{exa}\label{E_2Y15_MdIdRep}
Let $p \in [1, \I)$
and let $d \in \N.$
Then the identity \rpn{} of $M_d$ on $l_d^p$
has enough isometries.
This is immediate from Lemma~\ref{L_2Y16_PrpSgPm}.
\end{exa}

\begin{exa}\label{E_2Y15_Sim}
Let $\XBM$ be a \sfm,
let $p \in [1, \I),$
and let $d \in \N.$
Let $\rh, \sm \colon M_d \to \LLp$
be unital representations,
and suppose that there is $u \in \inv (M_d)$
such that $\sm (a) = \rh (u a u^{-1})$
for all $a \in M_d.$
If $\rh$ has enough isometries,
then so does~$\sm.$

To see this, let $G \subset \inv (A)$ be a finite subgroup
witnessing the fact that $\rh$ has enough isometries.
Then use $u^{-1} G u$ to show that $\sm$ has enough isometries.
\end{exa}

\begin{exa}\label{E_2Y15_pSum}
Let $p \in [1, \I)$
and let $d \in \N.$
Let $I$ be a countable index set,
and for $i \in I$ let $(X_i, {\mathcal{B}}_i, \mu_i)$
be a \sfm{}
and let $\rh_i \colon M_d \to L (L^p (X_i, \mu_i))$
be a \rpn{} which locally has enough isometries.
Assume that $\sup_{i \in I} \| \rh_i (a) \| < \I$ for all $a \in M_d.$
Set $X = \coprod_{i \in I} X_i,$
with the obvious measure~$\mu.$
Then the $p$-direct sum
(see Remark~2.15 of~\cite{Ph5} and the discussion afterwards)
$\rh = \bigoplus_{i \in I} \rh_i \colon M_d \to \LLp$
locally has enough isometries.
\end{exa}

\begin{exa}\label{E_2Y15_CptEn}
Let $p \in [1, \I)$
and let $d \in \N.$
Let $K \subset \inv (M_d)$
be a nonempty compact set.
Then there exist a probability space $\XBM$
and a unital representation $\rh \colon \MP{d}{p} \to \LLp$
which locally has enough isometries
and such that for all $a \in \MP{d}{p}$ we have
$\| \rh (a) \| = \sup_{u \in K} \| u a u^{-1} \|.$
If $1 \in K,$
we can arrange that $\rh$ dominates the spatial representation.

Choose $u_1, u_2, \ldots \in K$
such that $\{ u_1, u_2, \ldots \}$ is dense in~$K.$
If $1 \in K,$
we require that $u_1 = 1.$
Take $X = \{ 1, 2, \ldots, d \} \times \N$
with the atomic measure $\mu$ determined by
$\mu ( \{ (j, n) \} ) = 2^{-n} d^{-1}$ for $j = 1, 2, \ldots, d$
and $n \in \N.$
For $n \in \N,$
take $X_n = \{ 1, 2, \ldots, d \} \times \{ n \}$
and let $\mu_n$ be the restriction of $\mu$ to~$X_n.$
Identify $L^p (X_n, \mu_n)$ with $L (l_d^p)$ in the obvious way
(incorporating the factor $2^{- n} d^{-1}$
by which the measures differ).
Let $\sm_n \colon M_d \to L (L^p (X_n, \mu_n))$
be the identity representation gotten from this identification.
Define a \rpn{} $\rh_n \colon M_d \to L (L^p (X_n, \mu_n))$
by $\rh_n (a) = \sm_n (u_n a u_n^{-1})$ for $a \in L_d.$
Then let $\rh \colon M_d \to \LLp$
be the $p$-direct sum of the \rpn{s}~$\rh_n$
as in Example~\ref{E_2Y15_pSum}.
The condition $\sup_{n \in \N} \| \rh_n (a) \| < \I$
is satisfied because $K$ is compact.
Using density of $\{ u_1, u_2, \ldots \}$ at the second step,
we get
\[
\| \rh (a) \|
 = \sup_{n \in \N} \| u_n a u_n^{-1} \|
 = \sup_{u \in K} \| u a u^{-1} \|.
\]
The \rpn{} $\rh$ locally has enough isometries
by Example~\ref{E_2Y15_pSum}.
\end{exa}

\begin{lem}\label{L_2Y15_TPrd}
Let $p \in [1, \I)$
and let $m, n \in \N.$
Identify $\MP{m n}{p}$ with $\MP{m}{p} \otimes_p \MP{n}{p}$
as in Corollary~\ref{C_2Y21MpTens}.
Let $\XBM$ and $\YCN$ be \sfm{s},
and let $\rh \colon \MP{m}{p} \to \LLp$
and $\sm \colon \MP{n}{p} \to L (L^p (Y, \nu))$
be unital \rpn{s}.
Let $\rh \otimes_p \sm \colon
 \MP{m n}{p} \to L \big( L^p (X \times Y, \, \mu \times \nu) \big)$
be as in Lemma~\ref{L_2Z29_Lower}.
\begin{enumerate}
\item\label{L_2Y15_TPrd_EnI}
If $\rh$ and $\sm$ have enough isometries,
then so does
$\rh \otimes_p \sm.$
\item\label{L_2Y15_TPrd_LEI}
If $\rh$ and $\sm$ locally have enough isometries,
then so does
$\rh \otimes_p \sm.$
\item\label{L_2Y15_TPrd_DmSp}
If $\rh$ and $\sm$ dominate the spatial representation,
then so does
$\rh \otimes_p \sm.$
\end{enumerate}
\end{lem}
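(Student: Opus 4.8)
The plan is to deduce each part from facts already established for finite-dimensional representations, plus some bookkeeping with partitions; the three ingredients are Corollary~\ref{C_2Z22_ProdIrr} (a tensor product of irreducibly acting finite groups acts irreducibly), the identity $\| a \otimes b \| = \| a \| \cdot \| b \|$ of Theorem~\ref{T-LpTP}(\ref{T-LpTP-3b}), and Corollary~\ref{C_2Y21MpTens} (a tensor product of spatial, equivalently isometric, unital representations of matrix algebras is spatial). For part~(\ref{L_2Y15_TPrd_EnI}), let $G_1 \subset \inv (M_m)$ and $G_2 \subset \inv (M_n)$ be finite subgroups witnessing that $\rh$ and $\sm$ have enough isometries, i.e. acting irreducibly on $\C^m$ and $\C^n$ with $\| \rh (g_1) \| = \| \sm (g_2) \| = 1$ for all $g_1 \in G_1$ and $g_2 \in G_2$. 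Put $G = \{ g_1 \otimes g_2 : g_1 \in G_1, \ g_2 \in G_2 \} \subset \inv (M_{m n})$. By Corollary~\ref{C_2Z22_ProdIrr}, $G$ is a finite subgroup acting irreducibly on $\C^{m n}$, and since $(\rh \otimes_p \sm)(g_1 \otimes g_2) = \rh (g_1) \otimes \sm (g_2)$, Theorem~\ref{T-LpTP}(\ref{T-LpTP-3b}) gives $\| (\rh \otimes_p \sm)(g_1 \otimes g_2) \| = \| \rh (g_1) \| \cdot \| \sm (g_2) \| = 1$; thus $G$ witnesses that $\rh \otimes_p \sm$ has enough isometries.

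For part~(\ref{L_2Y15_TPrd_LEI}), I would choose at most countable partitions $X = \coprod_{i \in I} X_i$ and $Y = \coprod_{j \in J} Y_j$ as in Definition~\ref{D_2Y15_EnId}(\ref{D_2Y15_EnId_Loc}), so that each $L^p (X_i, \mu)$ is $\rh$-invariant with $\mu (X_i) > 0$, each $L^p (Y_j, \nu)$ is $\sm$-invariant with $\nu (Y_j) > 0$, and the representations $\rh_i$ given by $x \mapsto \rh (x)|_{L^p (X_i, \mu)}$ and $\sm_j$ given by $x \mapsto \sm (x)|_{L^p (Y_j, \nu)}$ have enough isometries. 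Then $\{ X_i \times Y_j \}$ is an at most countable partition of $X \times Y$ into measurable sets of positive measure, and under the identification of Theorem~\ref{T-LpTP} one has $L^p (X_i \times Y_j, \, \mu \times \nu) = L^p (X_i, \mu) \otimes_p L^p (Y_j, \nu)$. This subspace is invariant under $\rh (a) \otimes \sm (b)$ for every elementary tensor $a \otimes b$, hence — as $M_m \otimes M_n = M_{m n}$ is spanned by elementary tensors — under $(\rh \otimes_p \sm)(x)$ for all $x \in \MP{m n}{p}$; and evaluating on elementary tensors shows that $x \mapsto (\rh \otimes_p \sm)(x)|_{L^p (X_i \times Y_j)}$ equals $\rh_i \otimes_p \sm_j$, which has enough isometries by part~(\ref{L_2Y15_TPrd_EnI}). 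So $\rh \otimes_p \sm$ locally has enough isometries.

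For part~(\ref{L_2Y15_TPrd_DmSp}), I would choose the partitions so that in addition $\rh_{i_0}$ and $\sm_{j_0}$ are spatial for some $i_0 \in I$ and $j_0 \in J$. Re-running the computation of part~(\ref{L_2Y15_TPrd_LEI}), the restriction of $\rh \otimes_p \sm$ to $L^p (X_{i_0} \times Y_{j_0}, \, \mu \times \nu)$ is $\rh_{i_0} \otimes_p \sm_{j_0}$, which is spatial by Corollary~\ref{C_2Y21MpTens} (with Theorem~\ref{T_2Y17Iso} translating between ``spatial'' and ``isometric''). Hence $\rh \otimes_p \sm$ dominates the spatial representation. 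I expect the only step needing genuine care to be the compatibility assertion used in parts~(\ref{L_2Y15_TPrd_LEI}) and~(\ref{L_2Y15_TPrd_DmSp}) — that restricting $\rh \otimes_p \sm$ to the invariant subspace $L^p (X_i, \mu) \otimes_p L^p (Y_j, \nu)$ really reproduces $\rh_i \otimes_p \sm_j$, equivalently that $(\rh(a) \otimes \sm(b))|_{L^p(X_i\times Y_j)} = \rh(a)|_{L^p(X_i,\mu)} \otimes \sm(b)|_{L^p(Y_j,\nu)}$ — which is settled by checking on elementary tensors $\xi \otimes \et$ and using finite-dimensionality of $\MP{m n}{p}$; everything else is a direct appeal to the cited results.
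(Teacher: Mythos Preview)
Your proof is correct and follows essentially the same approach as the paper: form the product group $K = \{ g_1 \otimes g_2 \}$ and invoke Corollary~\ref{C_2Z22_ProdIrr} plus Theorem~\ref{T-LpTP}(\ref{T-LpTP-3b}) for part~(\ref{L_2Y15_TPrd_EnI}), use the product partition $\{ X_i \times Y_j \}$ for part~(\ref{L_2Y15_TPrd_LEI}), and pick out the block $X_{i_0} \times Y_{j_0}$ for part~(\ref{L_2Y15_TPrd_DmSp}). The only cosmetic difference is that the paper cites Lemma~\ref{L_2Y21MpTRep} directly for part~(\ref{L_2Y15_TPrd_DmSp}), whereas you go through its corollary (Corollary~\ref{C_2Y21MpTens}) together with Theorem~\ref{T_2Y17Iso}; these are the same fact.
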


\begin{proof}
We prove~(\ref{L_2Y15_TPrd_EnI}).
Let $G \subset \inv (\MP{m}{p})$ and $H \subset \inv ( \MP{n}{p} )$
be finite subgroups witnessing the fact that
$\rh$ and $\sm$ have enough isometries.
Define
\[
K = \big\{ g \otimes h \colon
      {\mbox{$g \in G$ and $h \in H$}} \big\}
   \subset \inv (\MP{m}{p} \otimes_p \MP{n}{p})
   = \inv (\MP{m n}{p}).
\]
Then $K$ is a finite subgroup.
Since $K$ is a group,
Theorem~\ref{T-LpTP}(\ref{T-LpTP-3b})
implies that $(\rh \otimes_p \sm) (k)$
is an isometry for all $k \in K.$
To finish the proof,
we need only check that the natural action of $K$ on $\C^{m n}$
is irreducible.
This follows from Corollary~\ref{C_2Z22_ProdIrr}.

For~(\ref{L_2Y15_TPrd_LEI}),
if $X = \coprod_{i \in I} X_{i}$
and $Y = \coprod_{j \in J} Y_{j}$
are decompositions for $\rh$ and $\sm$
as in Definition~\ref{D_2Y15_EnId}(\ref{D_2Y15_EnId_Loc}),
then, using~(\ref{L_2Y15_TPrd_EnI}),
the decomposition
$X \times Y = \coprod_{i \in I} \coprod_{j \in J} X_i \times Y_j$
shows that $\rh \otimes_p \sm$ locally has enough isometries.

We prove~(\ref{L_2Y15_TPrd_DmSp}).
Continue with the notation in the proof of~(\ref{L_2Y15_TPrd_LEI}),
and let $\mu$ and $\nu$ be the measures on $X$ and~$Y.$
By hypothesis, there are $i_0 \in I$ and $j_0 \in J$
such that the
\rpn{s} $x \mapsto \rh (x) |_{L^p (X_{i_0}, \mu)}$
and $y \mapsto \sm (y) |_{L^p (Y_{j_0}, \nu)}$
are spatial.
Let $Z$ be the component $X_{i_0} \times Y_{j_0}$
of the decomposition in the proof of~(\ref{L_2Y15_TPrd_LEI}).
Then $z \mapsto (\rh \otimes_p \sm) (z) |_{L^p (Z, \mu \times \nu)}$
is spatial by Lemma~\ref{L_2Y21MpTRep}.
\end{proof}

\begin{dfn}\label{D_2Y15_IdPr}
Let $B$ be a unital Banach algebra,
let $A \subset B$ be a unital subalgebra,
and let $E \colon B \to A$ be a Banach conditional expectation
(\Def{D:CondExpt}).
We say that $E$ is
{\emph{ideal preserving}}
if for every closed ideal $I \subset B,$
we have $E (I) \subset I \cap A.$
\end{dfn}

Banach conditional expectations
are usually not ideal preserving.
For instance, those in Example~\ref{E_2Y16_HmIsE}
are almost never ideal preserving.

\begin{lem}\label{L:CondExptMd}
Let $d \in \N$ and let $p \in [1, \I).$
Let $\XBM$ and $\YCN$ be \sfm{s}.
Let $\rh \colon M_d \to \LLp$
be a representation which locally has enough isometries
in the sense of Definition~\ref{D_2Y15_EnId}(\ref{D_2Y15_EnId_Loc}),
and set $D = \rh (M_d).$
Let $A$ be any closed unital subalgebra of $L (L^p (Y, \nu)).$
Let $D \otimes_p A$
be as in \Def{D:LpTensorPrd}.
Identify $A$ with $\{ 1 \otimes a \colon a \in A \}.$
Then there exists a unique
continuous linear map $E \colon D \otimes_p A \to A$
such that
$E (\rh (y) a) = \tr (y) a$ for all $a \in A$ and $y \in M_d,$
and $E$ is an ideal preserving Banach conditional expectation.
\end{lem}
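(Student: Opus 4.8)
The plan is to build $E$ locally, one block of the partition at a time, using the averaging construction of Proposition~\ref{P:Avg} applied to the finite group of isometries on each block, and then to assemble the pieces and verify the asserted properties globally. First I would invoke Definition~\ref{D_2Y15_EnId}(\ref{D_2Y15_EnId_Loc}) to write $X = \coprod_{i \in I} X_i$ with each $L^p(X_i, \mu)$ being $\rh$-invariant and each restricted representation $\rh_i \colon M_d \to L(L^p(X_i, \mu))$ having enough isometries, witnessed by a finite group $G_i \subset \inv(M_d)$ acting irreducibly on $\C^d$ with $\|\rh_i(g)\| = 1$ for $g \in G_i$. On the block $X_i \times Y$, the finite group $\{\rh_i(g) \otimes 1 \colon g \in G_i\}$ acts on $L^p(X_i \times Y, \mu\times\nu)$ by isometries (Theorem~\ref{T-LpTP}(\ref{T-LpTP-3b})), and conjugation by these isometries is an isometric action of $G_i$ on $D_i \otimes_p A$, where $D_i = \rh_i(M_d)$. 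Proposition~\ref{P:Avg} then gives a Banach conditional expectation $E_i$ onto the fixed-point algebra. Lemma~\ref{L_2Y15_IrrEq}(\ref{L_2Y15_IrrEq_Tr}) (applied with $G = G_i$) is the key computational input: averaging $\rh_i(y) \otimes a$ over $g \mapsto (\rh_i(g)\otimes 1)(\rh_i(y)\otimes a)(\rh_i(g)^{-1}\otimes 1) = \rh_i(gyg^{-1}) \otimes a$ yields $\tr(y)(1 \otimes a)$, so the fixed-point algebra is exactly $1 \otimes A$ and $E_i(\rh_i(y) \otimes a) = \tr(y) a$. In particular $E_i$ does not depend on the choice of $G_i$.

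Next I would patch these together. Since $L^p(X, \mu) = \bigoplus_p L^p(X_i, \mu)$ and each block is $\rh$-invariant, an element of $D \otimes_p A$ decomposes compatibly, and I would define $E$ blockwise by $E = \bigoplus_i E_i$ in the appropriate sense, landing in $A = 1 \otimes A$. The norm bound $\|E\| \le 1$ passes from the blocks to the whole: for $b \in D \otimes_p A$, the restriction $b_i$ to the $i$-th block satisfies $\|E_i(b_i)\| \le \|b_i\| \le \|b\|$, and since $E(b)$ acts on $L^p(Y, \nu)$ with the same action independent of $i$, one gets $\|E(b)\| = \sup_i \|E(b)|\text{(block }i)\| \le \|b\|$; combined with $E(1\otimes a) = 1 \otimes a$ this gives $\|E\| = 1$. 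The module property $E(abc) = aE(b)c$ for $a, c \in A$ and $b \in D \otimes_p A$ follows from the corresponding property of each $E_i$ (it is part of Definition~\ref{D:CondExpt}, established in Proposition~\ref{P:Avg}) together with the fact that $A = 1 \otimes A$ commutes with the averaging and acts diagonally across blocks. Uniqueness of $E$ is immediate: the elements $\rh(y) a$ with $y \in M_d$, $a \in A$ span a dense subalgebra of $D \otimes_p A$ (by Definition~\ref{D:LpTensorPrd} and the fact that $D = \rh(M_d)$ is finite-dimensional, so $D \otimes_p A$ is the closed span of such elements), and $E$ is continuous.

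Finally I would check that $E$ is ideal preserving. Let $J \subset D \otimes_p A$ be a closed ideal and $b \in J$. On each block, $E_i(b_i) = \frac{1}{\card(G_i)}\sum_{g \in G_i} (\rh_i(g)\otimes 1)\, b\, (\rh_i(g)^{-1}\otimes 1)$ restricted to that block; each conjugating element $\rh_i(g) \otimes 1$ lies in $D_i \otimes_p A \subset D \otimes_p A$ (it is $\rh_i(g)$ with $\rh_i(g)^{-1} = \rh_i(g^{-1})$ also in the algebra, since $g^{-1} \in G_i$), so the conjugate of $b$ stays in $J$, and hence so does the average. Assembling over $i$, $E(b) \in J$; and $E(b) \in A$ by construction, so $E(b) \in J \cap A$. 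The one point requiring a little care — and the likeliest source of friction — is the bookkeeping in the blockwise assembly: making precise that the averaging operators on different blocks are the restrictions of a single bounded operator on $L(L^p(X \times Y, \mu\times\nu))$ preserving $D \otimes_p A$, and that the resulting map is genuinely continuous with norm one rather than merely bounded blockwise. This is handled by noting that the conjugating isometries, while chosen block-by-block, all have the form $(\text{isometry on }L^p(X))\otimes 1$ with norm $1$, so a uniform bound is automatic, and the target always lies in the single copy $1 \otimes A$; the $p$-direct sum structure of $L^p(X,\mu)$ then gives exactly $\|E\| = 1$ rather than a weaker estimate.
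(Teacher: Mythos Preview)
Your construction and the paper's share the same engine---the averaging formula
\[
E(x)=\frac{1}{\card(G)}\sum_{g\in G}(\rho(g)\otimes 1)\,x\,(\rho(g)\otimes 1)^{-1}
\]
for a finite irreducible $G\subset\inv(M_d)$, combined with Lemma~\ref{L_2Y15_IrrEq}---but you run it in the opposite order. You build $E$ blockwise (with the group $G_i$ on block~$i$) and then assemble; the paper defines $E$ once globally, using a single group $G$ (for instance the signed permutations of Definition~\ref{D_2Y16_SgnPerm}), and descends to the blocks only for the norm estimate $\|E\|=1$.

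This reversal produces a genuine gap in your ideal-preserving argument. You assert that ``each conjugating element $\rho_i(g)\otimes 1$ lies in $D_i\otimes_p A\subset D\otimes_p A$,'' but there is no such inclusion: $D_i=\rho_i(M_d)$ lives in $L(L^p(X_i,\mu))$, not in $L(L^p(X,\mu))$, so $\rho_i(g)\otimes 1$ is not an element of $D\otimes_p A$ at all, and conjugating the global $b\in J$ by it is not defined. If you replace $\rho_i(g)$ by the global $\rho(g)$ (which \emph{is} in $D$), the conjugates do lie in $J$, and so does their average
\[
c^{(i)}=\frac{1}{\card(G_i)}\sum_{g\in G_i}(\rho(g)\otimes 1)\,b\,(\rho(g^{-1})\otimes 1).
\]
But now your own first-paragraph observation---that the averaging map does not depend on which irreducible group is used---says $c^{(i)}=E(b)$ \emph{globally}, not merely on block~$i$. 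So there is nothing to ``assemble over $i$'': a single choice of $i$ already exhibits $E(b)$ as an element of~$J$. That is exactly the paper's route: take one global $G$ so that $E$ is visibly a finite sum of conjugates by invertible elements of $D\otimes_p A$ (whence ideal preservation is immediate), and invoke the block decomposition and the groups $G_i$ only afterward, to upgrade the a priori bound $\|E\|\leq\sup_g\|\rho(g)\|\cdot\|\rho(g)^{-1}\|$ to $\|E\|=1$.
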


\begin{proof}
Uniqueness follows from the fact that the elementary tensors
span a dense subset of $D \otimes_p A.$
(See \Def{D:LpTensorPrd}.)

Now let $G \subset \inv (M_d)$ be any finite subgroup
whose natural action on $\C^d$ is irreducible.
For example, see Definition~\ref{D_2Y16_SgnPerm}.
Define $E \colon D \otimes_p A \to D \otimes_p A$
by
\begin{equation}\label{Eq_3715_Star}
E (x) = \frac{1}{\card (G)} \sum_{g \in G}
    (\rh (g) \otimes 1) x (\rh (g) \otimes 1)^{-1}
\end{equation}
for $x \in D \otimes_p A.$
Then $E (\rh (y) a) = \tr (y) a$ for all $a \in A$ and $y \in M_d$
by Lemma~\ref{L_2Y15_IrrEq}(\ref{L_2Y15_IrrEq_Tr}).
Therefore the range of $E$ is contained in~$A.$
Now Remark~\ref{R_2Y16_AvgBddAct}
shows that $E$
has all the properties of a Banach conditional expectation,
except that we do not yet know that $\| E \| = 1.$
This proves existence except for the norm estimate.
Moreover,
$E$ does not depend on the choice of the finite group~$G.$

It remains to estimate $\| E \|.$
Let $X = \coprod_{i \in I} X_{i}$
be a partition of $X$
as in \Def{D_2Y15_EnId}(\ref{D_2Y15_EnId_Loc}),
and let $G_i \subset \inv (M_d)$ be a finite subgroup
whose natural action on $\C^d$ is irreducible
and such that $\rh (g) |_{L^p (X_i, \mu)}$
is isometric for all $g \in G_i.$
Then for every $i \in I,$
the space $L^p (X_i \times Y, \, \mu \times \nu)$
is invariant under the action of $D \otimes_p A.$
Since $L^p (X \times Y, \, \mu \times \nu)$ is
the $L^p$~sum of these subspaces,
it suffices to show that
for every $i \in I$ and $\xi \in L^p (X_i, \mu),$
we have $\| E (x) \xi \| \leq \| x \| \cdot \| \xi \|$
for all $x \in D \otimes_p A.$
Using the fact that $E$ in~(\ref{Eq_3715_Star})
is independent of the choice of~$G$
at the first step
and the fact that $\rh (g) |_{L^p (X_i, \mu)}$is isometric
at the third step,
we have
\begin{align*}
\| E (x) \xi \|
& = \left\| \frac{1}{\card (G_i)} \sum_{g \in G_i}
    (\rh (g) \otimes 1) x (\rh (g) \otimes 1)^{-1} \xi \right\|
       \\
& \leq \frac{1}{\card (G_i)} \sum_{g \in G_i}
    \| (\rh (g) \otimes 1) x (\rh (g) \otimes 1)^{-1} \xi \|
       \\
& \leq \frac{1}{\card (G_i)} \sum_{g \in G_i} \| x \| \cdot \| \xi \|
  = \| x \| \cdot \| \xi \|,
\end{align*}
as desired.
\end{proof}

The following result gives the reason we want
our Banach conditional expectations to be ideal preserving.

\begin{lem}\label{L_2Y15_IdPLim}
Let $A$ be a unital Banach algebra.
Let $A_0 \subset A_1 \subset \cdots \subset A$
be closed unital subalgebras
such that ${\overline{\bigcup_{n = 0}^{\I} A_n}} = A$
and such that there are ideal preserving Banach conditional expectations
$E_n \colon A \to A_n.$
Let $I \subset A$ be a closed ideal.
Then $I = {\overline{\bigcup_{n = 0}^{\I} (I \cap A_n)}}.$
\end{lem}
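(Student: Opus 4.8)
The plan is to prove the two inclusions separately, with the nontrivial direction being $I \subseteq \overline{\bigcup_{n=0}^\infty (I \cap A_n)}$. The inclusion $\supseteq$ is immediate: each $I \cap A_n$ is contained in $I$, and $I$ is closed, so it contains the closure of the union. For the reverse inclusion, let $b \in I$ and fix $\varepsilon > 0$. Since $\overline{\bigcup_n A_n} = A \supseteq I$, choose $n$ and $a \in A_n$ with $\|b - a\| < \varepsilon$. The element $a$ need not lie in $I$, so this is not yet enough; the key idea is to apply the conditional expectation $E_n$ to move $b$ into $A_n$ while staying inside $I$ and without increasing the distance.

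The main step runs as follows. Since $E_n$ is ideal preserving, $E_n(b) \in I \cap A_n$. Since $E_n$ restricts to the identity on $A_n$ (Definition~\ref{D:CondExpt}(\ref{D:CondExpt-3})) and $a \in A_n$, we have $E_n(a) = a$, hence
\[
\| b - E_n(b) \|
  \leq \| b - a \| + \| a - E_n(b) \|
  = \| b - a \| + \| E_n(a - b) \|
  \leq \| b - a \| + \| E_n \| \cdot \| a - b \|.
\]
Because $\| E_n \| = 1$ (Definition~\ref{D:CondExpt}(\ref{D:CondExpt-2})), the right-hand side is at most $2 \| b - a \| < 2\varepsilon$. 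Thus $E_n(b) \in I \cap A_n$ lies within $2\varepsilon$ of $b$. Since $\varepsilon > 0$ was arbitrary, $b \in \overline{\bigcup_n (I \cap A_n)}$, completing the proof.

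I do not expect any serious obstacle here: the lemma is essentially a formal consequence of the defining properties of an ideal preserving norm-one conditional expectation together with the density hypothesis. The only point that requires the full strength of the hypotheses—rather than, say, just uniformly bounded expectations—is the constant in the estimate: with $\|E_n\| = 1$ one gets the clean bound $2\|b-a\|$, but even a uniform bound $\|E_n\| \leq C$ would give $(1+C)\|b-a\|$, which still suffices since the distance $\|b-a\|$ can be taken arbitrarily small. No continuity or compatibility between the $E_n$ for different $n$ is needed, and one does not need the $E_n$ to be compatible with one another or to converge to the identity pointwise.
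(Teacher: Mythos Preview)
Your proof is correct and follows essentially the same approach as the paper. The paper organizes the argument slightly differently---first isolating the claim that $\lim_{n\to\infty} E_n(a) = a$ for every $a \in A$, then applying it to $a \in I$---but the underlying computation (approximate by some $x \in A_n$, use $E_n(x) = x$ and $\|E_n\| \leq 1$) is identical to yours.
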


\begin{proof}
We first claim that for all $a \in A$ we have $\limi{n} E_n (a) = a.$
So let $a \in A$ and let $\ep > 0.$
Choose $N \in \Nz$ and $x \in A_N$ such that
$\| a - x \| < \frac{\ep}{2}.$
Let $n \geq N.$
Then $E_n (x) = x,$
so
\[
\| E_n (a) - a \|
  \leq \| E_n (a - x) \| + \| x - a \|
  \leq \| E_n \| \cdot \| a - x \| + \| x - a \|
  < \tfrac{\ep}{2} + \tfrac{\ep}{2}
  = \ep.
\]
The claim follows.

Now let $a \in I.$
Then for all $n \in \Nz$ we have $E_n (a) \in I \cap A_n,$
and $\limi{n} E_n (a) = a,$
so $a \in {\overline{\bigcup_{n = 0}^{\I} (I \cap A_n)}}.$
\end{proof}

The following proposition gives the properties of the eigenspaces
of an action of a compact abelian group on a Banach algebra.
All these properties are well known for an action on a \ca,
and one really only needs to check that nothing goes wrong
for Banach algebras.

\begin{prp}\label{P:GpEigensp}
Let $G$ be a compact abelian group,
with normalized Haar measure~$\nu$ and Pontryagin dual~${\widehat{G}}.$
Let $B$ be a Banach algebra,
and let $\bt \colon G \to \Aut (B)$ be an action of $G$ on~$B.$
For $\ta \in {\widehat{G}}$ define
\[
B_{\ta} =
 \big\{ b \in B \colon {\mbox{$\bt_g (b) = \ta (g) b$ for all
                        $g \in G$}} \big\}.
\]
Take Banach space valued integration to be as in Proposition~\ref{P:Avg}
or the remark afterwards,
and define $P_{\ta} \colon B \to B$ by
\[
P_{\ta} (b) = \int_G {\ov{\ta (g)}} \bt_g (b) \, d \nu (g)
\]
for $b \in B.$
Then:
\begin{enumerate}
\item\label{P:GpEigensp-1}
The number $M = \sup_{g \in G} \| \bt_g \|$ is finite.
\item\label{P:GpEigensp-2}
With $M$ as in~(\ref{P:GpEigensp-1}),
we have
$\| P_{\ta} \| \leq M$ for all $\ta \in {\widehat{G}}.$
\item\label{P:GpEigensp-3}
${\operatorname{ran}} (P_{\ta}) = B_{\ta}$
for all $\ta \in {\widehat{G}}.$
\item\label{P:GpEigensp-4a}
$P_{\ta} \circ P_{\ta} = P_{\ta}$ for all $\ta \in {\widehat{G}}.$
\item\label{P:GpEigensp-4b}
$P_{\sm} \circ P_{\ta} = 0$ for all $\sm, \ta \in {\widehat{G}}$
with $\sm \neq \ta.$
\item\label{P:GpEigensp-5}
$B_{\ta}$ is a closed subspace of~$B$ for all $\ta \in {\widehat{G}}.$
\item\label{P:GpEigensp-6}
$B_{\sm} B_{\ta} \subset B_{\sm \ta}$
for all $\sm, \ta \in {\widehat{G}}.$
\item\label{P:GpEigensp-7}
For all $\sm, \ta \in {\widehat{G}},$
$c \in B_{\sm},$ and $b \in B,$
we have
$P_{\ta} (b c) = P_{\ta \sm^{-1}} (b) c$
and $P_{\ta} (c b) = c P_{\ta \sm^{-1}} (b).$
\item\label{P:GpEigensp-9}
If $b \in B$ satisfies
$P_{\ta} (b) = 0$ for all $\ta \in {\widehat{G}},$
then $b = 0.$
\end{enumerate}
\end{prp}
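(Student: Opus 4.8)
The plan is to follow the standard template for the analysis of eigenspaces of a compact abelian group action, checking at each point that no ``*-algebra'' or ``inner product'' structure is secretly being used; everything rests only on having a norm-bounded action and the ability to integrate continuous Banach-space-valued functions against Haar measure.

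First I would prove~(\ref{P:GpEigensp-1}): for fixed $b \in B$, the map $g \mapsto \bt_g (b)$ is continuous on a compact space, hence has bounded range, so $\sup_{g \in G} \| \bt_g (b) \| < \I$; the Uniform Boundedness Principle then gives $M = \sup_{g \in G} \| \bt_g \| < \I$ (this is exactly the argument already used in Remark~\ref{R_2Y16_AvgBddAct}). Part~(\ref{P:GpEigensp-2}) is then immediate from $\| \ov{\ta (g)} \| = 1$ and the standard estimate $\| \int_G f \, d\nu \| \leq \int_G \| f \| \, d\nu \leq M \| b \|$ for the vector-valued integral. For~(\ref{P:GpEigensp-3}), I would compute $\bt_h (P_\ta (b)) = \int_G \ov{\ta(g)} \bt_{hg}(b) \, d\nu(g)$, substitute $g \mapsto h^{-1} g$ using translation invariance of Haar measure, and pull out $\ta(h)$ to get $\bt_h (P_\ta(b)) = \ta(h) P_\ta(b)$, so ${\operatorname{ran}}(P_\ta) \subset B_\ta$; conversely, if $b \in B_\ta$ then $P_\ta(b) = \int_G \ov{\ta(g)} \ta(g) b \, d\nu(g) = b$ (using $\int_G d\nu = 1$), which simultaneously gives the reverse inclusion and establishes~(\ref{P:GpEigensp-4a}) on the nose. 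Here one must be slightly careful that $\bt_h$ commutes with the integral, which follows from continuity and linearity of $\bt_h$ together with the characterization of the vector-valued integral by duality.

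Next, (\ref{P:GpEigensp-4b}): for $\sm \neq \ta$, I would write $P_\sm(P_\ta(b))$, use that $P_\ta(b) \in B_\ta$ so $\bt_g(P_\ta(b)) = \ta(g) P_\ta(b)$, and reduce to the scalar integral $\int_G \ov{\sm(g)} \ta(g) \, d\nu(g)$, which is $0$ by orthogonality of distinct characters. Part~(\ref{P:GpEigensp-5}) follows since $B_\ta = {\operatorname{ran}}(P_\ta)$ and $P_\ta$ is an idempotent bounded operator, or directly since $B_\ta = \bigcap_{g \in G} \ker(\bt_g - \ta(g))$ is an intersection of kernels of bounded operators. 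Part~(\ref{P:GpEigensp-6}) is a direct computation: if $c \in B_\sm$, $b \in B_\ta$, then $\bt_g(cb) = \bt_g(c)\bt_g(b) = \sm(g)\ta(g) cb = (\sm\ta)(g) cb$, using that $\bt_g$ is an algebra automorphism. For~(\ref{P:GpEigensp-7}), with $c \in B_\sm$ and $b \in B$ arbitrary, I would compute $P_\ta(bc) = \int_G \ov{\ta(g)} \bt_g(b)\bt_g(c) \, d\nu(g) = \int_G \ov{\ta(g)}\sm(g) \bt_g(b) \, d\nu(g) \cdot c = P_{\ta\sm^{-1}}(b) c$, pulling the (continuous) right-multiplication by $c$ out of the integral; the other identity is symmetric.

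Finally~(\ref{P:GpEigensp-9}): if $P_\ta(b) = 0$ for all $\ta \in \widehat{G}$, I want $b = 0$. The usual argument is that the finite partial sums $\sum_{\ta \in F} P_\ta(b)$ (for finite $F \subset \widehat{G}$) converge to $b$, via an approximate-identity / Fej\'er-type argument using that trigonometric polynomials are dense in $C(G)$ together with continuity of $g \mapsto \bt_g(b)$. Concretely: given $\ep > 0$, by continuity pick a neighborhood of $1$ on which $\| \bt_g(b) - b \| < \ep$, choose a nonnegative trigonometric polynomial $u = \sum_{\ta} c_\ta \ta$ with $\int_G u \, d\nu = 1$ concentrated near $1$, and note $\int_G u(g) \bt_g(b) \, d\nu(g)$ is within a constant multiple of $\ep$ of $b$ while also equalling $\sum_\ta c_\ta' P_{\bar\ta}(b)$ for suitable coefficients, hence $0$. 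I expect this last part to be the main obstacle, since it is the one place that genuinely requires harmonic analysis on $G$ (existence of a suitable approximate identity of positive-definite trigonometric polynomials) rather than a formal manipulation of the integral; everything before it is routine once~(\ref{P:GpEigensp-1}) is in hand. An alternative for~(\ref{P:GpEigensp-9}) that avoids building the approximate identity explicitly: apply a continuous linear functional $\ld \in B^*$, so that $g \mapsto \ld(\bt_g(b))$ is a continuous scalar function on $G$ all of whose Fourier coefficients $\int_G \ov{\ta(g)} \ld(\bt_g(b)) \, d\nu(g) = \ld(P_\ta(b)) = 0$ vanish; uniqueness of Fourier coefficients forces $\ld(\bt_g(b)) = 0$ for all $g$, in particular $\ld(b) = 0$, and since $\ld$ is arbitrary, Hahn--Banach gives $b = 0$. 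I would likely use this functional-analytic route, as it is cleaner and keeps the proof self-contained.
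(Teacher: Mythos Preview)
Your proposal is correct and follows essentially the same route as the paper's proof: the Uniform Boundedness Principle for~(\ref{P:GpEigensp-1}), the translation-invariance computation for~(\ref{P:GpEigensp-3}) and~(\ref{P:GpEigensp-4a}), orthogonality of characters for~(\ref{P:GpEigensp-4b}), and the direct computations for (\ref{P:GpEigensp-5})--(\ref{P:GpEigensp-7}) all match. For~(\ref{P:GpEigensp-9}), the paper uses precisely your second (``functional-analytic'') alternative---pair with $\om \in B'$, observe the resulting scalar function on~$G$ has vanishing Fourier transform, conclude via Hahn--Banach---so your instinct to prefer that route over the Fej\'er-type argument is the right one.
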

\begin{proof}
Part~(\ref{P:GpEigensp-1}) is immediate from the
Uniform Boundedness Principle.
Part~(\ref{P:GpEigensp-2}) is immediate from~(\ref{P:GpEigensp-1}).

Next, we observe that
for $\ta \in {\widehat{G}},$ $g \in G,$ and $b \in B,$
we have
\begin{equation}\label{Eq:gPtau}
\bt_g (P_{\ta} (b))
  = \int_G {\ov{\ta (h)}} \bt_{g h} (b) \, d \nu (h)
  = \int_G {\ov{\ta (g^{-1} h)}} \bt_{h} (b) \, d \nu (h)
  = \ta (g) P_{\ta} (b).
\end{equation}
So ${\operatorname{ran}} (P_{\ta}) \subset B_{\ta}.$
For $\ta \in {\widehat{G}}$ and $b \in B_{\ta},$
one immediately checks that $P_{\ta} (a) = a.$
Therefore ${\operatorname{ran}} (P_{\ta}) = B_{\ta},$
which is~(\ref{P:GpEigensp-3}),
and $P_{\ta} \circ P_{\ta} = P_{\ta},$
which is~(\ref{P:GpEigensp-4a}).
Moreover,
if $\sm \in {\widehat{G}},$
then (\ref{Eq:gPtau}) implies that
\[
P_{\sm} (P_{\ta} (b))
  = \int_G {\ov{\sm (h)}} \bt_{h} ( P_{\ta} (b)) \, d \nu (h)
  = \left( \int_G {\ov{\sm (h)}} \ta (h) \, d \nu (h) \right) P_{\ta} (b),
\]
which is zero if $\sm \neq \ta.$
This is~(\ref{P:GpEigensp-4b}).

Part~(\ref{P:GpEigensp-5}) is immediate from continuity
of the automorphisms $\bt_g,$
and Part~(\ref{P:GpEigensp-6}) is immediate from the fact that
each $\bt_g$ is a \hm.

For part~(\ref{P:GpEigensp-7}), we compute:
\begin{align*}
P_{\ta} (b c)
& = \int_G {\ov{\ta (g)}} \bt_g (b c) \, d \nu (g)
         \\
& = \int_G {\ov{\ta (g)}} \bt_g (b) \sm (g) c \, d \nu (g)
  = \left( \int_G
      {\ov{\ta (g) \sm^{-1} (g)}} \bt_g (b) \, d \nu (g) \right) c
  = P_{\ta \sm^{-1}} (b) c.
\end{align*}
The part involving $P_{\ta} (c b)$ is done similarly.

Finally, we prove Part~(\ref{P:GpEigensp-9}).
Suppose $P_{\ta} (b) = 0$ for all $\ta \in {\widehat{G}}.$
Let $B'$ be the Banach space dual of~$B.$
Let $\om \in B'$ be arbitrary.
Then for all $\ta \in {\widehat{G}}$ we have
\[
0 = \om ( P_{\ta} (b))
  = \int_G {\ov{\ta (g)}} \om (\bt_g (b)) \, d \nu (g).
\]
So $g \mapsto f_{\om} (g) = \om (\bt_g (b))$ is a \cfn{} on $G$
whose Fourier transform ${\widehat{f_{\om}}}$
vanishes on ${\widehat{G}}.$
Therefore $f_{\om} = 0.$
Since this is true for all $\om \in B',$
the Hahn-Banach Theorem implies that $\bt_g (b) = 0$
for all $g \in G.$
In particular, $b = 0,$
as desired.
\end{proof}

\section{$L^p$~UHF algebras}\label{Sec_N_UHF}

\indent
We define an $L^p$~version of the well known
family of UHF \ca s.
We need these algebras and some of their properties
to prove simplicity of the $L^p$~Cuntz algebras,
just as certain UHF \ca s were used in the proof~\cite{Cu1}
that the \ca{} ${\mathcal{O}}_d$ is simple.
We actually only need simplicity of the spatial $L^p$~UHF algebras
(Definition~\ref{D_2Y15_ITP} below).
However,
with some preparation,
the same argument gives simplicity for a much larger class.

The key step in the proof of simplicity is the part of the proof
of Lemma~\ref{L:UHFCondExpt}
which shows that $E_{R, S}$ is ideal preserving
when $S \setminus R$ is not finite.

When $p = 2,$
a spatial $L^p$~UHF algebra is just a UHF \ca,
as originally introduced in~\cite{Gl}.

In this section,
we prove the following results about $L^p$~UHF algebras:
\begin{itemize}
\item
For every supernatural number~$N$
(Definition~\ref{D_2Y15_SNat}) and every $p \in [1, \I),$
there is a spatial $L^p$~UHF algebra of type~$N,$
and it can be represented isometrically on $L^p (X, \mu)$
for some \sfm{} $\XBM.$
(See Theorem~\ref{T_2Y15_SpEx}.)
\item
Every unital contractive \hm{} from a spatial $L^p$~UHF algebra
to $\LLp,$
for any \sfm{} $\XBM,$ is isometric.
(See Theorem \ref{T_2Y15_SpEx}(\ref{T_2Y15_SpEx_Contr}).)
\item
For a fixed supernatural number~$N$ and fixed $p \in [1, \I),$
any two spatial $L^p$~UHF algebras of type~$N$
are isometrically isomorphic.
(See Theorem~\ref{T:UHFIsoCrit},
taking $D = \C.$)
\item
For every $p \in [1, \I),$
any $L^p$~UHF algebra of tensor product type
which locally has enough isometries
(Definition~\ref{D_2Y15_ITP}) is simple (Theorem~\ref{T:LpUHFSimple})
and has a unique normalized trace (Corollary~\ref{C:LpUHFTrace}).
(This class includes the spatial $L^p$~UHF algebras.)
\item
Every spatial $L^p$~UHF algebra is an amenable Banach algebra
in the sense of Definition 2.1.9 of~\cite{Rnd}.
(See Theorem~\ref{T_3714Amen}.)
\end{itemize}

In~\cite{Ph-Lp2b},
we will show that general $L^p$~UHF algebras need not be amenable.

The result we need later is simplicity.
However,
the other results are easy to get,
so it seems appropriate to include them.

We will need direct systems of Banach algebras with contractive
\hm s indexed by~$\Nz.$

\begin{dfn}\label{D:DirSys}
A {\emph{contractive direct system of Banach algebras}}
is a pair
\[
\big( (A_n)_{n \in \Nz}, \, ( \ph_{n, m} )_{m \leq n} \big)
\]
consisting of a family $(A_n)_{n \in \Nz}$ of Banach algebras
and a family $( \ph_{n, m} )_{m \leq n}$ of contractive
\hm s $\ph_{n, m} \colon A_m \to A_n$
for $m, n \in \Nz$ with $m \leq n,$
such that $\ph_{n, n} = \id_{A_n}$ for all $n \in \Nz$
and $\ph_{n, m} \circ \ph_{m, l} = \ph_{n, l}$
whenever $l \leq m \leq n.$

The {\emph{direct limit}} $\dirlim A_n$ of this direct system is the
Banach algebra direct (``inductive'') limit,
as constructed in much greater generality
in Section~3.3 of~\cite{Bl3}.
\end{dfn}

\begin{rmk}\label{R-DlimUP}
Let the notation be as in \Def{D:DirSys},
and let $A = \dirlim A_n.$
Then $A$ has the usual universal property.
It is not stated in~\cite{Bl3}, but is easily proved;
the equivariant
\ca{} proof in Proposition 2.5.1 of~\cite{Ph0} is very similar.
Here, since our maps are contractive,
it takes the following form.
Suppose we are given a Banach algebra~$B$ 
and contractive \hm s $\ps_n \colon A_n \to B$
for $n \in \Nz$
such that $\ps_n \circ \ph_{n, m} = \ps_m$
for $m, n \in \Nz$ with $m \leq n.$
Then there exists a unique contractive \hm{} $\ps \colon A \to B$
such that $\ps \circ \ph_n = \ps_n$ for all $n \in \Nz.$
\end{rmk}

We only need the situation in which all the maps $\ph_{n, m}$
are isometric,
in which case it is obvious that we in fact get a norm on
the algebraic direct limit,
and that the Banach algebra direct limit
is just the completion in this norm.

If $A$ is a Banach algebra and
$A_0 \subset A_1 \subset A_2 \subset \cdots$
is a sequence of closed subalgebras
such that $A = {\overline{\bigcup_{n = 0}^{\I} A_n}},$
and for $m, n \in \Nz$ with $m \leq n$
we take $\ph_{n, m} \colon A_m \to A_n$ to be the inclusion,
it is clear that $A = \dirlim A_n.$

Part of the following definition
is contained in Definition~1.3 of~\cite{Gl}.

\begin{dfn}\label{D_2Y15_SNat}
Let $P$ be the set of prime numbers.
A {\emph{supernatural number}} is a function
$N \colon P \to \N \cup \{ \infty \}$
such that $\sum_{t \in P} N (t) = \infty.$

Let $d = (d (1), \, d (2), \, \ldots )$
be a sequence of integers such that $d (n) \geq 2$ for all $n \in \N.$
We define a new sequence
$r_d$ by
\[
r_d (n) = d (1) d (2) \cdots d (n)
\]
for $n \in \Nz.$
(Thus, $r_d (0) = 1.$)
We define a function
$N_d \colon P \to \Nz \cup \{ \I \}$
by
\[
N_d (t) = \sup \big( \big\{ k \in \Nz \colon
        {\mbox{there is $n \in \Nz$ such that $t^k$ divides $r_d (n)$}}
              \big\} \big).
\]
The function $N_d$ is called the {\emph{supernatural number
associated with~$d$}}.

If $m \in \{ 2, 3, 4, \ldots \},$ and if $N (t) = \infty$
for those primes $t$ which divide~$m$ and $N (t) = 0$ for all
other primes~$t,$
we write $m^{\I}$ for~$N.$
\end{dfn}

\begin{dfn}\label{D_2Y14_pUHFTypeN}
Let $\XBM$ be a \sfm,
let $p \in [1, \infty),$
and let $A \subset \LLp$ be a unital subalgebra.
Let $N$ be a supernatural number.
We say that $A$ is an
{\emph{$L^p$~UHF algebra of type~$N$}}
if there exist a sequence $d$ as in Definition~\ref{D_2Y15_SNat}
with $N_d = N,$
unital subalgebras $D_0 \subset D_1 \subset \cdots \subset A,$
and,
with $r_d$ as in Definition~\ref{D_2Y15_SNat},
algebraic isomorphisms $\sm_n \colon M_{r_d (n)} \to D_n,$
such that $A = {\overline{\bigcup_{n = 0}^{\I} D_n}}.$
We say that $A$ {\emph{dominates the spatial norm}}
if,
using the norm from $\MP{r_d (n)}{p},$
the algebraic isomorphisms $\sm_n$ above
can be chosen such that:
\begin{enumerate}
\item\label{D_2Y14_pUHFTypeN_Ineq}
$\| \sm_n (a) \| \geq \| a \|$
for all $a \in M_{r_d (n)}.$
\item\label{D_2Y14_pUHFTypeN_Cons}
Identifying $M_{r_d (n + 1)}$ with $M_{r_d (n)} \otimes M_{d (n + 1)},$
we have $\sm_{n + 1} (a \otimes 1) = \sm_n (a)$
for all $a \in M_{r_d (n)}.$
\end{enumerate}
\end{dfn}

Essentially,
an $L^p$~UHF algebra is a direct limit
of matrix algebras with unital maps which is isometrically
isomorphic to an algebra of operators on some space
of the form $L^p (X, \mu).$

We think of domination of the spatial norm
as a normalization.
In Example~\ref{E_2Y14_LpUHF} below,
in which we describe
$L^p$~UHF algebras of tensor product type,
the obvious source of the representations of the tensor
factors $M_{d (n)}$ on $L^p$-spaces
is Example~\ref{E_2Y15_CptEn},
in which for $a \in M_n$
we got $\| \rh (a) \| = \sup_{u \in K} \| u a u^{-1} \|$
for a fixed compact set $K \subset \inv (M_d).$
For domination of the spatial norm,
it suffices to require $1 \in K.$

\begin{dfn}\label{D:StdUHF}
Let $p \in [1, \infty)$
and let $N$ be a supernatural number.
A
{\emph{spatial direct system of type~$N$}}
is any direct system
$\big( (A_n)_{n \in \Nz}, \, ( \ph_{n, m} )_{m \leq n} \big)$
of Banach algebras such that
the maps $\ph_{n, m}$ are all isometric and unital,
and for which
there exist a sequence $d$ as in Definition~\ref{D_2Y15_SNat}
with $N_d = N$
and isometric isomorphisms $A_n \cong \MP{r_d (n)}{p}.$
We can always assume that $A_0 = \C.$
\end{dfn}

The term ``spatial'' is used because, for $p \neq 2,$
the condition on the maps
$\MP{r_d (n - 1)}{p} \to \MP{r_d (n)}{p}$
implicit in the definition is exactly that they are spatial.
See Theorem~\ref{T_2Y17Iso}.
For $p = 2,$
Lemma~\ref{L_3717_CSt} implies that a spatial direct system
is a direct system of \ca{s} and *-\hm{s}.

The direct limit of a spatial direct system of type~$N$
is in fact an $L^p$~UHF algebra of type~$N,$
but at the moment it is not obvious
that such a direct limit can be represented as an algebra of operators
on an $L^p$~space.

We can use the usual intertwining argument for \ca s
(see, for example, the proof of Theorem~4.3 of~\cite{Ell1})
to prove that the direct limits of any two spatial
direct systems of the same type are isometrically isomorphic.
We need a lemma.

\begin{lem}\label{L:IsosAreEq}
Let $r, s \in \N,$ and suppose $r$ divides~$s.$
Let $p \in [1, \I).$
Then:
\begin{enumerate}
\item\label{L:IsosAreEq-Ex}
There is a unital isometric \hm{}
$\rh \colon \MP{r}{p} \to \MP{s}{p}.$
\item\label{L:IsosAreEq-Uq}
Let $\ph, \ps \colon \MP{r}{p} \to \MP{s}{p}$
be unital isometric \hm s.
Then there exists an isometry $u \in \MP{s}{p}$
such that $\ps (a) = u \ph (a) u^{-1}$ for all $a \in \MP{r}{p}.$
\end{enumerate}
\end{lem}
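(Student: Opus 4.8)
The plan is to prove part~(\ref{L:IsosAreEq-Ex}) by an explicit construction and part~(\ref{L:IsosAreEq-Uq}) by reducing to the classical matrix-algebra statement that any two unital algebra homomorphisms $M_r \to M_s$ are conjugate by an invertible element, and then upgrading the conjugating element to an isometry using the spatial theory. For~(\ref{L:IsosAreEq-Ex}), write $s = r k$ and use Corollary~\ref{C_2Y21MpTens} to identify $\MP{s}{p} \cong \MP{r}{p} \otimes_p \MP{k}{p}$ isometrically. Then $a \mapsto a \otimes 1$ is a unital isometric \hm{} $\MP{r}{p} \to \MP{s}{p}$ by Proposition~\ref{P:TProdOfAlgs}(\ref{P:TProdOfAlgs-2}). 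This settles existence.

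For~(\ref{L:IsosAreEq-Uq}), first I would recall the purely algebraic fact: since $\ph$ and $\ps$ are unital \hm s $M_r \to M_s$ (forgetting norms), each makes $l^p_s = \C^s$ into a module over $M_r \cong M_r$, and every such module is a direct sum of copies of the standard module $\C^r$; comparing dimensions, both $\ph$ and $\ps$ give $\C^s \cong (\C^r)^{\oplus k}$ with $k = s/r$, so there is $v \in \inv(M_s)$ with $\ps(a) = v\,\ph(a)\,v^{-1}$ for all $a \in M_r$. This $v$ need not be isometric on $l^p_s$, so the work is to replace it. The idea is that $\ph$ and $\ps$, being isometric on $\MP{r}{p}$, are spatial representations of $M_r$ on $l^p_s$ (by Theorem~\ref{T_2Y17Iso} for $p \neq 2$, and by Lemma~\ref{L_3717_CSt} together with the standard C*-theory for $p = 2$). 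Two spatial representations of $M_r$ on the same $L^p$-space of the same "multiplicity" $k$ should be spatially equivalent — i.e.\ intertwined by an isometric bijection $u \in \MP{s}{p}$ — and this is exactly the kind of statement proved in Section~7 of~\cite{Ph5} about spatial representations of matrix algebras. So I would invoke the relevant structure result from~\cite{Ph5}: a spatial representation of $M_r$ decomposes the $L^p$-space as an $L^p$-tensor product of $l^p_r$ (carrying the standard representation) with a "multiplicity" $L^p$-space, canonically up to an isometric isomorphism of the multiplicity space; matching the two multiplicity spaces (both $l^p_k$) then produces the desired isometry $u$ conjugating $\ph$ to $\ps$.

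The main obstacle I anticipate is pinning down precisely which result of~\cite{Ph5} delivers the spatial uniqueness of representations of $M_r$ with prescribed multiplicity, and checking that "isometric on $\MP{r}{p}$" is strong enough to force the clean tensor-product normal form (rather than merely some contractive or bounded decomposition). For $p \neq 2$ this is where Theorem~\ref{T_2Y17Iso} and the spatial systems of matrix units from~\cite{Ph5} do the real work; the $p = 2$ case is easy since there all unital \hm s between finite-dimensional C*-algebras with the same numerical invariants are unitarily equivalent. A secondary, purely bookkeeping point is to confirm that the isometric bijection $u$ built from the multiplicity identification is genuinely an element of $\MP{s}{p} = L(l^p_s)$ intertwining the two \hm s on all of $M_r$, not just on a spanning set — but that follows by continuity and linearity once it holds on matrix units.
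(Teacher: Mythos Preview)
Your proposal is correct and follows essentially the same route as the paper: the construction for part~(\ref{L:IsosAreEq-Ex}) is the same $a \mapsto a \otimes 1$ map, and for part~(\ref{L:IsosAreEq-Uq}) the paper (for $p \neq 2$) also reduces to the spatial structure theory in Section~7 of~\cite{Ph5}, invoking specifically the implication (1)$\Rightarrow$(8) of Theorem~7.2 there, which is exactly the tensor-product normal form for spatial representations that you described. Your preliminary detour through algebraic conjugacy by a merely invertible $v$ is unnecessary but harmless; the paper skips it, reduces by transitivity to the case where $\ph$ is the standard embedding $\rh$ from part~(\ref{L:IsosAreEq-Ex}), and goes straight to the spatial result.
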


\begin{proof}
The result is well known for $p = 2,$
so assume $p \neq 2.$
To construct~$\rh$ in~(\ref{L:IsosAreEq-Ex}),
let
\[
X = \{ 1, 2, \ldots, r \}
\andeqn
Y = \{ 1, 2, \ldots, s / r \}.
\]
Then $l_r^p = l^p (X)$
and $l_s^p$ is isometrically isomorphic to $l^p (X \times Y).$
We take $\rh$ to be \hm{}
\[
\rh \colon L (l^p (X)) \to L \big( l^p (X \times Y ) \big)
\]
given by $\rh (a) = a \otimes 1.$
This \hm{} is isometric by
Theorem~\ref{T-LpTP}(\ref{T-LpTP-3b}).

To prove~(\ref{L:IsosAreEq-Uq}), it suffices to take $\ph = \rh.$
The existence of~$u$ then follows from
(1) implies~(8)
of Theorem~7.2 of~\cite{Ph5}.
\end{proof}

In the following theorem,
we prove uniqueness not only for the spatial $L^p$~UHF algebra
of type~$N$
(as we will later call the algebra $A$ in the theorem)
but also for its tensor product with
a unital subalgebra $D \subset L (L^p (Y, \nu)).$
The extra generality will be used elsewhere.
One can eliminate most of the first part of the proof,
and get the result needed for this paper,
by taking $D = \C.$

\begin{thm}\label{T:UHFIsoCrit}
Let $p \in [1, \infty)$
and let $N$ be a supernatural number.
Let
\[
\big( (A_n)_{n \in \Nz}, \, ( \ph_{n, m} )_{m \leq n} \big)
\andeqn
\big( (B_n)_{n \in \Nz}, \, ( \ps_{n, m} )_{m \leq n} \big)
\]
be spatial direct systems of type~$N,$
as in Definition~\ref{D:StdUHF},
with $A_0 = B_0 = \C$
and with direct limits $A = \dirlim A_n$ and $B = \dirlim B_n.$
Let $\YCN$ be a \sfm,
and let $D \subset L (L^p (Y, \nu))$ be a unital subalgebra.
Then:
\begin{enumerate}
\item\label{T:UHFIsoCrit_DS}
There are direct systems with isometric maps
\[
\big( (A_n \otimes_p D)_{n \in \Nz}, \,
 ( \ph_{n, m} \otimes_p \id_D )_{m \leq n} \big)
\andeqn
\big( (B_n \otimes_p D)_{n \in \Nz}, \,
 ( \ps_{n, m} \otimes_p \id_D )_{m \leq n} \big).
\]
\item\label{T:UHFIsoCrit_FromAB}
The maps $\sm_n \colon A_n \to A_n \otimes_p D$
and $\ta_n \colon B_n \to B_n \otimes_p D,$
given by $x \mapsto x \otimes 1_D$
for $x \in A_n$ and $x \in B_n,$
combine to give isometric injections
$\sm \colon A \to \dirlim A_n \otimes_p D$
and $\ta \colon B \to \dirlim B_n \otimes_p D.$
\item\label{T:UHFIsoCrit_FD}
The maps $\ld_n \colon D \to A_n \otimes_p D$
and $\et_n \colon D \to B_n \otimes_p D,$
given by $y \mapsto 1 \otimes y$
for $y \in D,$
combine to give isometric injections
$\ld \colon D \to \dirlim A_n \otimes_p D$
and $\et \colon D \to \dirlim B_n \otimes_p D.$
\item\label{T:UHFIsoCrit_Iso}
There is an isometric isomorphism
$\gm \colon \dirlim A_n \otimes_p D \to \dirlim B_n \otimes_p D$
such that $\gm \circ \ld = \et$
and $\gm (\sm (A)) = \ta (B).$
\end{enumerate}
\end{thm}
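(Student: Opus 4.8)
The plan is to prove the four parts in order, with the bulk of the work being routine once the right framework is set up; the substantive content is the intertwining argument in part~(\ref{T:UHFIsoCrit_Iso}).

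For part~(\ref{T:UHFIsoCrit_DS}), I would argue that $\ph_{n,m}\otimes_p\id_D$ and $\ps_{n,m}\otimes_p\id_D$ are isometric homomorphisms. Each $\ph_{n,m}$ is a unital isometric representation of $A_m\cong\MP{r_d(m)}{p}$ (and similarly on the $B$ side), so by Corollary~\ref{C_2Y21MpTens} (applied with $\sm$ the given representation $D\subset L(L^p(Y,\nu))$, after noting $D$ is a quotient only in the trivial sense and what we really need is that the tensor product of an isometric unital representation of a matrix algebra with any bounded representation is bounded below by the product of the norms, hence isometric here since both factors are isometric)—more precisely, since $A_m$ is a matrix algebra, $\ph_{n,m}$ is spatial by Theorem~\ref{T_2Y17Iso}, and then $\ph_{n,m}\otimes_p\id_D$ restricted to $\MP{r_d(m)}{p}\otimes_p\C\cong\MP{r_d(m)}{p}$ is isometric while on elementary tensors $a\otimes d$ the norm is controlled by Theorem~\ref{T-LpTP}(\ref{T-LpTP-3b}); the clean way is to observe $\ph_{n,m}\otimes_p\id_D$ is conjugation by a fixed isometry $u\otimes 1$ coming from the proof of Lemma~\ref{L:IsosAreEq}, hence isometric. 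The compatibility relations $\ph_{n,m}\otimes_p\id_D\circ\ph_{m,l}\otimes_p\id_D=\ph_{n,l}\otimes_p\id_D$ and the identity at $n=m$ are immediate, so we have contractive (indeed isometric) direct systems in the sense of Definition~\ref{D:DirSys}.

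For parts~(\ref{T:UHFIsoCrit_FromAB}) and~(\ref{T:UHFIsoCrit_FD}): the maps $\sm_n$ and $\ld_n$ are isometric unital homomorphisms by Proposition~\ref{P:TProdOfAlgs}(\ref{P:TProdOfAlgs-2}), and they intertwine the connecting maps ($\sm_{n}\circ\id$ on the left versus $(\ph_{n,m}\otimes_p\id_D)\circ\sm_m$ — these agree because $\ph_{n,m}$ is unital on $A$ and acts as the identity on the $D$ factor), so by the universal property in Remark~\ref{R-DlimUP} they induce contractive homomorphisms $\sm\colon A\to\dirlim A_n\otimes_p D$ and $\ld\colon D\to\dirlim A_n\otimes_p D$, and similarly on the $B$ side. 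Injectivity and isometry of $\sm$ follow because each $\sm_n$ is isometric and the direct limit norm is the limit (here, since the maps are isometric, the direct limit is literally the completion of the union, so an isometry on each stage passes to an isometry on the union). For $\ld$, injectivity is subtler since $\ld_n$ is not eventually the identity; I would note that $\ld$ has a left inverse built from the conditional expectations $E_n\otimes_p\id_D$, or more directly: since $A_n=\MP{r_d(n)}{p}$ carries the rank-one projection $e_{11}$, the compressions $\ld_n(y)\mapsto (e_{11}\otimes 1)\ld_n(y)(e_{11}\otimes 1)$ recover $1\otimes y$ isometrically up to the scalar corner, showing $\|\ld(y)\|\ge\|y\|$; combined with contractivity this gives $\ld$ isometric.

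The main work is part~(\ref{T:UHFIsoCrit_Iso}): the Elliott intertwining argument. Set $\tilde A_n=A_n\otimes_p D$, $\tilde B_n=B_n\otimes_p D$. Since $N_{d}=N_{d'}=N$ for the two sequences defining the systems, passing to a common refinement of the index sets I may assume $r_d(n)\mid r_{d'}(n+1)$ and $r_{d'}(n)\mid r_d(n+1)$ for all $n$ (the standard supernatural-number bookkeeping: $r_d$ and $r_{d'}$ have the same set of prime-power divisors up to arbitrarily large exponents, so one may interleave). Then by Lemma~\ref{L:IsosAreEq}(\ref{L:IsosAreEq-Ex}) there are unital isometric homomorphisms $\MP{r_d(n)}{p}\to\MP{r_{d'}(n+1)}{p}$ and $\MP{r_{d'}(n)}{p}\to\MP{r_d(n+1)}{p}$; tensoring with $\id_D$ gives unital isometric maps $\alpha_n\colon\tilde A_n\to\tilde B_{n+1}$ and $\beta_n\colon\tilde B_n\to\tilde A_{n+1}$. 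These need not intertwine the connecting maps, but by Lemma~\ref{L:IsosAreEq}(\ref{L:IsosAreEq-Uq}) (any two unital isometric homomorphisms between the relevant matrix algebras are conjugate by an isometry, and tensoring the conjugating isometry $u$ with $1_D$ keeps it an isometry in $\tilde B_{n+1}$ etc.) one can correct $\alpha_n,\beta_n$ inductively by inner automorphisms so that the resulting maps form a commuting ladder. This is exactly the mechanism of the proof of Theorem~4.3 of~\cite{Ell1}: at each stage one conjugates the next map to make the square commute, using that the conjugating isometry lies in the target algebra. The corrected ladder passes to a pair of mutually inverse isometric isomorphisms $\gm\colon\dirlim\tilde A_n\to\dirlim\tilde B_n$ and $\gm^{-1}$ in the limit (using Remark~\ref{R-DlimUP}); because every $\alpha_n,\beta_n$ restricts to the identity on the common $D$ factor and the conjugating isometries have the form $u\otimes 1_D$, $\gm$ commutes with the embeddings of $D$, giving $\gm\circ\ld=\et$, and it carries $\sm(A)=\ov{\bigcup A_n\otimes 1_D}$ onto $\ta(B)=\ov{\bigcup B_n\otimes 1_D}$ since each stage is mapped into the corresponding stage. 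The hard part, and the only place requiring genuine care rather than bookkeeping, is verifying that the conjugating isometries can be chosen to have the product form $u\otimes 1_D$ so that the $D$-equivariance survives — this uses that the two unital isometric maps being compared differ only in the matrix-algebra tensor leg, so Lemma~\ref{L:IsosAreEq}(\ref{L:IsosAreEq-Uq}) applied in that leg produces $u$, and $u\otimes 1_D$ does the job by Theorem~\ref{T-LpTP}(\ref{T-LpTP-4}).
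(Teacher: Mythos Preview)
Your proposal is correct and follows essentially the same approach as the paper: both use the structure of isometric representations of $\MP{k}{p}$ (via Lemma~\ref{L:IsosAreEq}) to see that tensoring with $\id_D$ preserves isometry, and both run the Elliott intertwining at the matrix-algebra level with conjugating isometries of the form $u\otimes 1_D$.

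One remark: your treatment of part~(\ref{T:UHFIsoCrit_FD}) is more complicated than necessary. There is no subtlety in the isometry of~$\ld$. Each $\ld_n\colon D\to A_n\otimes_p D$ is isometric by Proposition~\ref{P:TProdOfAlgs}(\ref{P:TProdOfAlgs-2}), and since all connecting maps $\ph_{n,m}\otimes_p\id_D$ are isometric (by part~(\ref{T:UHFIsoCrit_DS})), the canonical map $A_n\otimes_p D\to\dirlim A_k\otimes_p D$ is isometric for every~$n$. The composite $D\xrightarrow{\ld_n} A_n\otimes_p D\to\dirlim A_k\otimes_p D$ is therefore isometric, and this composite \emph{is}~$\ld$ (independent of~$n$ by compatibility). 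No conditional expectation or corner compression is needed; the paper handles this in one line via Theorem~\ref{T-LpTP}(\ref{T-LpTP-3b}). Your worry that ``$\ld_n$ is not eventually the identity'' is misplaced: what matters is that the induced map on the limit factors through any single stage, and at each stage it is isometric.
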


\begin{proof}
By \Def{D:StdUHF},
there are sequences
\[
c = (c (1), \, c( 2), \, \ldots )
\andeqn
d = (d (1), \, d (2), \, \ldots )
\]
in $\{ 2, 3, \ldots \}$
such that $N_c = N_d = N,$
and
such that $A_n$ is isometrically isomorphic to $\MP{r_c (n)}{p}$
and $B_n$ is isometrically isomorphic to $\MP{r_d (n)}{p}$
for all $n \in \Nz.$

For $k \in \N,$
we define a norm on $\MP{k}{p} \otimes_{\text{alg}} D$
by representing it on $l_k^p \otimes_p L^p (Y, \nu)$
following Notation~\ref{N:FDP}
and Definition~\ref{D:LpTensorPrd}.
We write $\MP{k}{p} \otimes_p D$
for $\MP{k}{p} \otimes_{\text{alg}} D$ equipped with this norm.

Let $k \in \N.$
Set $N_k = \{ 1, 2, \ldots, k \},$
and let $\om$ be counting measure on~$N_k.$
Let $\XBM$ be a \sfm. 
We claim that a unital representation
$\rh \colon \MP{k}{p} \to \LLp$
is isometric if and only if
there exists a \sfm{} $(X_0, {\mathcal{B}}_0, \mu_0)$
and a bijective isometry
\[
u \colon L^p ( N_k \times X_0, \, \om \times \mu_0) \to L^p (X, \mu)
\]
such that,
identifying $L^p ( X_0 \times N_k, \, \mu_0 \times \om)$
with $L^p (X_0, \mu_0) \otimes_p l_k^p,$
for all $a \in \MP{k}{p}$ we have
$\rh (a) = u \big( 1_{L^p (X_0, \mu_0)}  \otimes a \big) u^{-1}.$
To prove the claim for $p \neq 2,$
use Theorem~\ref{T_2Y17Iso}
(part of Theorem~7.2 of~\cite{Ph5})
to see that $\rh$ is spatial,
and then use condition~(8) in Theorem~7.2 of~\cite{Ph5}.
For $p = 2,$
use Lemma~\ref{L_3717_CSt}
and the fact that any unital *-representation of $M_k$
on a Hilbert space is a multiple of the identity representation.

We now claim that for any \sfm{} $\XBM$
and any isometric representation
$\rh \colon \MP{k}{p} \to \LLp,$
the map
\[
\rh \otimes_p \id_D \colon
       \MP{k}{p} \otimes_p D
       \to \rh ( \MP{k}{p} ) \otimes_p D
       \subset L \big( L^p (X \times Y, \, \mu \times \nu) \big)
\]
is isometric.
Let $(X_0, {\mathcal{B}}_0, \mu_0)$ and $u$ be as in the previous claim.
Then
\[
(\rh \otimes_p \id_D) (x)
    = \big( u \otimes 1_{L^p (Y, \nu)} \big)
         \big( 1_{L^p (X_0, \mu_0)} \otimes x \big)
         \big( u^{-1} \otimes 1_{L^p (Y, \nu)} \big)
\]
for $x \in \MP{k}{p} \otimes_p D.$
It follows from Theorem \ref{T-LpTP}(\ref{T-LpTP-3b})
that $x \mapsto 1_{L^p (X_0, \mu_0)} \otimes x$
is isometric.
It also follows that
$\| u \otimes 1_{L^p (Y, \nu)} \|
 = \| u^{-1} \otimes 1_{L^p (Y, \nu)} \| = 1,$
so $u \otimes 1_{L^p (Y, \nu)}$ is a bijective isometry.
The claim follows.

This claim implies that
$A_n \otimes_p D \cong \MP{r_c (n)}{p} \otimes_p D$
and $B_n \otimes_p D \cong \MP{r_d (n)}{p} \otimes_p D$
isometrically for all $n \in \N.$
We may thus assume
$A_n = \MP{r_c (n)}{p}$ and $B_n = \MP{r_d (n)}{p}.$

The last claim also immediately implies that if $k_1, k_2 \in \N$
with $k_1 | k_2,$
and if $\ep \colon \MP{k_1}{p} \to \MP{k_2}{p}$
is unital and isometric,
then
\begin{equation}\label{Eq_4913_TpD}
\ep \otimes_p \id_D \colon
  \MP{k_1}{p} \otimes_p D \to \MP{k_2}{p} \otimes_p D
\end{equation}
is unital and isometric.
Part~(\ref{T:UHFIsoCrit_DS}) of the theorem now follows.

The maps $\sm_n,$ $\ta_n,$ $\ld_n,$ and $\et_n$
in (\ref{T:UHFIsoCrit_FromAB}) and~(\ref{T:UHFIsoCrit_FD})
are now all isometric by Theorem \ref{T-LpTP}(\ref{T-LpTP-3b}).
It is immediate that these maps are compatible with the direct systems,
and parts (\ref{T:UHFIsoCrit_FromAB}) and~(\ref{T:UHFIsoCrit_FD})
follow.

We now prove part~(\ref{T:UHFIsoCrit_Iso}).
We describe the intertwining argument
(from the proof of Theorem~2.14 of~\cite{Dxm})
for the benefit of the reader not familiar with it.

We inductively construct integers
\[
0 = m_0 < m_1 < \cdots
\andeqn
0 = n_0 < n_1 < \cdots
\]
and isometric unital \hm s
\[
\af_k \colon A_{m_k} \to B_{n_k}
\andeqn
\bt_k \colon B_{n_k} \to A_{m_{k + 1}}
\]
such that
\[
\bt_k \circ \af_k = \ph_{m_{k + 1}, m_k}
\andeqn
\af_{k + 1} \circ \bt_k = \ps_{n_{k + 1}, n_k}
\]
for $k \in \Nz,$
as shown in the following diagram:
\[
\xymatrix{
A_{m_0} \ar[r]^{\ph_{m_1, m_0}} \ar[d]_{\af_0}
  & A_{m_1} \ar[r]^{\ph_{m_2, m_1}} \ar[d]_{\af_1}
  & A_{m_2} \ar[r]^{\ph_{m_3, m_2}} \ar[d]_{\af_2}
  & \cdots \ar[r] & A \\
B_{n_0} \ar[r]_{\ps_{n_1, n_0}} \ar[ru]^{\bt_0}
  & B_{n_1} \ar[r]_{\ps_{n_2, n_1}} \ar[ru]^{\bt_1}
  & B_{n_2} \ar[r]_{\ps_{n_3, n_2}} \ar[ru]^{\bt_2}
  & \cdots \ar[r] & B,
}
\]
and such that
\[
\af_k \otimes_p \id_D \colon A_{m_k} \otimes_p D \to B_{n_k} \otimes_p D
\andeqn
\bt_k \otimes_p \id_D \colon
 B_{n_k} \otimes_p D \to A_{m_{k + 1}} \otimes_p D
\]
are also isometric.
We do not need to verify separately
that $\af_k \otimes_p \id_D$ and $\bt_k \otimes_p \id_D$
are isometric,
since we saw above that for $\ep$ isometric,
the map~(\ref{Eq_4913_TpD})
is always isometric.

Set $m_0 = n_0 = 0$ and set $\af_0 = \id_{\C}.$
Set $m_1 = 1$ and let $\bt_0 \colon B_{n_0} \to A_{m_1}$
be $\bt_0 (\ld) = \ld \cdot 1$ for $\ld \in \C.$

Since the exponent of each prime $t$ in the prime factorization of $m_1$
is no larger than $N_d (t),$
there is $n_1 > 0$ such that $r_c (m_1)$ divides $r_d (n_1).$
Now \Lem{L:IsosAreEq}
provides an isometric unital \hm{} $\af_1 \colon A_{m_1} \to B_{n_1}.$
We have $\af_{1} \circ \bt_0 = \ps_{n_{1}, n_0}$
because both are $\ld \mapsto \ld \cdot 1$ for $\ld \in \C.$

Use the same reasoning as above to find
$m_2 > m_1$ such that $r_d (n_1)$ divides $r_c (m_2)$ and
an isometric unital \hm{} $\bt_1^{(0)} \colon B_{n_1} \to A_{m_{2}}.$
By \Lem{L:IsosAreEq},
there is an isometry $u_1 \in A_{m_{2}}$
such that
\[
u_1 \big( \big( \bt_1^{(0)} \circ \af_1 \big) (a) \big) u_1^{-1}
  = \ph_{m_{2}, m_1} (a)
\]
for all $a \in A_{m_1}.$
Define $\bt_1 \colon B_{n_1} \to A_{m_{2}}$
by $\bt_1 (b) = u_1 \bt_1^{(0)} (b) u_1^{-1}$ for all $b \in B_{n_1}.$
Then $\bt_1 \circ \af_1 = \ph_{m_{2}, m_1}.$

The choice of $n_2$ and construction of $\af_2$
are similar to the choice of $m_1$ and construction of $\bt_1.$
We will choose any isometric unital \hm{}
$\af_2^{(0)} \colon A_{m_2} \to B_{n_2}$
and take $\af_2 (a) = v_2 \af_2^{(0)} (a) v_2^{-1}$
for a suitable isometry $v_2 \in B_{n_2}.$

Proceed by induction.

The $\af_k$ now combine to yield an isometric \hm{}
from the algebraic direct limit of the~$A_m$
to the algebraic direct limit of the~$B_n.$
This \hm{} extends by continuity to an isometric \hm{}
$\af \colon A \to B.$
Similarly,
we get an isometric \hm{} $\bt \colon B \to A.$
Moreover, one checks that $(\bt \circ \af) (a) = a$
for every $a$ in the algebraic direct limit of the~$A_m,$
whence $\bt \circ \af = \id_A.$
Similarly $\af \circ \bt = \id_B.$
In the same way,
we use the maps
$\af_k \otimes_p \id_D$ and $\bt_k \otimes_p \id_D$
to construct an isometric \hm{}
$\gm \colon \dirlim A_n \otimes_p D \to \dirlim B_n \otimes_p D$
and an isometric \hm{}
$\dt \colon \dirlim B_n \otimes_p D \to \dirlim A_n \otimes_p D$
which is an inverse for~$\gm.$
It is obvious that $\gm \circ \sm = \ta \circ \af$
and that $\dt \circ \ta = \sm \circ \bt.$
It is also obvious that $\gm \circ \ld = \et.$
\end{proof}

The following example describes what we will call
an $L^p$~UHF algebra of (infinite) tensor product type.
We include additional notation which will be useful later.

\begin{exa}\label{E_2Y14_LpUHF}
Let $p \in [1, \I).$
Set ${\mathbb{N}} = \N.$
For each $n \in {\mathbb{N}},$
let $(X_n, {\mathcal{B}}_n, \mu_n)$ be a probability space,
let $d (n) \in \{ 2, 3, \ldots \},$
and let $\rh_n \colon M_{d (n)} \to L (L^p (X_n, \mu_n))$
be a representation
(unital, by our conventions).

For every subset $S \subset {\mathbb{N}},$
define $X_S = \prod_{n \in S} X_n,$
let ${\mathcal{B}}_S$ be the product $\sm$-algebra on~$X_S,$
and let $\mu_S$ be the product measure on~$X_S.$
Thus $X_{\varnothing}$ is a one point space with counting measure.
We let $1_S$ denote the identity operator on $L^p (X_S, \mu_S).$
For $S \subset {\mathbb{N}}$ and $n \in \Nz$ we take
\[
S_{\leq n} = S \cap \{ 1, 2, \ldots, n \}
\andeqn
S_{> n} = S \cap \{ n + 1, \, n + 2, \ldots \}.
\]
In particular,
\[
{\mathbb{N}}_{\leq n} = \{ 1, 2, \ldots, n \}
\andeqn
{\mathbb{N}}_{> n} = \{ n + 1, \, n + 2, \ldots \}.
\]
Then Theorem~\ref{T-LpTP}
provides an obvious identification
\begin{equation}\label{Eq_2Y14_2Fact}
L^p (X_S, \mu_S)
  = L^p (X_{S_{\leq n}}, \, \mu_{S_{\leq n}})
   \otimes_p L^p (X_{S_{> n}}, \, \mu_{S_{> n}}).
\end{equation}

Suppose now that $S$ is finite.
Set $l (S) = \card (S),$
and
write
\[
S = \{ m_{S, 1}, \, m_{S, 2}, \, \ldots, m_{S, l (S)} \}
\]
with $m_{S, 1} < m_{S, 2} < \cdots < m_{S, l (S)}.$
Theorem~\ref{T-LpTP}
provides an obvious identification
\[
L^p (X_S, \mu_S)
 = L^p (X_{m_{S, 1}}, \mu_{m_{S, 1}})
   \otimes_p L^p (X_{m_{S, 2}}, \mu_{m_{S, 2}})
   \otimes_p \cdots
   \otimes_p L^p (X_{m_{S, l (S)}}, \mu_{m_{S, l (S)}}).
\]
Set
\[
d (S) = \prod_{j = 1}^{l (S)} d (m_{S, j})
\andeqn
M_S = \bigotimes_{j = 1}^{l (S)} M_{d (m_{S, j})}.
\]
We take $d (\E) = 1$ and $M_{\E} = \C.$
Then $M_S \cong M_{d (S)}.$
Further let $\rh_S \colon M_S \to L (L^p (X_S, \mu_S))$
be the unique representation such that
for
\[
a_1 \in M_{d (m_{S, 1})}, \, \, a_2 \in M_{d (m_{S, 2})},
  \, \, \cdots, \, \, a_{l (S)} \in M_{d (m_{S, l (S)})},
\]
we have,
following Theorem~\ref{T-LpTP}(\ref{T-LpTP-3a}),
\[
\rh_S (a_1 \otimes a_2 \otimes \cdots \otimes a_{l (S)})
  = \rh_{d (m_{S, 1})} (a_1) \otimes \rh_{d (m_{S, 2})} (a_2)
     \otimes \cdots \otimes \rh_{d (m_{S, l (S)})} (a_{l (S)}).
\]

For finite sets $S \subset T \subset {\mathbb{N}},$
there is an obvious \hm{} $\ph_{T, S} \colon M_S \to M_T,$
obtained by filling in a tensor factor of~$1$ for every element
of $T \setminus S.$
Formally,
for
\[
a_1 \in M_{d (m_{S, 1})}, \, \, a_2 \in M_{d (m_{S, 2})},
  \, \, \cdots, \, \, a_{l (S)} \in M_{d (m_{S, l (S)})},
\]
we have
\[
\ph_{T, S} (a_1 \otimes a_2 \otimes \cdots \otimes a_{l (S)})
  = b_1 \otimes b_2 \otimes \cdots \otimes b_{l (T)}
\]
with $b_k = a_j$ when $m_{T, k} = m_{S, j}$
and $b_k = 1$ whenever $m_{T, k} \not\in S.$
We then define
$\rh_{T, S} = \rh_T \circ \ph_{T, S} \colon
   M_S \to L (L^p (X_T, \mu_T)).$
When $S$ is finite but $T$ is not,
choose some $n \geq \sup (S).$
Following~(\ref{Eq_2Y14_2Fact})
and Theorem~\ref{T-LpTP}(\ref{T-LpTP-3a}),
for $a \in M_S$ set
\[
\rh_{T, S} (a) = \rh_{T_{\leq n}, S} (a) \otimes 1_{T_{> n}}
  \in L (L^p (X_T, \mu_T)).
\]
This clearly defines a \rpn{} of $M_S$
on $L^p (X_T, \mu_T),$
which is independent of the choice of~$n \geq \sup (S).$

We equip $M_S$ with the norm
$\| a \| = \| \rh_S (a) \|$
for all $a \in M_S.$
Then the maps $\ph_{T, S},$
for $S \subset T \subset {\mathbb{N}}$ finite,
and $\rh_{T, S},$
for $S \subset T \subset {\mathbb{N}}$ with $S$ finite,
are all isometric,
by Proposition~\ref{P:TProdOfAlgs}(\ref{P:TProdOfAlgs-2})
(using Lemma~\ref{L_2Z29_PermFact} to reorder tensor factors as needed).
For $S \subset {\mathbb{N}}$ finite,
we now define $A_S \subset \LLp$
by $A_S = \rh_{{\mathbb{N}}, S} (M_S),$
and for $S \subset {\mathbb{N}}$ infinite,
we set
\[
A_S = {\overline{\bigcup_{n = 0}^{\I} A_{S_{\leq n}} }}.
\]
In a similar way,
for arbitrary $S, T \subset {\mathbb{N}}$ with $S \subset T,$
we define $A_{T, S} \subset L (L^p (X_T, \mu_T)).$

The algebra $A = A_{{\mathbb{N}}}$ is an $L^p$~UHF algebra,
of type~$N_d$
in the notation of Definition~\ref{D_2Y15_SNat}
and the sense of Definition~\ref{D_2Y14_pUHFTypeN}.
When the ingredients used in its construction need to be specified,
we set $\rh = (\rh_1, \rh_2, \ldots)$
and write $A (d, \rh).$
\end{exa}

\begin{dfn}\label{D_2Y15_ITP}
Let $\XBM$ be a \sfm,
let $p \in [1, \infty),$
and let $A \subset \LLp$ be a unital subalgebra.
\begin{enumerate}
\item\label{D_2Y15_ITP_Basic}
We say that $A$ is an
{\emph{$L^p$~UHF algebra of tensor product type}}
if there exist $d$ and $\rh = (\rh_1, \rh_2, \ldots)$
as in Example~\ref{E_2Y14_LpUHF}
such that $A$ is isometrically isomorphic to $A (d, \rh).$
\item\label{D_2Y15_ITP_Spatial}
We say that $A$ is a {\emph{spatial $L^p$~UHF algebra}}
if in addition it is possible to choose
each \rpn{} $\rh_n$ to be spatial in the sense
of Definition~\ref{D_2Y14_SpMd}.
\item\label{D_2Y15_ITP_LEI}
We say that $A$ {\emph{locally has enough isometries}}
if it is possible to choose $d$ and $\rh$
as in~(\ref{D_2Y15_ITP_Basic})
such that, in addition,
$\rh_n$ locally has enough isometries,
in the sense of Definition~\ref{D_2Y15_EnId}(\ref{D_2Y15_EnId_Loc}),
for all $n \in \N.$
\item\label{D_2Y15_DomSpatial}
If $A$ locally has enough isometries,
we further say that $A$
{\emph{dominates the spatial representation}}
if it is possible to choose $d$ and $\rh$
as in~(\ref{D_2Y15_ITP_LEI})
such that, in addition,
for all $n \in \N$ the representation
$\rh_n$ dominates the spatial representation
in the sense of
Definition~\ref{D_2Y15_EnId}(\ref{D_2Y15_EnId_Compat}).
\end{enumerate}
\end{dfn}

\begin{thm}\label{T_2Y15_SpEx}
Let $p \in [1, \infty)$
and let $N$ be a supernatural number.
Then:
\begin{enumerate}
\item\label{T_2Y15_SpEx_Ex}
There exists a spatial direct system of type~$N,$
as in Definition~\ref{D:StdUHF}.
\item\label{T_2Y15_SpEx_LimIsSp}
The direct limit of any spatial direct system of type~$N$
is a spatial $L^p$~UHF algebra of type~$N.$
\item\label{T_2Y15_SpEx_SpIsLim}
Every spatial $L^p$~UHF algebra of type~$N$
is the direct limit of a spatial direct system of type~$N.$
\item\label{T_2Y15_SpEx_SpEI}
Every spatial $L^p$~UHF algebra of type~$N$
locally has enough isometries.
\item\label{T_2Y15_SpEx_Contr}
Let $A$ be a spatial $L^p$~UHF algebra of type~$N,$
let $\XBM$ be a \sfm,
and let $\rh \colon A \to \LLp$ be a unital contractive \hm.
Then $\rh$ is isometric.
\end{enumerate}
\end{thm}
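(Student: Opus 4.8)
The plan is to prove the five parts roughly in the stated order, using the uniqueness theorem (Theorem~\ref{T:UHFIsoCrit}) and the machinery on spatial representations of matrix algebras already recalled from~\cite{Ph5}. For part~(\ref{T_2Y15_SpEx_Ex}), I would first pick, for the given supernatural number $N$, a sequence $d = (d(1), d(2), \ldots)$ in $\{2, 3, \ldots\}$ with $N_d = N$; this is an elementary exercise in the definition of a supernatural number (Definition~\ref{D_2Y15_SNat}), splitting each prime power contribution into a product of integers $\geq 2$. Then set $A_n = \MP{r_d(n)}{p}$ and let $\ph_{n+1, n} \colon \MP{r_d(n)}{p} \to \MP{r_d(n+1)}{p}$ be the unital isometric embedding $a \mapsto a \otimes 1$ provided by Lemma~\ref{L:IsosAreEq}(\ref{L:IsosAreEq-Ex}), with $A_0 = \C$. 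The composites $\ph_{n, m}$ satisfy the cocycle identity, and by Theorem~\ref{T-LpTP}(\ref{T-LpTP-3b}) each is isometric, so $\big((A_n)_n, (\ph_{n, m})_{m \leq n}\big)$ is a spatial direct system of type~$N$.

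For part~(\ref{T_2Y15_SpEx_LimIsSp}), given a spatial direct system of type~$N$ with limit $A$, I would realize $A$ concretely as an $L^p$~UHF algebra of tensor product type by building the infinite tensor product construction of Example~\ref{E_2Y14_LpUHF}. The point is that a unital isometric (equivalently, by Theorem~\ref{T_2Y17Iso}, spatial) map $\MP{r_d(n)}{p} \to \MP{r_d(n+1)}{p}$ is, up to conjugation by an isometry (Lemma~\ref{L:IsosAreEq}(\ref{L:IsosAreEq-Uq})), the standard embedding $a \mapsto a \otimes 1$, and conjugating by isometries does not affect the relevant norms. So one chooses spatial representations $\rh_n$ of the tensor factors $M_{d(n)}$ (these exist — e.g. the identity representation of Example~\ref{E_2Y15_MdIdRep} is spatial, being isometric), forms $A(d, \rh)$, and uses Theorem~\ref{T:UHFIsoCrit} with $D = \C$ to conclude that $A(d, \rh)$ is isometrically isomorphic to the direct limit $A$, since both are direct limits of spatial direct systems of the same type~$N$. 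Thus $A$ is a spatial $L^p$~UHF algebra of type~$N$. Part~(\ref{T_2Y15_SpEx_SpIsLim}) is the converse and is essentially immediate from the definitions: if $A$ is isometrically isomorphic to some $A(d, \rh)$ with each $\rh_n$ spatial, then the subalgebras $A_{\mathbb{N}_{\leq n}}$ with the inclusion maps form, by the isometry statements already established in Example~\ref{E_2Y14_LpUHF}, a spatial direct system of type~$N_d = N$ whose direct limit is $A$.

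For part~(\ref{T_2Y15_SpEx_SpEI}), write $A$ as $A(d, \rh)$ with each $\rh_n$ spatial. A spatial representation of $M_{d(n)}$ need not by itself have enough isometries in the sense of Definition~\ref{D_2Y15_EnId}, but by Theorem~\ref{T_2Y17Iso} it is isometric, and one checks from the structure theory (Theorem~7.2 of~\cite{Ph5}) that, up to the decomposition of $X_n$ into pieces, it locally has enough isometries; alternatively, one may simply replace $\rh_n$ by the identity representation on $l^p_{d(n)}$, which has enough isometries by Example~\ref{E_2Y15_MdIdRep} (via the signed permutation matrices, Lemma~\ref{L_2Y16_PrpSgPm}), and observe that replacing each $\rh_n$ this way yields an isometrically isomorphic algebra. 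Then Lemma~\ref{L_2Y15_TPrd}(\ref{L_2Y15_TPrd_LEI}) shows each finite tensor product $\rh_{\mathbb{N}_{\leq n}}$ locally has enough isometries, and taking the decomposition compatible across all $n$ shows $A$ locally has enough isometries. Finally, part~(\ref{T_2Y15_SpEx_Contr}): let $\rh \colon A \to \LLp$ be unital and contractive. Restricting to each finite-stage subalgebra $D_n \cong \MP{r_d(n)}{p}$ gives a unital contractive representation of a matrix algebra, hence spatial (Definition~\ref{D_2Y14_SpMd}), hence isometric (Theorem~\ref{T_2Y17Iso}); for $p = 2$ one invokes Lemma~\ref{L_3717_CSt} instead. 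Since $\rh$ is isometric on the dense subalgebra $\bigcup_n D_n$ and is contractive on all of $A$, it is isometric on $A$.

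The main obstacle I anticipate is part~(\ref{T_2Y15_SpEx_LimIsSp}): one must genuinely produce an $L^p$-space representation of an abstractly-defined direct limit, and the content there is exactly that the uniqueness theorem Theorem~\ref{T:UHFIsoCrit} lets one transport the concrete model $A(d, \rh)$ onto the abstract limit without disturbing the norm. Everything else is bookkeeping with the isometry statements already proved in Example~\ref{E_2Y14_LpUHF} and the equivalence of ``contractive'', ``spatial'', and ``isometric'' for matrix algebras.
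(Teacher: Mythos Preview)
Your proposal is correct and follows essentially the same route as the paper: build the concrete model $A(d,\rh)$ with identity representations, invoke Theorem~\ref{T:UHFIsoCrit} (with $D=\C$) to identify any abstract spatial direct limit with it, and use the contractive-$\Leftrightarrow$-isometric equivalence (Theorem~\ref{T_2Y17Iso}, or Lemma~\ref{L_3717_CSt} for $p=2$) on each finite stage for part~(\ref{T_2Y15_SpEx_Contr}). The one place to tighten is part~(\ref{T_2Y15_SpEx_SpIsLim}): Example~\ref{E_2Y14_LpUHF} only equips $M_S$ with the norm $\|a\|=\|\rh_S(a)\|$, so to get $A_{\mathbb{N}_{\leq n}}\cong \MP{r_d(n)}{p}$ isometrically (as required for a spatial direct system) you must invoke Corollary~\ref{C_2Y21MpTens}, exactly as the paper does.
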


\begin{proof}
We first prove~(\ref{T_2Y15_SpEx_SpIsLim}).
Use the notation of Example~\ref{E_2Y14_LpUHF},
so that we are assuming that $\rh_n$ is spatial
for all $n \in {\mathbb{N}},$
and that $N = N_d.$
It is enough to prove that $\ph_{T, S}$ and $\rh_{T, S}$ are isometric
whenever $S \subset T \subset {\mathbb{N}}$
with $S$ finite,
and that $\rh_S,$
regarded as a map $\MP{d (S)}{p} \to L ( L^p (X_S, \mu_S)),$
is isometric whenever $S \subset {\mathbb{N}}$ is finite.
We already saw in Example~\ref{E_2Y14_LpUHF}
that $\ph_{T, S}$ and $\rh_{T, S}$ are isometric.
That $\rh_S$ is isometric follows from Corollary~\ref{C_2Y21MpTens}.
We have proved~(\ref{T_2Y15_SpEx_SpIsLim}).

Now
choose a sequence $d$ as in Definition~\ref{D_2Y15_SNat}
such that $N_d = N.$
In Example~\ref{E_2Y14_LpUHF},
take $X_n = \{ 1, 2, \ldots, d (n) \}.$
Let $\mu_n$ be normalized counting measure on~$X_n,$
that is, $\mu_n$ is the probability measure which gives the same mass
to every point.
Multiplication by $d (n)^{- 1/p}$ defines an
isometric isomorphism from $L^p (X_n, \mu_n)$ to the space
$l_{d (n)}^p$ of Notation~\ref{N:FDP},
and hence gives an isometric isomorphism
from $\MP{d (n)}{p}$ to~$L (L^p (X_n, \mu_n)).$
For $n \in {\mathbb{N}},$
we take $\rh_n$ to be this isomorphism.
Then $A (d, \rh)$ is a spatial $L^p$~UHF algebra of type~$N.$
Applying part~(\ref{T_2Y15_SpEx_SpIsLim}),
we find that $A (d, \rh)$
is the direct limit of a spatial direct system of type~$N.$
This is~(\ref{T_2Y15_SpEx_Ex}).

By Theorem~\ref{T:UHFIsoCrit}
(with $D = \C$),
the direct limit of any spatial direct system of type~$N$
is isometrically isomorphic to $A (d, \rh).$
So part~(\ref{T_2Y15_SpEx_LimIsSp}) follows.
Similarly, for~(\ref{T_2Y15_SpEx_SpEI}),
we need only consider $A (d, \rh),$
which locally has enough isometries by Example~\ref{E_2Y15_MdIdRep}.

For part~(\ref{T_2Y15_SpEx_Contr}),
it now suffices to show that
whenever $\YCN$ is a \sfm{}
and $\sm \colon A (d, \rh) \to L (L^p (Y, \nu))$
is a unital contractive \hm,
then $\sm$ is isometric.
For every $n \in {\mathbb{N}},$
the subalgebra $A_n$ is isometrically isomorphic
to $\MP{ d ({\mathbb{N}}_{\leq n}) }{p}.$
If $p \neq 2,$ then $\sm |_{A_n}$ is isometric by
Theorem~\ref{T_2Y17Iso}.
If $p = 2,$ use Lemma~\ref{L_3717_CSt}
and simplicity of $\MP{ d ({\mathbb{N}}_{\leq n}) }{p}$
to see that $\sm |_{A_n}$ is isometric.
Since $\bigcup_{n = 1}^{\I} A_n$ is dense in $A (d, \rh),$
it follows that $\sm$ is isometric.
\end{proof}

We now prove that an $L^p$~UHF algebra
of tensor product type which locally has enough isometries is simple.
For a spatial $L^2$~UHF algebra,
when the algebras are all \ca s,
this is automatic:
the direct limit of simple \ca s is simple.
The proof depends on the fact that an injective \hm{} of \ca s
is isometric,
which is not true for Banach algebras.
We suppose that the direct limit of simple Banach algebras
need not be simple,
although we do not know any counterexamples.
(See Question~\ref{Q_3121_NSimp}.)
Spatial $L^p$~UHF algebras are in fact simple,
but the proof requires more work.

\begin{lem}\label{L:UHFCondExpt}
Let $p \in [1, \I),$
and let $A$ be an $L^p$~UHF algebra of tensor product type
which locally has enough isometries,
as in Definition~\ref{D_2Y15_ITP}.
Let the notation be as in Example~\ref{E_2Y14_LpUHF},
with the choices made so that $\rh_n$
locally has enough isometries for all $n \in {\mathbb{N}}.$
Then for every $R, S \subset {\mathbb{N}}$ with $R \subset S,$
there is a unique continuous linear map
$E_{R, S} \colon A_S \to A_R$
such that for every finite set $Q \subset S \setminus R,$
every $b \in M_Q,$ and every $a \in A_R,$ we have
\begin{equation}\label{Eq:PartialCE}
E_{R, S} (\rh_{{\mathbb{N}}, Q} (b) a) = \tr (b) a.
\end{equation}
Moreover:
\begin{enumerate}
\item\label{L:UHFCondExpt_IdP}
$E_{R, S}$ is an ideal preserving
Banach conditional expectation
for all $R, S \subset {\mathbb{N}}$ with $R \subset S.$
\item\label{L:UHFCondExpt_Cmp}
Whenever $R \subset T \subset S \subset {\mathbb{N}},$
we have $E_{R, T} \circ E_{T, S} = E_{R, S}.$
\item\label{L:UHFCondExpt_Restr}
Whenever $R \subset S \subset {\mathbb{N}}$ and $n \in \Nz,$
we have
$E_{R, S} |_{A_{S_{\leq n}}} = E_{R_{\leq n}, S_{\leq n} }.$
\end{enumerate}
\end{lem}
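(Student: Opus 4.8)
The plan is to construct $E_{R,S}$ in two stages, according to whether $S \setminus R$ is finite or infinite, and to extract the ``Moreover'' statements from the finite case together with a limiting argument.

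\emph{Finite difference.} Suppose $S \setminus R$ is finite, and write $Q = S \setminus R$. Using Lemma~\ref{L_2Z29_PermFact} to reorder tensor factors, identify $L^p(X_{\mathbb{N}}, \mu_{\mathbb{N}})$ with $L^p(X_Q, \mu_Q) \otimes_p L^p(X_{\mathbb{N} \setminus Q}, \mu_{\mathbb{N} \setminus Q})$; under this identification $A_S$ becomes $\rh_Q(M_Q) \otimes_p A_{\mathbb{N} \setminus Q,\, R}$ and $A_R$ becomes $1 \otimes A_{\mathbb{N} \setminus Q,\, R}$. Since $\rh_Q$ is a finite $\otimes_p$-product of the $\rh_n$, it locally has enough isometries by Lemma~\ref{L_2Y15_TPrd}(\ref{L_2Y15_TPrd_LEI}), and $A_{\mathbb{N} \setminus Q,\, R}$ is a closed unital subalgebra of $L(L^p(X_{\mathbb{N} \setminus Q}, \mu_{\mathbb{N} \setminus Q}))$, so Lemma~\ref{L:CondExptMd} provides an ideal preserving Banach conditional expectation $E_{R,S} \colon A_S \to A_R$ with $E_{R,S}(\rh_Q(y) a) = \tr(y) a$ for $y \in M_Q$ and $a \in A_R$. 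Filling in identity tensor factors shows that (\ref{Eq:PartialCE}) holds for every finite $Q' \subseteq S \setminus R$. For uniqueness, note that the elements $\rh_{\mathbb{N}, Q'}(b) a$, with $Q' \subseteq S \setminus R$ finite, $b \in M_{Q'}$, and $a \in A_R$, have dense linear span in $A_S$ (their span already contains each $A_{S_{\leq n}}$, since $M_{S_{\leq n}}$ factors as $M_{R_{\leq n}} \otimes M_{(S \setminus R)_{\leq n}}$), so any continuous linear map satisfying (\ref{Eq:PartialCE}) is determined by it. Statements (1)--(3) restricted to finite differences are then routine: ideal preservation is supplied by Lemma~\ref{L:CondExptMd}, and composition and restriction follow from uniqueness by checking that $E_{R,T} \circ E_{T,S}$ and $E_{R,S}|_{A_{S_{\leq n}}}$ satisfy the defining identities for $E_{R,S}$ and $E_{R_{\leq n}, S_{\leq n}}$ respectively; here one splits a finite $Q \subseteq S \setminus R$ as $(Q \cap T) \sqcup (Q \setminus T)$ and uses $\tr(b_1 \otimes b_2) = \tr(b_1)\tr(b_2)$.

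\emph{Infinite difference.} Suppose $S \setminus R$ is infinite. For $n \in \Nz$ put $R_n = S \setminus (S \setminus R)_{\leq n}$, so that $R \subseteq \cdots \subseteq R_{n+1} \subseteq R_n \subseteq \cdots \subseteq S$, each $S \setminus R_n$ is finite, and $\bigcap_n R_n = R$. By the finite-difference case we have Banach conditional expectations $E_{R_n, S} \colon A_S \to A_{R_n}$, and the finite-difference restriction property gives $E_{R_n, S}|_{A_{S_{\leq m}}} = E_{R_{\leq m}, S_{\leq m}}$ whenever $n \geq m$ (one checks $(R_n)_{\leq m} = R_{\leq m}$ for $n \geq m$). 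Hence $n \mapsto E_{R_n, S}(x)$ is eventually constant for $x$ in the dense subalgebra $\bigcup_m A_{S_{\leq m}}$; since $\|E_{R_n, S}\| = 1$ for all $n$, the pointwise limit $E_{R,S}(x) = \lim_n E_{R_n, S}(x)$ exists for all $x \in A_S$, defining a contractive linear map with range in $A_R$. Passing to the limit in the corresponding properties of the $E_{R_n, S}$ shows that $E_{R,S}$ is a Banach conditional expectation, and (\ref{Eq:PartialCE}) and uniqueness are proved exactly as in the finite case.

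The essential point, which is the crux of the simplicity proof (as noted above), is ideal preservation in the infinite-difference case, and the reason the limiting construction is the right one is that it never leaves a given closed ideal. Indeed, if $I \subseteq A_S$ is a closed ideal and $x \in I$, then $E_{R_n, S}(x) \in I \cap A_{R_n}$ for every $n$ because each $E_{R_n, S}$ is ideal preserving (a finite-difference conditional expectation, hence covered by Lemma~\ref{L:CondExptMd}), so $E_{R,S}(x) = \lim_n E_{R_n, S}(x) \in \overline{I} \cap A_R = I \cap A_R$. This sidesteps the naive idea of approximating $x$ by elements of $A_{S_{\leq n}}$, which need not lie in $I$. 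With both cases available, (2) and (3) in full generality follow once more from the uniqueness assertion, by checking the appropriate defining identities. The only step requiring care is the bookkeeping with tensor-factor identifications (via Lemma~\ref{L_2Z29_PermFact}) needed to present $A_S$ as $\rh_{S \setminus R}(M_{S \setminus R}) \otimes_p A_{\mathbb{N} \setminus (S \setminus R),\, R}$ in the finite-difference case.
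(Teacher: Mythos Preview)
Your argument is correct and follows essentially the same route as the paper: reduce to Lemma~\ref{L:CondExptMd} when $S\setminus R$ is finite, then pass to the limit over the finite-difference expectations $E_{R\cup S_{>n},\,S}$ (your $R_n$ equals $R\cup S_{>n}$) to get ideal preservation in general. The only packaging difference is that the paper first builds $E_{R,S}$ from the coherent family $E_{R_{\leq n},\,S_{\leq n}}$ on the dense union and then separately proves the limit formula $E_{R\cup S_{>n},\,S}(a)\to E_{R,S}(a)$, whereas you take that limit as the definition; this is a mild streamlining rather than a different idea.
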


\begin{proof}
Uniqueness follows from the fact that the elements
$\rh_{{\mathbb{N}}, Q} (b) a$
which appear in~(\ref{Eq:PartialCE})
span a dense subset of~$A_S.$

Next,
suppose $R \subset T \subset S \subset {\mathbb{N}}$
and $E_{R, T},$ $E_{T, S},$ and $E_{R, S}$
are all known to exist.
We prove~(\ref{L:UHFCondExpt_Cmp}).
Let $P \subset S \setminus T$ and $Q \subset T \setminus R$ be finite,
and let $a \in A_R,$ $b \in M_P,$ and $c \in M_Q.$
Using~(\ref{Eq:PartialCE}) repeatedly, we have
\begin{align*}
(E_{R, T} \circ E_{T, S}) \big( \rh_{{\mathbb{N}}, P \cup Q} (b \otimes c) a \big)
& = (E_{R, T} \circ E_{T, S})
        \big( \rh_{{\mathbb{N}}, P} (b) \rh_{{\mathbb{N}}, Q} (c) a \big)
   \\
& = E_{R, T} \big( \tr (b) \rh_{{\mathbb{N}}, Q} (c) a \big)
  = \tr (b) \tr (c) a
   \\
& = \tr (b \otimes c) a
  = E_{R, S} \big( \rh_{{\mathbb{N}}, P \cup Q} (b \otimes c) a \big).
\end{align*}
Since the elements $\rh_{{\mathbb{N}}, P \cup Q} (b \otimes c) a$
span a dense subset of~$A_S,$
part~(\ref{L:UHFCondExpt_Cmp})~follows.

We now prove existence.
When $S \setminus R$ is finite,
all but~(\ref{L:UHFCondExpt_Cmp})
is immediate from Lemma~\ref{L:CondExptMd}
and the definitions,
and (\ref{L:UHFCondExpt_Cmp})~follows as above.
In the general case,
for $n \in \Nz$ define
$E_n = E_{R_{\leq n}, S_{\leq n}} \colon
    A_{S_{\leq n}} \to A_{R_{\leq n}}.$
For $m, n \in \Nz$ with $m \leq n,$
applying~(\ref{L:UHFCondExpt_Restr}),
with $R_{\leq n},$ $S_{\leq n},$ and~$m$
in place of $R,$ $S,$ and~$n,$
we get $E_n |_{A_{S_{\leq m}}} = E_m.$
Therefore there is a linear map
\[
E_{\I} \colon
 \bigcup_{n = 0}^{\I} A_{S_{\leq n}}
    \to \bigcup_{n = 0}^{\I} A_{R_{\leq n}}
\]
satisfying~(\ref{Eq:PartialCE})
for every finite set $Q \subset S \setminus R,$
every $b \in M_Q,$
and every $a \in \bigcup_{n = 0}^{\I} A_{S_{\leq n}}.$
Since $\| E_n \| \leq 1$ for all $n \in \Nz,$
the map $E_{\I}$ extends to a linear map
$E_{R, S} \colon A_S \to A_R$
by continuity,
such that $\| E_{R, S} \| \leq 1$
and $E_{R, S}$ satisfies~(\ref{Eq:PartialCE})
for for every finite set $Q \subset S \setminus R,$
every $b \in M_Q,$
and every $a \in A_S.$
The properties of a Banach conditional expectation
follow by continuity.
The relation (\ref{L:UHFCondExpt_Cmp})~follows as above
and~(\ref{L:UHFCondExpt_Restr}) is clear,
but it remains to prove that $E_{R, S}$ is ideal preserving.

We first claim that if $a \in A_S,$
then $\limi{n} E_{R \cup S_{> n}, \, S} (a) = E_{R, S} (a).$
To see this,
choose $N \in {\mathbb{N}}$ and $b \in A_{S_{\leq N}}$
such that $\| b - a \| < \tfrac{\ep}{2}.$
For $n \geq N,$
we have, using~(\ref{L:UHFCondExpt_Restr}) twice,
$E_{R \cup S_{> n}, \, S} (b)
 = E_{R_{\leq N}, S_{\leq N} } (b) = E_{R, S} (b).$
Therefore,
using $\| E_{R \cup S_{> n}, \, S} \| \leq 1$
and $\| E_{R, S} \| \leq 1,$ we have
\[
\| E_{R \cup S_{> n}, \, S} (a) - E_{R, S} (a) \|
  \leq 2 \| a - b \|
  < \ep.
\]
The claim follows.

Now let $I \subset A_S$ be a closed ideal.
For all $n \in {\mathbb{N}},$
we have $S \setminus (R \cup S_{> n}) \subset {\mathbb{N}}_{\leq n},$
which is finite.
So $a \in I$ implies $E_{R \cup S_{> n}, \, S} (a) \in I.$
Since $I$ is closed,
the claim implies that $E_{R, S} (a) \in I,$
as desired.
This completes the proof.
\end{proof}

\begin{cor}\label{C:LpUHFTrace}
Let $p \in [1, \I),$
and let $A$ be an $L^p$~UHF algebra of tensor product type
which locally has enough isometries,
as in Definition~\ref{D_2Y15_ITP}.
Then there exists a unique \ct{} linear functional
$\ta \colon A \to \C$ such that $\ta (1) = 1$
and $\ta (b a) = \ta (a b)$ for all $a, b \in A.$
Moreover, $\| \ta \| = 1.$
\end{cor}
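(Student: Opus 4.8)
The plan is to produce $\ta$ directly from the Banach conditional expectation $E_{\E, {\mathbb{N}}} \colon A = A_{{\mathbb{N}}} \to A_{\E}$ supplied by Lemma~\ref{L:UHFCondExpt} (taking $R = \E$ and $S = {\mathbb{N}}$). Since $A_{\E} = \C \cdot 1$ and $\| 1 \| = 1$ in $\LLp,$ the map $\ld \cdot 1 \mapsto \ld$ is an isometric isomorphism $A_{\E} \cong \C,$ and I would define $\ta$ to be the composite of $E_{\E, {\mathbb{N}}}$ with this isomorphism. Then continuity, $\| \ta \| \leq \| E_{\E, {\mathbb{N}}} \| = 1,$ and $\ta (1) = 1$ are all immediate from the axioms in Definition~\ref{D:CondExpt}; combined with $\| \ta \| \geq | \ta (1) | = 1$ this already gives $\| \ta \| = 1.$

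Next I would verify the trace identity. Specializing~(\ref{Eq:PartialCE}) to $R = \E,$ $S = {\mathbb{N}},$ and $a = 1$ yields $\ta ( \rh_{{\mathbb{N}}, Q} (b) ) = \tr (b)$ for every finite $Q \subset {\mathbb{N}}$ and $b \in M_Q.$ Taking $Q = {\mathbb{N}}_{\leq n}$ and using $A_{{\mathbb{N}}_{\leq n}} = \rh_{{\mathbb{N}}, {\mathbb{N}}_{\leq n}} (M_{{\mathbb{N}}_{\leq n}})$ together with the identification $M_{{\mathbb{N}}_{\leq n}} \cong M_{r_d (n)},$ this shows that $\ta$ restricted to $A_{{\mathbb{N}}_{\leq n}}$ is the normalized matrix trace, and in particular is tracial there. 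Since the $A_{{\mathbb{N}}_{\leq n}}$ are nested with dense union in~$A,$ any $a, b$ in that union lie in a common $A_{{\mathbb{N}}_{\leq n}},$ so $\ta (ab) = \ta (ba);$ I would then extend this to arbitrary $a, b \in A$ by approximation, using continuity of multiplication and of $\ta.$

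For uniqueness, I would take another continuous functional $\ta'$ with $\ta' (1) = 1$ and $\ta' (ba) = \ta' (ab)$ for all $a, b.$ Its restriction to $A_{{\mathbb{N}}_{\leq n}} \cong M_{r_d (n)}$ is a continuous functional that annihilates all commutators and sends the identity to~$1;$ since the space of commutator-annihilating functionals on a full matrix algebra is one dimensional and spanned by $\tr,$ this forces $\ta' = \ta$ on each $A_{{\mathbb{N}}_{\leq n}},$ hence, by density and continuity, on all of $A.$

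I do not expect a genuine obstacle: the statement is essentially a corollary of Lemma~\ref{L:UHFCondExpt}, and the work is all routine. The two points needing a little care are matching the identification $A_{{\mathbb{N}}_{\leq n}} \cong M_{r_d (n)}$ with the correct instance of~(\ref{Eq:PartialCE}) so that the restriction of $\ta$ is genuinely $\tr,$ and invoking the standard fact that a trace functional on a full matrix algebra is a scalar multiple of the normalized trace --- this single fact underlies both the trace property of $\ta$ and its uniqueness.
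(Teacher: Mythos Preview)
Your proposal is correct and follows essentially the same route as the paper: define $\ta$ via $E_{\E, {\mathbb{N}}},$ read off $\|\ta\| = 1$ and $\ta(1) = 1,$ use that the restriction to each $A_{{\mathbb{N}}_{\leq n}}$ is the normalized matrix trace to get the trace identity by density, and deduce uniqueness from uniqueness of the trace on $M_{r_d(n)}.$ The only cosmetic difference is that for uniqueness the paper, rather than invoking density and continuity directly, packages that step by appealing to the uniqueness clause of Lemma~\ref{L:UHFCondExpt} (which is itself proved by density); your version is slightly more explicit but amounts to the same argument.
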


We don't know whether uniqueness still holds without the
continuity assumption.
(Without the adjoint and positivity of~$\ta,$
we don't know any automatic continuity results.)

\begin{proof}[Proof of Corollary~\ref{C:LpUHFTrace}]
Use the notation of Example~\ref{E_2Y14_LpUHF}.
For existence and $\| \ta \| = 1,$
apply \Lem{L:UHFCondExpt} with $R = \E$ and $S = {\mathbb{N}}.$
Take $\ta$ to be defined by $E_{\E, {\mathbb{N}}} (a) = \ta (a) \cdot 1$
for $a \in A.$
For uniqueness,
use the notation of \Lem{L:UHFCondExpt}.
By uniqueness of the trace on a full matrix algebra,
for all $n \in \Nz$
and $a \in M_{ {\mathbb{N}}_{\leq n} } = M_{r_d (n)}$
we must have
$\ta ( \rh_{{\mathbb{N}}, {\mathbb{N}}_{\leq n} } (a)) = \tr (a).$
The uniqueness statement in \Lem{L:UHFCondExpt}
now implies that $\ta (a) \cdot 1 = E_{\E, {\mathbb{N}}} (a)$
for all $a \in A.$
\end{proof}

\begin{thm}\label{T:LpUHFSimple}
Let $p \in [1, \I),$
and let $A$ be an $L^p$~UHF algebra of tensor product type
which locally has enough isometries,
as in Definition~\ref{D_2Y15_ITP}.
Then $A$ is algebraically simple.
\end{thm}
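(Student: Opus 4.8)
The plan is to prove algebraic simplicity in two moves: first show that $A$ has no nonzero proper \emph{closed} two-sided ideal, and then upgrade to the purely algebraic statement by a Neumann-series argument available in any unital Banach algebra. Note first that $A \neq 0$, since it contains a copy of $M_{d (1)}$ with $d (1) \geq 2$. For the topological statement I would work with the notation of Example~\ref{E_2Y14_LpUHF}, so that $A = A_{\mathbb{N}} = \overline{\bigcup_{n \geq 0} A_{\mathbb{N}_{\leq n}}}$, where each $A_{\mathbb{N}_{\leq n}}$ is a closed unital subalgebra which is algebraically isomorphic to the full matrix algebra $M_{r_d (n)}$, and hence algebraically simple. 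By Lemma~\ref{L:UHFCondExpt}(\ref{L:UHFCondExpt_IdP}), applied with $R = \mathbb{N}_{\leq n}$ and $S = \mathbb{N}$, the maps $E_{\mathbb{N}_{\leq n}, \, \mathbb{N}} \colon A \to A_{\mathbb{N}_{\leq n}}$ are ideal preserving Banach conditional expectations.

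Given a nonzero closed ideal $J \subset A$, Lemma~\ref{L_2Y15_IdPLim} then yields $J = \overline{\bigcup_{n} (J \cap A_{\mathbb{N}_{\leq n}})}$. For each $n$, the set $J \cap A_{\mathbb{N}_{\leq n}}$ is a two-sided ideal of the algebraically simple algebra $A_{\mathbb{N}_{\leq n}}$, so it is either $\{ 0 \}$ or all of $A_{\mathbb{N}_{\leq n}}$; and it cannot be $\{ 0 \}$ for every $n$, for then $J$ would be $\{ 0 \}$. Hence $A_{\mathbb{N}_{\leq n_0}} \subset J$ for some $n_0$, so $1 \in J$ and $J = A$. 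Now let $I \subset A$ be an arbitrary nonzero two-sided ideal. Its closure $\overline{I}$ is a nonzero closed ideal, hence $\overline{I} = A$ by what was just shown; in particular $1 \in \overline{I}$, so there is $a \in I$ with $\| 1 - a \| < 1$. Then $a = 1 - (1 - a)$ is invertible in $A$, with $a^{-1} = \sum_{k = 0}^{\I} (1 - a)^k \in A$, and since $I$ is a left ideal containing $a$ we get $1 = a^{-1} a \in I$, whence $I = A$. This proves that $A$ is algebraically simple.

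The only genuinely delicate input — and the place where the hypothesis that $A$ locally has enough isometries is really used — is the construction and ideal-preservation of the $E_{\mathbb{N}_{\leq n}, \, \mathbb{N}}$, and this is already done in Lemma~\ref{L:UHFCondExpt}; once that is granted, everything here is routine. The one conceptual point I would flag is the passage from topological simplicity to algebraic simplicity: the naive ``a direct limit of simple algebras is simple'' argument is unavailable, because an injective homomorphism between Banach algebras need not be isometric, so one genuinely needs the unital-Banach-algebra bootstrap (invertibility of elements near $1$) rather than a purely algebraic direct-limit argument. I expect this to be the only step where care beyond bookkeeping is needed.
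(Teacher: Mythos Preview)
Your argument is correct and follows the same route as the paper: use the ideal-preserving Banach conditional expectations $E_{\mathbb{N}_{\leq n}, \mathbb{N}}$ from Lemma~\ref{L:UHFCondExpt} together with Lemma~\ref{L_2Y15_IdPLim} to show that any closed ideal meets some $A_{\mathbb{N}_{\leq n}}$ nontrivially, then use simplicity of matrix algebras to get $1$ in the ideal. The paper's proof in fact only treats closed ideals explicitly and leaves the passage from topological to algebraic simplicity unstated; your Neumann-series step (equivalently, the observation that a proper two-sided ideal in a unital Banach algebra has proper closure, since it misses the open set of invertibles) fills that in and is entirely standard.
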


\begin{proof}
Let the notation be as in Example~\ref{E_2Y14_LpUHF}
and Lemma~\ref{L:UHFCondExpt}.
For $n \in \Nz,$
set $A_n = A_{ {\mathbb{N}}_{\leq n} },$
and let
$E_n = E_{ {\mathbb{N}}_{\leq n}, {\mathbb{N}}} \colon A \to A_n.$
Let $I \subset A$ be a closed ideal.
Since $E_n$ is ideal preserving (by Lemma~\ref{L:UHFCondExpt}),
it follows from Lemma~\ref{L_2Y15_IdPLim}
that $I = {\overline{\bigcup_{n = 0}^{\I} (I \cap A_n)}}.$
If $I \cap A_n = 0$ for all $n \in \Nz,$
we thus get $I = 0.$
Otherwise, there is $n \in \Nz$ such that $I \cap A_n \neq 0.$
Since $A_n \cong M_{r_d (n)}$ is simple,
it follows that $1 \in I \cap A_n \subset I.$
So $I = A.$
\end{proof}

\begin{qst}\label{Q_3121_NSimp}
Let $p \in [1, \I).$
Is there an $L^p$~UHF algebra which is not simple?
Is there an $L^p$~UHF algebra of tensor product type
which is not simple?
\end{qst}

It seems plausible that there is at least
some nonsimple $L^p$~UHF algebra,
but the opposite outcome would also not be surprising.
New methods seem to be required to approach this question.


Finally,
we consider amenability.
The definition of an amenable Banach algebra is given in
Definition 2.1.9 of~\cite{Rnd};
see Theorem 2.2.4 of~\cite{Rnd}
for two standard equivalent conditions.

\begin{thm}\label{T_3714Amen}
Let $p \in [1, \I).$
Then any spatial $L^p$~UHF algebra~$A$ is an amenable Banach algebra.
\end{thm}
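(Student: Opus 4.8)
The plan is to show that $A$ has a bounded approximate diagonal and then quote the standard characterization of amenability in terms of such diagonals (one of the equivalent conditions in Theorem 2.2.4 of~\cite{Rnd}). The substance is to build, for each $k \in \N,$ a diagonal for $\MP{k}{p}$ of norm at most $1$; everything afterwards is a routine direct-limit argument.

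First I would construct that diagonal. The naive choice $\frac{1}{k} \sum_{i, j = 1}^{k} e_{i, j} \otimes e_{j, i}$ is a diagonal for $M_k,$ but there is no evident bound on its norm in the projective tensor product $\MP{k}{p} \widehat{\otimes} \MP{k}{p}$ that is uniform in $k,$ which makes it useless when passing to a direct limit. Instead, let $G \subset \inv (M_k)$ be the group of signed permutation matrices (Definition~\ref{D_2Y16_SgnPerm}) and put
\[
v_k = \frac{1}{\card (G)} \sum_{g \in G} g \otimes g^{-1}
 \ \in\ \MP{k}{p} \widehat{\otimes} \MP{k}{p} .
\]
By Lemma~\ref{L_2Y16_PrpSgPm} every $g \in G$ acts isometrically on $l_k^p,$ so $\| g \| = \| g^{-1} \| = 1,$ hence $\| v_k \| \leq 1$. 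The multiplication map $\pi \colon \MP{k}{p} \widehat{\otimes} \MP{k}{p} \to \MP{k}{p}$ carries $v_k$ to $\frac{1}{\card (G)} \sum_{g \in G} g g^{-1} = 1$. The substitution $g \mapsto g_0 g$ gives $g_0 \cdot v_k = v_k \cdot g_0$ for each $g_0 \in G$; since $a \mapsto a \cdot v_k - v_k \cdot a$ is linear in $a$ and $G$ spans $M_k$ (again Lemma~\ref{L_2Y16_PrpSgPm}), it follows that $a \cdot v_k = v_k \cdot a$ for all $a \in \MP{k}{p}$. Thus $v_k$ is an (exact) diagonal for $\MP{k}{p}$ of norm $\leq 1,$ and this works uniformly in $p \in [1, \I),$ with no separate argument for $p = 2$.

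Next I would transport the $v_k$ into $A$ and pass to the limit. By Theorem~\ref{T_2Y15_SpEx}(\ref{T_2Y15_SpEx_SpIsLim}) I can write $A = {\overline{\bigcup_{n = 0}^{\I} A_n}},$ where each $A_n$ is a closed unital subalgebra carrying an isometric unital algebra isomorphism onto $\MP{k_n}{p}$ for a suitable $k_n \in \N,$ and the inclusions $A_n \subset A_{n + 1} \subset A$ are isometric and unital. Let $\ps_n \colon \MP{k_n}{p} \to A$ be the composite of that isomorphism with the inclusion of $A_n$ into $A$; it is an isometric unital algebra \hm. Set $w_n = (\ps_n \widehat{\otimes} \ps_n)(v_{k_n}) \in A \widehat{\otimes} A$. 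Functoriality of the projective tensor product and $\| \ps_n \| = 1$ give $\| w_n \| \leq 1$; since $\ps_n$ is a unital algebra \hm{} intertwining the bimodule structures, $\pi_A (w_n) = \ps_n (1) = 1$ and $a \cdot w_n = w_n \cdot a$ for all $a \in \ps_n(\MP{k_n}{p}) = A_n$. Given $a \in A$ and $\ep > 0,$ choose $m$ and $a' \in A_m$ with $\| a - a' \| < \ep$; then for $n \geq m$ we have $a' \in A_n,$ so $a' \cdot w_n = w_n \cdot a',$ whence
\[
\| a \cdot w_n - w_n \cdot a \|
 \leq \| (a - a') \cdot w_n \| + \| w_n \cdot (a - a') \|
 \leq 2 \| a - a' \| < 2 \ep .
\]
Therefore $a \cdot w_n - w_n \cdot a \to 0$ for every $a \in A,$ while $\pi_A (w_n) a = a$ exactly; that is, $(w_n)_{n \in \Nz}$ is a bounded approximate diagonal for $A$. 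By Theorem 2.2.4 of~\cite{Rnd}, $A$ is amenable.

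I expect the one genuinely delicate point to be the choice of diagonal in the first step: one must notice that the textbook matrix-unit diagonal does not obviously have a norm bound uniform in the matrix size, so it cannot survive the direct limit, and replace it by the average over the isometry group; and then the fact that this group is only a proper subgroup of $\inv (M_k)$ forces one to use the linear-span statement of Lemma~\ref{L_2Y16_PrpSgPm} to see that the averaged element is a genuine diagonal rather than merely an element commuting with a subalgebra. After that, the passage to the direct limit is soft.
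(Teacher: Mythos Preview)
Your proof is correct. The overall strategy is the same as the paper's: show that each $\MP{k}{p}$ is $1$-amenable (has a diagonal of projective-tensor-norm at most~$1$), and then pass to the direct limit. The difference is packaging. The paper dispatches both steps by citation to Runde (Example 2.3.16 for the first, Proposition 2.3.17 for the second), yielding a three-line proof. You instead unpack both steps explicitly: you build the diagonal $v_k$ by averaging over the signed permutation group and use Lemma~\ref{L_2Y16_PrpSgPm} (that this group spans $M_k$ and acts isometrically on $l_k^p$) to verify it really is a norm-one diagonal, and you then carry out the approximate-diagonal direct-limit argument by hand. What this buys you is a self-contained argument using only machinery already set up in the paper, at the cost of length; what the paper's version buys is brevity and a pointer to the general framework of $C$-amenability. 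Your construction of $v_k$ is in fact essentially what lies behind Example~2.3.16 of~\cite{Rnd}, so the two arguments are not just parallel but the same content at different levels of detail.
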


\begin{proof}
By Example 2.3.16 of~\cite{Rnd},
the algebra $\MP{d}{p}$ is $1$-amenable
(in the sense of Definition 2.3.15 of~\cite{Rnd})
for all $d \in \N.$
Theorem~\ref{T_2Y15_SpEx}(\ref{T_2Y15_SpEx_SpIsLim})
implies that $A$ is a direct limit of a direct system
of such algebras, with isometric maps.
Proposition 2.3.17 of~\cite{Rnd}
therefore implies that $A$ is $1$-amenable, hence amenable.
\end{proof}

In~\cite{Ph-Lp2b},
for each $p \in [1, \I)$ and each supernatural number~$N,$
we exhibit a class ${\mathcal{C}}_{p, N}$
of $L^p$~UHF algebras of tensor product type
with supernatural number~$N,$
which contains the spatial $L^p$~UHF algebra~$A$ of type~$N,$
which contains uncountably many algebras
which are pairwise nonisomorphic as Banach algebras,
and such that an algebra $B \in {\mathcal{C}}_{p, N}$
is amenable \ifo{} $B \cong A.$

\section{Classification of spatial $L^p$~UHF algebras}\label{Sec_Diffp}

\indent
In this section,
we give the isomorphism classification of spatial $L^p$~UHF algebras.
Briefly,
let $p_1, p_2 \in [1, \I),$
let $A_1$ be a spatial $L^{p_1}$~UHF algebra,
and let $A_2$ be a spatial $L^{p_2}$~UHF algebra.
Then $A_1 \cong A_2$
\ifo{} $K_0 (A_1) \cong K_0 (A_2)$
as scaled ordered groups
(equivalently, $A_1$ and $A_2$ have the same supernatural number)
and $p_1 = p_2.$
This is a generalization of the usual classification of UHF \ca{s}.

None of the material here is needed in Section~\ref{Sec:Simplicity}.

The following theorem will be proved in~\cite{Ph7}.
It is not difficult at this stage;
it is just the fact that K-theory commutes with direct limits
of Banach algebras.
We postpone the proof to avoid taking up space here
with a discussion of K-theory.

\begin{thm}\label{T-KThyUHF}
Let $p \in [1, \I).$
Let $N$ be a supernatural number
(Definition~\ref{D_2Y15_SNat})
and let $D$ be an $L^p$~UHF algebra of type~$N$
(Definition~\ref{D_2Y14_pUHFTypeN}).
For $n \in \Nz$ define
\[
k (n) = p_1^{\min (N (1), \, n)} p_2^{\min (N (2), \, n)}
          \cdots p_n^{\min (N (n), \, n)}.
\]
Then $K_1 (D) = 0$ and
$K_0 (D)
 \cong \bigcup_{n = 1}^{\I}
     k (n)^{-1} \Z
 \subset \Q,$
via an order isomorphism which sends the class $[1_D]$
of the idempotent $1_D \in D$ to $1 \in \Q.$
\end{thm}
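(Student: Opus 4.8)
The plan is to realize $D$ as a sequential Banach-algebra direct limit of matrix algebras and then invoke continuity of topological $K$-theory. By Definition~\ref{D_2Y14_pUHFTypeN}, fix a sequence $d$ with $N_d = N,$ unital subalgebras $D_0 \subset D_1 \subset \cdots \subset D,$ and algebra isomorphisms $\sm_n \colon M_{r_d (n)} \to D_n,$ with $D = {\ov{\bigcup_{n = 0}^{\I} D_n}}.$ By the remark following Definition~\ref{D:DirSys}, this presents $D$ as $\dirlim D_n$ for the isometric direct system whose connecting maps are the inclusions $D_n \hookrightarrow D_{n + 1};$ transporting along the isomorphisms $\sm_n,$ the $n$-th connecting map becomes a unital algebra \hm{} $\ep_n \colon M_{r_d (n)} \to M_{r_d (n + 1)}.$ Now recall the topological $K$-theory of a matrix algebra: $K_0 (M_k) \cong \Z,$ generated by the class $[e]$ of a rank-one idempotent, with positive cone $\Nz \cdot [e]$ and $[1_{M_k}] = k [e];$ and $K_1 (M_k) = 0$ because $\inv (M_k) = GL_k (\C)$ is connected. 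Since $\ep_n$ is injective and unital, it carries a rank-one idempotent of $M_{r_d (n)}$ to an idempotent of rank $d (n + 1)$ in $M_{r_d (n + 1)},$ so on $K_0 \cong \Z$ it is multiplication by $d (n + 1),$ and on $K_1$ it is the zero map.

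First I would then quote the one substantive ingredient: topological $K$-theory is continuous on the category of Banach algebras and continuous homomorphisms, i.e.\ $K_j (\dirlim A_n) \cong \dirlim K_j (A_n)$ (as ordered abelian groups when $j = 0$) for any sequential direct system as in Definition~\ref{D:DirSys}. This is the Banach-algebra analogue of the classical continuity result for \ca{s}, proved the same way once $K_0$ and $K_1$ are set up via idempotents and invertibles modulo homotopy; it is the point at which the full argument is deferred to~\cite{Ph7}. Granting it, $K_1 (D) = \dirlim K_1 (M_{r_d (n)}) = 0,$ and $K_0 (D)$ is the direct limit of $\Z \xrightarrow{\times d (1)} \Z \xrightarrow{\times d (2)} \Z \xrightarrow{\times d (3)} \cdots.$ Identifying the $n$-th copy of $\Z$ with $r_d (n)^{-1} \Z \subset \Q$ by sending $[e]$ to $r_d (n)^{-1}$ turns each connecting map into an inclusion; hence $K_0 (D) \cong \bigcup_{n = 0}^{\I} r_d (n)^{-1} \Z,$ with positive cone $\bigcup_{n} r_d (n)^{-1} \Nz,$ that is, with the order inherited from~$\Q,$ and $[1_D],$ being the image of $[1_{D_0}] = 1 \in K_0 (\C),$ maps to $r_d (0)^{-1} = 1.$

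It remains to identify $\bigcup_n r_d (n)^{-1} \Z$ with $\bigcup_n k (n)^{-1} \Z.$ Both equal the subgroup of~$\Q$ consisting of those $q$ for which, for every prime~$t,$ the power of $t$ in the denominator of $q$ (in lowest terms) is at most $N (t)$: for the first union because the exponent of $t$ in $r_d (n)$ is nondecreasing in $n$ with supremum $N_d (t) = N (t)$ by the definition of $N_d$ in Definition~\ref{D_2Y15_SNat}; for the second because the exponent of $t$ in $k (n)$ equals $\min (N (t), n)$ for all sufficiently large~$n,$ whose supremum over~$n$ is again $N (t).$ This last step is elementary. The main obstacle --- indeed the only substantive point --- is the continuity statement, which is exactly why the detailed proof is postponed to~\cite{Ph7}.
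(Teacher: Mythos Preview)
Your proposal is correct and matches the approach the paper itself indicates: the paper does not give a proof here but states explicitly, just before the theorem, that it ``is just the fact that K-theory commutes with direct limits of Banach algebras'' and postpones the details to~\cite{Ph7}. You have carried out precisely this outline---identifying $D$ with a direct limit of matrix algebras, computing $K_*$ of each term and the induced maps, invoking continuity, and then matching the resulting subgroup of~$\Q$ with the one described via the~$k(n)$---and you correctly flag continuity as the one step whose detailed justification is deferred.
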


As a corollary, we get the K-theoretic classification of
spatial $L^p$~UHF algebras.

\begin{thm}\label{T-ClassUHF}
Let $p \in [1, \I).$
Let $N_1$ and $N_2$ be supernatural numbers
(Definition~\ref{D_2Y15_SNat}).
Let $D_1$ be a spatial $L^p$~UHF algebra of type~$N_1$
and let $D_2$ be a spatial $L^p$~UHF algebra of type~$N_2.$
Then \tfae:
\begin{enumerate}
\item\label{T-ClassUHF-AlgIso}
There is an isometric algebra isomorphism from $D_1$ to~$D_2.$
\item\label{T-ClassUHF-AlgHme}
There is a \ct{} algebra isomorphism from $D_1$ to~$D_2.$
\item\label{T-ClassUHF-RingIso}
$D_1$ and $D_2$ are isomorphic as rings.
\item\label{T-ClassUHF-KIso}
There is an isomorphism $\et \colon K_0 (D_1) \to K_0 (D_2)$
such that $\et ([1_{D_1}]) = [1_{D_2}].$
\item\label{T-ClassUHF-Type}
$N_1 = N_2.$
\end{enumerate}
\end{thm}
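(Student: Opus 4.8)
The plan is to verify the implications in the cycle $(\ref{T-ClassUHF-AlgIso}) \Rightarrow (\ref{T-ClassUHF-AlgHme}) \Rightarrow (\ref{T-ClassUHF-RingIso}) \Rightarrow (\ref{T-ClassUHF-KIso}) \Rightarrow (\ref{T-ClassUHF-Type}) \Rightarrow (\ref{T-ClassUHF-AlgIso})$. The first two implications are trivial: an isometric isomorphism is in particular continuous, and a continuous algebra isomorphism is in particular a ring isomorphism (forgetting the complex linear and topological structure). The implication $(\ref{T-ClassUHF-Type}) \Rightarrow (\ref{T-ClassUHF-AlgIso})$ is the substantive ``existence and uniqueness'' half, and it follows directly from Theorem~\ref{T:UHFIsoCrit} applied with $D = \C$: by Theorem~\ref{T_2Y15_SpEx}(\ref{T_2Y15_SpEx_SpIsLim}) each $D_i$ is the direct limit of a spatial direct system of type $N_i$, and if $N_1 = N_2$ then Theorem~\ref{T:UHFIsoCrit}(\ref{T:UHFIsoCrit_Iso}) (taking $D = \C$, so $\dirlim A_n \otimes_p D = A$ and likewise for $B$) produces an isometric isomorphism $D_1 \cong D_2$.

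The implication $(\ref{T-ClassUHF-KIso}) \Rightarrow (\ref{T-ClassUHF-Type})$ is where Theorem~\ref{T-KThyUHF} enters. By that theorem, $K_0(D_i)$ is order isomorphic, via an isomorphism carrying $[1_{D_i}]$ to $1$, to the subgroup $G_{N_i} = \bigcup_{n=1}^{\I} k_i(n)^{-1}\Z \subset \Q$, where $k_i$ is built from $N_i$ as in the statement of Theorem~\ref{T-KThyUHF}. An isomorphism $\et \colon K_0(D_1) \to K_0(D_2)$ with $\et([1_{D_1}]) = [1_{D_2}]$ therefore induces a group isomorphism $G_{N_1} \to G_{N_2}$ fixing $1 \in \Q$. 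I would then argue, exactly as in the classical UHF case, that such a subgroup of $\Q$ containing $1$ determines and is determined by a supernatural number: for a prime $t$, the value $N(t)$ is recovered from $G_N$ as $\sup\{ k \in \Nz \colon t^{-k} \in G_N \}$ (with the convention that the sup is $\I$ if unbounded), and this quantity is manifestly preserved by any group isomorphism $G_{N_1} \to G_{N_2}$ fixing~$1$. Hence $N_1 = N_2$.

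The only implication requiring genuine care is $(\ref{T-ClassUHF-RingIso}) \Rightarrow (\ref{T-ClassUHF-KIso})$: a mere ring isomorphism $D_1 \to D_2$ need not be continuous or complex-linear, so one cannot immediately invoke functoriality of topological $K$-theory. I expect this to be the main obstacle. The plan is to use that $K_0$ of a unital ring is defined purely algebraically (via equivalence classes of idempotents in matrix rings, Grothendieck-completed), so any unital ring isomorphism induces an isomorphism on $K_0$ sending $[1]$ to $[1]$; what is not automatic is that this algebraic $K_0$ agrees, as a \emph{scaled ordered} group, with the topological $K_0$ appearing in Theorem~\ref{T-KThyUHF}. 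For a Banach algebra that is an increasing union of full matrix subalgebras $A_n$ with dense union, every idempotent in a matrix algebra over $A$ is, up to conjugation by an invertible element close to $1$, an idempotent over some $A_n$ (a standard perturbation argument), so the algebraic $K_0$ is already the direct limit of the $K_0(A_n) \cong \Z$, which is exactly the topological answer, and the positive cone and the class of $1$ are detected by which multiples of the generator arise as classes of honest idempotents in $A$ --- an order-theoretic datum preserved by ring isomorphism. Thus one shows the algebraic $[1]$-scaled ordered $K_0$ of $D_i$ coincides with $G_{N_i}$, and then proceeds as in the previous paragraph. Since the paper flags that Theorem~\ref{T-KThyUHF} (and this circle of ideas) is deferred to~\cite{Ph7}, I would at this point cite that paper for the identification of algebraic with topological $K_0$ in the ordered, scaled sense, and otherwise keep the argument at the level of the direct-limit/perturbation sketch above.
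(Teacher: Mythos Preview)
Your proposal is correct and follows exactly the same cycle of implications as the paper. The only difference is that you overcomplicate $(\ref{T-ClassUHF-RingIso}) \Rightarrow (\ref{T-ClassUHF-KIso})$: the paper dispatches this in one line by observing that $K_0$ of a Banach algebra is defined purely in terms of idempotents in matrix rings and algebraic Murray--von Neumann equivalence, hence depends only on the ring structure, so a ring isomorphism automatically induces an isomorphism on $K_0$ carrying $[1_{D_1}]$ to $[1_{D_2}]$. No perturbation argument or comparison of algebraic versus topological $K_0$ is required, and note that condition~(\ref{T-ClassUHF-KIso}) does not ask for preservation of the order structure --- the group isomorphism together with $[1] \mapsto [1]$ already forces $G_{N_1} = G_{N_2}$ as subgroups of~$\Q$, since any group homomorphism between such subgroups fixing~$1$ is the identity.
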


\begin{proof}
It is obvious that
(\ref{T-ClassUHF-AlgIso}) implies~(\ref{T-ClassUHF-AlgHme})
and that
(\ref{T-ClassUHF-AlgHme}) implies~(\ref{T-ClassUHF-RingIso}).
That
(\ref{T-ClassUHF-RingIso}) implies~(\ref{T-ClassUHF-KIso})
follows from the fact that $K_0$ of a Banach algebra
only depends on the ring structure.
Using Theorem~\ref{T-KThyUHF},
it is standard (and easy to check)
that (\ref{T-ClassUHF-KIso}) implies~(\ref{T-ClassUHF-Type}).
The implication from (\ref{T-ClassUHF-Type})
to~(\ref{T-ClassUHF-AlgIso})
is Theorem~\ref{T:UHFIsoCrit}
(with $D = \C$ there).
\end{proof}

\begin{thm}\label{T_2Y19UNOI}
Let $p \in [1, \I).$
Let $N_1$ and $N_2$ be supernatural numbers
(Definition~\ref{D_2Y15_SNat}).
Let $D_1$ be an $L^p$~UHF algebra of type~$N_1$
and let $D_2$ be an $L^p$~UHF algebra of type~$N_2.$
If $N_1 \neq N_2,$
then $D_1 \not\cong D_2.$
\end{thm}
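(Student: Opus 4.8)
The plan is to derive everything from the $K$-theory computation of Theorem~\ref{T-KThyUHF}, using the standard fact (already invoked in the proof of Theorem~\ref{T-ClassUHF}) that $K_0$ of a Banach algebra depends only on its underlying ring. Suppose, for a contradiction, that there is an isomorphism $\ph \colon D_1 \to D_2$; we will use only that $\ph$ is a ring isomorphism, and neither continuity of $\ph$ nor the order structure on $K_0$ will play any role. Since a ring isomorphism of unital rings carries the identity to the identity, $\ph$ induces a group isomorphism $\ph_* \colon K_0 (D_1) \to K_0 (D_2)$ with $\ph_* \big( [1_{D_1}] \big) = [1_{D_2}].$ Now apply Theorem~\ref{T-KThyUHF} to $D_1$ and to $D_2.$ For a supernatural number $N$ and $n \in \Nz,$ write $k_N (n) = p_1^{\min (N (1), \, n)} \cdots p_n^{\min (N (n), \, n)}$ (with $p_j$ the $j$-th prime) and set $G_N = \bigcup_{n = 1}^{\I} k_N (n)^{-1} \Z \subset \Q.$ Theorem~\ref{T-KThyUHF} supplies group isomorphisms $K_0 (D_i) \cong G_{N_i}$ sending $[1_{D_i}]$ to $1 \in \Q,$ so composing with $\ph_*$ yields a group isomorphism $\ps \colon G_{N_1} \to G_{N_2}$ with $\ps (1) = 1.$

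The next step is to observe that any such $\ps$ is forced to be the restriction of the identity map of $\Q.$ Indeed, additivity and $\ps (1) = 1$ give $\ps (n) = n$ for all $n \in \Z,$ and for $a / b \in G_{N_1}$ with $a \in \Z$ and $b \in \N$ the relation $b \cdot (a / b) = a$ forces $b \, \ps (a / b) = \ps (a) = a,$ whence $\ps (a / b) = a / b.$ Since $\ps$ is onto $G_{N_2},$ this gives $G_{N_1} = G_{N_2}.$ Finally, $G_N$ recovers $N$: a rational $a / b$ in lowest terms lies in $G_N$ precisely when, for every prime $t,$ the exponent of $t$ in the prime factorization of $b$ is at most $N (t),$ so taking $b = t^k$ shows $N (t) = \sup \big( \{ k \in \Nz \colon t^{- k} \in G_N \} \big)$ for every prime $t.$ Hence $G_{N_1} = G_{N_2}$ forces $N_1 = N_2,$ contradicting the hypothesis $N_1 \neq N_2,$ and the theorem follows.

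I do not anticipate any serious obstacle: the substance is entirely contained in Theorem~\ref{T-KThyUHF}, and the only point worth emphasizing is that the abelian group $K_0 (D)$ together with the distinguished class $[1_D]$ — with no appeal to continuity of the isomorphism or to the scaled order — already distinguishes the supernatural numbers. The elementary verification that a group isomorphism between subgroups of $\Q$ fixing $1$ is the identity, and that $G_N \mapsto N$ is injective, are the only remaining routine details.
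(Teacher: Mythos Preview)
Your proposal is correct and takes essentially the same approach as the paper, which simply says the result ``is an easy consequence of Theorem~\ref{T-KThyUHF}.'' You have merely written out in full the routine verification (also invoked in the proof of Theorem~\ref{T-ClassUHF}, where the implication (\ref{T-ClassUHF-KIso})~$\Rightarrow$~(\ref{T-ClassUHF-Type}) is called ``standard (and easy to check)'') that the pair $\big( K_0 (D), [1_D] \big)$ recovers the supernatural number.
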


We do not assume that $D_1$ or $D_2$ is spatial.

\begin{proof}[Proof of Theorem~\ref{T_2Y19UNOI}]
This is an easy consequence of Theorem~\ref{T-KThyUHF}.
\end{proof}

We now turn to the comparison of $L^p$~UHF algebras
for different values of~$p.$
For distinct $p_1, p_2 \in [1, \I),$
no spatial $L^{p_1}$~UHF algebra
can be isomorphic to any spatial $L^{p_2}$~UHF algebra.
For some choices of $p_1$ and~$p_2,$
no spatial $L^{p_1}$~UHF algebra
can be isomorphic to any $L^{p_2}$~UHF algebra,
spatial or not.

Our results are not as satisfactory as those for $\OP{d}{p}$
in~\cite{Ph5},
where it is shown (Corollary~9.3 of~\cite{Ph5})
that for different values of~$p$
there are no nonzero \ct{} \hm{s}
between such algebras.
There, a \rpn{} on a Banach space~$E$
gave an isomorphic embedding of $l^p (\N)$ in~$E$
(Lemma~9.1 of~\cite{Ph5}).
Here, we get something weaker:
$l^p (\N)$ is crudely finitely representable in~$E$
in the sense of Definition~\ref{D_2Y19FinRep} below.
This property is still strong enough to show that
if $A_j$ is a spatial $L^{p_j}$~UHF algebra for $j = 1, 2,$
with $p_1 \neq p_2,$
then in at least one direction
there is no nonzero \ct{} \hm{}
between $A_1$ and~$A_2.$

\begin{dfn}[Definitions 11.1.1 and 11.1.5
   of~\cite{AK}]\label{D_2Y19FinRep}
Let $E$ and $F$ be infinite dimensional Banach spaces,
and let $M > 1.$
Then $E$ is
{\emph{crudely finitely representable in $F$ with constant~$M$}}
if for every \fd{} subspace $E_0 \subset E$
there exist a \fd{} subspace $F_0 \subset F$
and an invertible linear map $a \colon E_0 \to F_0$
such that $\| a \| \cdot \| a^{-1} \| < M.$
For real Banach spaces we use real linear maps~$a,$
and for complex Banach spaces we use complex linear maps~$a.$
We say that $E$ is
{\emph{finitely representable in $F$}}
if $E$ is crudely finitely representable in $F$ with constant~$M$
for every $M > 1.$
\end{dfn}

We state several results
that are essentially in Chapter~11 of~\cite{AK}.
We are grateful to Bill Johnson for pointing out this reference.
We warn that the standing assumption in~\cite{AK}
is that all Banach spaces are real unless otherwise specified.
In particular, the theorems we cite,
as stated,
apply only to real Banach spaces.

\begin{lem}\label{L_2Y19lpFRep}
Let $p \in [1, \I).$
Let $F$ be a Banach space
and suppose that there are $M_1, M_2 \in (0, \I)$
such that for every $d \in \N$ there is a linear map
$a \colon l_d^p \to F$
such that for all $\xi \in l_d^p$ we have
\[
M_1^{-1} \| \xi \| \leq \| a \xi \| \leq M_2 \| \xi \|.
\]
Then for every $\ep > 0,$
the space $l^p (\N)$
is crudely finitely representable in $F$ with constant $M_1 M_2 + \ep.$
\end{lem}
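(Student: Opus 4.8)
The plan is to verify Definition~\ref{D_2Y19FinRep} directly. Fix a \fd{} subspace $E_0 \subset l^p (\N)$, say with $k = \dim (E_0)$, and fix $\ep > 0$; I need to produce a \fd{} subspace $F_0 \subset F$ and an invertible linear map $b \colon E_0 \to F_0$ with $\| b \| \cdot \| b^{-1} \| < M_1 M_2 + \ep$. Since the constants witnessing crude finite representability are allowed to depend on~$E_0$, it will be harmless that the precision chosen below depends on~$k$. (Note also that $M_1^{-1} \leq M_2$, so $M_1 M_2 + \ep > 1$ as required, and the hypothesis, via the lower bounds on the maps $a$, forces $F$ to be infinite dimensional.) The idea is to first push $E_0$ into $l_d^p$ for a suitably large~$d$ with almost no distortion, and then transport that copy into~$F$ using the given map $a \colon l_d^p \to F$.

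For the first step I would use an Auerbach basis $x_1, \ldots, x_k$ of $E_0$, so that $\| x_j \| = 1$ and the associated coordinate functionals have norm~$1$; thus for $x = \sum_j a_j x_j$ one has $|a_j| \leq \| x \|$. Let $T_d \colon l^p (\N) \to l_d^p$ be the coordinate projection onto $\{ 1, 2, \ldots, d \}$. Since each $x_j$ lies in $l^p (\N)$, we have $\| x_j - T_d x_j \| \to 0$ as $d \to \I$, so for any prescribed $\eta \in (0,1)$ I may fix $d$ with $\sum_{j = 1}^k \| x_j - T_d x_j \| < \eta$. Then, for every $x \in E_0$,
\[
\| x - T_d x \| \;\leq\; \sum_{j = 1}^k |a_j| \, \| x_j - T_d x_j \| \;\leq\; \eta \| x \|,
\]
and hence $(1 - \eta) \| x \| \leq \| T_d x \| \leq (1 + \eta) \| x \|$ for all $x \in E_0$. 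In particular $T_d$ restricts to an injective linear map of $E_0$ onto a \fd{} subspace of $l_d^p$; that is, $E_0$ is $(1 + \eta)$-isomorphic to a subspace of $l_d^p$.

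For the second step I would invoke the hypothesis at this value of~$d$ to obtain $a \colon l_d^p \to F$ with $M_1^{-1} \| \xi \| \leq \| a \xi \| \leq M_2 \| \xi \|$ for all $\xi \in l_d^p$, and then set $b = a \circ (T_d |_{E_0})$ and $F_0 = b (E_0)$. Both factors are injective, so $b$ is injective and $F_0$ is a \fd{} subspace of~$F$. Chaining the estimates gives $M_1^{-1} (1 - \eta) \| x \| \leq \| b x \| \leq M_2 (1 + \eta) \| x \|$ for all $x \in E_0$, hence
\[
\| b \| \cdot \| b^{-1} \| \;\leq\; M_1 M_2 \cdot \frac{1 + \eta}{1 - \eta}.
\]
Since $(1 + \eta)/(1 - \eta) \to 1$ as $\eta \to 0^+$, choosing $\eta$ small enough at the outset (depending only on $M_1$, $M_2$, and~$\ep$) makes the right-hand side less than $M_1 M_2 + \ep$, which completes the verification.

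I do not expect a genuine obstacle here; the argument is a routine perturbation-plus-composition. The two points that need care are the order of the quantifiers — fix $E_0$ and $\ep$ first, then the distortion bound~$\eta$, then the dimension~$d$, and only then apply the hypothesis — and the elementary fact that a \fd{} subspace of $l^p (\N)$ embeds $(1 + \eta)$-isomorphically into $l_d^p$ for~$d$ large, where the Auerbach basis is convenient precisely because it controls all the constants in terms of $k = \dim (E_0)$.
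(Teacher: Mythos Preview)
Your argument is correct. The paper's own proof is much shorter because it outsources the main step: it observes that the hypothesis immediately yields the condition of Definition~\ref{D_2Y19FinRep} for the special subspaces $E_0$ spanned by finitely many standard basis vectors of $l^p(\N)$ (with constant $M_1 M_2 + \tfrac{\ep}{2}$), and then invokes Lemma~11.1.6(ii) of Albiac--Kalton to pass from these special subspaces to arbitrary finite-dimensional subspaces at the cost of an additional~$\tfrac{\ep}{2}$. Your truncation-plus-Auerbach argument is precisely a self-contained proof of that passage in the case at hand, so the mathematical content is the same; what you gain is that your version does not depend on an external reference (and in particular avoids the real-versus-complex scalar issue the paper has to comment on), while the paper's version is terser. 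One incidental sharpening: since the coordinate projection $T_d$ on $l^p(\N)$ has norm~$1$, you actually have $\|T_d x\| \leq \|x\|$, so your final bound can be improved to $\|b\|\cdot\|b^{-1}\| \leq M_1 M_2/(1-\eta)$, though this makes no difference to the conclusion.
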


\begin{proof}
The hypotheses imply that
the condition of Definition~\ref{D_2Y19FinRep}
is satisfied with $E = l^p (\N)$ and
with $M = M_1 M_2 + \frac{\ep}{2},$
except that $E_0$ is restricted to being the span
of finitely many standard basis vectors in $l^p (\N).$
Lemma 11.1.6(ii) of~\cite{AK}
is also valid in the complex case (with the same proof),
and now implies that $l^p (\N)$
is crudely finitely representable in $F$ with constant $M_1 M_2 + \ep.$
\end{proof}

\begin{thm}\label{T_2Y19NFinR}
Let $p, r \in [1, \I)$ with $p \neq r.$
Suppose that there are~$M$
and a \sfm{} $\XBM$
such that $L^r (X, \mu)$ is separable
and $l^p (\N)$ is crudely finitely representable in $L^r (X, \mu)$
with constant~$M.$
Then $1 \leq r < p \leq 2$ or $2 = p < r.$
\end{thm}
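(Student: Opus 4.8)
The plan is to reduce the statement to the local (finite-dimensional) theory of $L^r$-spaces: Rademacher type and cotype dispose of every case but one, and for that one I would quote a deeper result from Chapter~11 of~\cite{AK}.

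First I would pass to real scalars, so that the cited results apply verbatim. The real space $\ell^p(\N)$ sits isometrically inside the complex $\ell^p(\N)$ as the closed real span of the standard basis vectors; and the complex space $L^r(X, \mu),$ viewed as a real Banach space, is isomorphic (with a constant depending only on~$r$) to a real $L^r$-space, since writing $f = u + i v$ with $u, v$ real gives $\| f \|_r = \| (u^2 + v^2)^{1/2} \|_r,$ and the Euclidean norm on~$\R^2$ is equivalent to the supremum norm, whence this space is isomorphic, up to such a constant, to $L^r(X, \mu) \oplus_r L^r(X, \mu) = L^r(X \sqcup X, \, \mu \sqcup \mu).$ Composing a complex realization of a given real finite-dimensional subspace of $\ell^p(\N)$ with this isomorphism shows that the real $\ell^p(\N)$ is crudely finitely representable, with some finite constant, in a separable real $L^r$-space, which is infinite dimensional because $\ell^p(\N)$ is. From here on all spaces are real.

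Next I would use that Rademacher type and cotype are determined by finite-dimensional configurations, so if $\ell^p(\N)$ is crudely finitely representable in $L^r(X, \mu)$ then $\ell^p(\N)$ has whatever type and cotype the infinite-dimensional space $L^r(X, \mu)$ has, namely type $\min(r, 2)$ and cotype $\max(r, 2).$ Since $\ell^p(\N)$ has no type above $\min(p, 2)$ and no cotype below $\max(p, 2),$ this forces $\min(r, 2) \leq \min(p, 2)$ and $\max(p, 2) \leq \max(r, 2).$ If $p < 2,$ the first inequality gives $\min(r, 2) \leq p < 2,$ so $r < 2$ and $r \leq p,$ and as $p \neq r$ we are in the case $1 \leq r < p \leq 2.$ If $p = 2,$ then $r \neq 2,$ and $r < 2$ gives $1 \leq r < 2 = p \leq 2$ while $r > 2$ gives $2 = p < r.$ If $p > 2,$ the second inequality gives $p \leq \max(r, 2),$ so $r > 2$ and $p \leq r,$ and as $p \neq r$ only the case $2 < p < r$ survives.

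The hard part will be ruling out the case $2 < p < r,$ which type and cotype cannot reach (there $\min(p, 2) = \min(r, 2) = 2$ and $\max(r, 2) = r > p = \max(p, 2)$). What is needed is the fact that $\ell^p$ is \emph{not} crudely finitely representable in $L^r$ when $2 < p < r;$ this is a theorem about the local structure of $L^r$-spaces, which I would quote from Chapter~11 of~\cite{AK}. (One route to it: an ultraproduct of $L^r$-spaces is again an abstract $L^r(\nu)$-space, so crude finite representability of $\ell^p$ in $L^r$ would yield an isometric embedding of $\ell^p$ into some $L^r(\nu),$ which is classically impossible for $2 < p < r$ via $p$-stable random variables.) This is precisely why the reduction to real scalars was made in the first step. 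Since the case $p > 2$ therefore cannot occur, in every remaining case the hypotheses give $1 \leq r < p \leq 2$ or $2 = p < r,$ completing the proof.
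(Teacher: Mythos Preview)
Your argument is correct, and it takes a genuinely different route from the paper's proof. The paper proceeds uniformly: after the same reduction to real scalars, it reduces (via separability, Proposition~11.1.7, Proposition~11.1.13, and Theorem~11.1.8 of~\cite{AK}) from crude finite representability of $\ell^p$ in $L^r(X,\mu)$ to an \emph{isomorphic embedding} of $\ell^p$ in $L^r[0,1]$, and then invokes the full embedding characterization (Theorem~6.4.19 of~\cite{AK}) in one stroke. You instead dispatch all cases with $p \leq 2$ by the inheritance of Rademacher type and cotype under crude finite representability, which is more elementary and does not use separability at all; only the residual case $2 < p < r$ then needs the deeper local theory, and there your ultraproduct sketch is essentially equivalent to the paper's Proposition~11.1.13 plus Theorem~11.1.8 route (both convert finite representability into an actual embedding into some $L^r$-space). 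Your approach buys modularity and a cleaner treatment of the easy cases; the paper's buys uniformity, treating all cases with a single citation.

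One small inaccuracy: your parenthetical says the non-embedding of $\ell^p$ in $L^r$ for $2 < p < r$ goes ``via $p$-stable random variables.'' That is the tool for the \emph{positive} direction (embedding $\ell^p$ into $L^r$ when $r \leq p \leq 2$), and $p$-stable laws do not exist for $p > 2$. The non-embedding for $2 < p < r$ is usually obtained from the Kadec--Pe\l czy\'nski dichotomy (every normalized basic sequence in $L^r$, $r > 2$, has a subsequence equivalent to the unit vector basis of $\ell^2$ or of $\ell^r$), which is in Chapter~6 of~\cite{AK}; this is what Theorem~6.4.19 encodes.
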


\begin{proof}
To distinguish real and complex $L^p$~spaces,
we will write $L_{\R}^p (X, \mu)$ and $l_{\R}^p (S)$
for the real versions of these spaces.
Also, in this proof,
all linear maps will be assumed to be merely real linear.

Since $l_{\R}^p (\N)$ is real isometrically isomorphic to
a subspace of $l^p (\N),$
it follows that, regarding both spaces as real Banach spaces,
$l_{\R}^p (\N)$ is crudely finitely representable in $L^r (X, \mu)$
in the real sense,
with constant~$M.$

{}From now through the end of the proof,
all Banach spaces will be taken as real,
and (crudely) finitely representable will be in the real sense.
Let $\mu \amalg \mu$ be the obvious measure on
the disjoint union $X \amalg X$ of two copies of~$X.$
The space $L^r (X, \mu)$ is isomorphic
(not necessarily isometrically)
to $L_{\R}^r (X \amalg X, \, \mu \amalg \mu).$
Therefore $l_{\R}^p (\N)$ is crudely finitely representable
in $L_{\R}^r (X \amalg X, \, \mu \amalg \mu)$
(with a different constant).

Since $L_{\R}^r (X \amalg X, \, \mu \amalg \mu)$ is separable,
it follows from Proposition~1 in Chapter III.A of~\cite{Wj}
that $L_{\R}^r (X \amalg X, \, \mu \amalg \mu)$ is isometrically
isomorphic to a subspace of the $L^r$~sum
$L_{\R}^r ([0, 1]) \oplus_r l_{\R}^r (\N).$
Proposition 11.1.7 of~\cite{AK}
tells us that $L_{\R}^r ([0, 1])$
is finitely representable in $l_{\R}^r (\N),$
and it is now immediate to check that
$L_{\R}^r ([0, 1]) \oplus_r l_{\R}^r (\N)$
is finitely representable in
$l_{\R}^r (\N) \oplus_r l_{\R}^r (\N) \cong l_{\R}^r (\N).$
It follows (by essentially the same proof
as that of Proposition 11.1.4 of~\cite{AK})
that $l_{\R}^p (\N)$ is crudely finitely representable
in $l_{\R}^r (\N)$
(with some constant).
By Proposition 11.1.13 of~\cite{AK},
there is a Banach space $E$ isomorphic (not necessarily isometrically)
to $l_{\R}^p (\N)$
which is finitely representable in $l_{\R}^r (\N),$
and Theorem 11.1.8 of~\cite{AK}
then implies that $E$ is isometrically isomorphic to
a subspace of $L_{\R}^r ([0, 1]).$
That is, $l_{\R}^p (\N)$ can be isomorphically embedded
in $L_{\R}^r ([0, 1]).$
The conditions on $p$ and $r$ in the statement
now follow from Theorem 6.4.19 of~\cite{AK}.
\end{proof}

\begin{thm}\label{T_2Y18LpInUHF}
Let $p \in [1, \I).$
Let $A$ be a spatial $L^p$~UHF algebra,
let $E$ be a Banach space,
and let $\ps \colon A \to L (E)$
be a nonzero \ct{} (not necessarily unital) \hm.
Then for every $\ep > 0,$
the space $l^p (\N)$
is crudely finitely representable in $E$
with constant $\| \ps \|^2 + \ep.$
\end{thm}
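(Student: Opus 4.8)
The plan is to verify the hypotheses of \Lem{L_2Y19lpFRep}: for each $d \in \N$ I will produce a linear map $a \colon l_d^p \to E$ with
\[
\|\ps\|^{-1} \|\xi\|_p \leq \|a \xi\| \leq \|\ps\| \, \|\xi\|_p \quad \text{for all } \xi \in l_d^p ,
\]
and then \Lem{L_2Y19lpFRep}, applied with $M_1 = M_2 = \|\ps\|,$ gives crude finite representability of $l^p (\N)$ in $E$ with constant $\|\ps\|^2 + \ep$ for every $\ep > 0.$ The space $E$ is automatically infinite dimensional: $A$ is infinite dimensional, and by simplicity of $A$ (Theorem~\ref{T:LpUHFSimple}) the nonzero \hm{} $\ps$ is injective, so $\ps$ already embeds each $l_d^p$ into $E$ via the map below.

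To build $a,$ note that $A$ has type~$N$ for some supernatural number~$N,$ so by Theorem~\ref{T_2Y15_SpEx}(\ref{T_2Y15_SpEx_SpIsLim}) it is the direct limit of a spatial direct system of type~$N$; in particular $A$ contains, for each $n,$ an isometric unital copy of $\MP{k_n}{p}$ with $k_n \to \I.$ Given $d,$ fix $n$ with $k := k_n \geq d,$ let $(e_{i, j})_{i, j = 1}^k$ be the standard matrix units of the corresponding copy $\MP{k}{p} \S A,$ and regard $l_d^p$ as the span of the first $d$ standard basis vectors of $l_k^p.$ Injectivity of $\ps$ gives $\ps (e_{1, 1}) \neq 0$; choosing $v \in E$ with $\ps (e_{1, 1}) v \neq 0$ and putting $\xi_0 = \ps (e_{1, 1}) v / \| \ps (e_{1, 1}) v \|,$ idempotence of $e_{1, 1}$ yields $\ps (e_{1, 1}) \xi_0 = \xi_0$ and $\|\xi_0\| = 1.$ Define $a \colon l_k^p \to E$ by $a \xi = \ps (C_\xi) \xi_0,$ where $C_\xi = \sum_{i = 1}^k \xi_i e_{i, 1}$ is the ``column matrix'' of~$\xi.$

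The upper estimate is immediate: viewed in $\MP{k}{p} = L (l_k^p),$ the operator $C_\xi$ acts by $v \mapsto v_1 \xi,$ so $\| C_\xi \| = \|\xi\|_p,$ and, since $\MP{k}{p}$ embeds isometrically in $A,$ we get $\| a \xi \| \leq \|\ps\| \, \| C_\xi \| \, \|\xi_0\| = \|\ps\| \, \|\xi\|_p.$ The lower estimate is the step that needs a trick, because applying a single $\ps (e_{1, j})$ to $a \xi$ recovers only $\sup_j |\xi_j|.$ Instead, with $q$ conjugate to~$p,$ for $\zeta \in l_k^q$ set $R_\zeta = \sum_{j = 1}^k \zeta_j e_{1, j},$ the ``row matrix'' of~$\zeta$; it acts on $l_k^p$ by $v \mapsto \big( \sum_j \zeta_j v_j \big) \dt_1,$ so $\| R_\zeta \| = \|\zeta\|_q$ in $\MP{k}{p},$ and it satisfies $R_\zeta C_\xi = \big( \sum_j \zeta_j \xi_j \big) e_{1, 1}.$ Applying $\ps$ and evaluating at $\xi_0$ gives
\[
\Big| \sum_j \zeta_j \xi_j \Big| = \big\| \ps (R_\zeta) \ps (C_\xi) \xi_0 \big\| \leq \| \ps (R_\zeta) \| \cdot \| a \xi \| \leq \|\ps\| \, \|\zeta\|_q \, \| a \xi \| ,
\]
and taking the supremum over $\zeta \in l_k^q$ with $\|\zeta\|_q \leq 1$ yields $\|\xi\|_p \leq \|\ps\| \, \| a \xi \|.$ Restricting $a$ to $l_d^p$ then gives the two-sided bound above with constants independent of~$d,$ so \Lem{L_2Y19lpFRep} applies and the theorem follows. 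The only genuinely delicate point is the use of the row matrices $R_\zeta$ (rather than single matrix units) to obtain the sharp constant $\|\ps\|^2$; everything else is bookkeeping with matrix units and with the isometry of the inclusions $\MP{k_n}{p} \hookrightarrow A.$
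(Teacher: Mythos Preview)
Your proof is correct and follows essentially the same approach as the paper's: both verify the hypotheses of Lemma~\ref{L_2Y19lpFRep} with $M_1 = M_2 = \|\ps\|$ by choosing a unit vector fixed by $\ps(e_{1,1}),$ mapping $\xi$ to $\ps(C_\xi)\xi_0$ via the column matrix, and obtaining the lower bound through the row matrices $R_\zeta$ (the paper picks a single $\zeta$ attaining equality in H\"older rather than taking a supremum, but this is the same computation). The only cosmetic differences are that you work in $\MP{k}{p}$ with $k \geq d$ and restrict, whereas the paper observes directly that $A$ contains an isometric copy of $\MP{d}{p}$ for every~$d,$ and that you invoke simplicity of~$A$ to get $\ps(e_{1,1}) \neq 0$ while the paper uses simplicity of~$\MP{d}{p}.$
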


\begin{proof}
We verify the condition of Lemma~\ref{L_2Y19lpFRep}
with $M_1 = M_2 = \| \ps \|.$

We first claim that if $d \in \N$ and $\ps \colon \MP{d}{p} \to E$
is a nonzero \hm,
then there exists $v \in L (l_d^p, E)$
such that for all $\xi \in l_d^p$ we have
\[
\| \ps \|^{-1} \cdot \| \xi \|
 \leq \| v \xi \|
 \leq \| \ps \| \cdot \| \xi \|.
\]
We follow the method of proof of Lemma~9.1 of~\cite{Ph5}.
Let $q \in (1, \I]$ satisfy
$\frac{1}{p} + \frac{1}{q} = 1.$
Let $(e_{j, k})_{j, k = 1, 2, \ldots, d}$
be the standard system of matrix units for~$M_d.$
For $\ld = (\ld_1, \ld_2, \ldots, \ld_d) \in \C^d,$
we define $s (\ld), t (\ld) \in \MP{d}{p}$ by
\[
s (\ld) = \sum_{j = 1}^d \ld_j e_{j, 1}
\andeqn
t (\ld) = \sum_{j = 1}^d \ld_j e_{1, j}.
\]
For $\et, \mu \in \C^d,$
let $\om_{\et} \colon \C^d \to \C$
be the linear functional
$\om_{\et} (\xi) = \sum_{j = 1}^d \et_j \xi_j$
for $\xi \in \C^d,$
and let $b_{\mu, \et} \in M_d$ be the rank one operator given by
$b_{\mu, \et} \xi = \om_{\et} (\xi) \mu$
for $\xi \in \C^d.$
(See Example~2.5 of~\cite{Ph5}.)
Taking $\et = (1, 0, \ldots, 0)$ and $\mu = \ld$
gives $b_{\mu, \et} = s (\ld),$
and taking $\et = \ld$ and $\mu = (1, 0, \ldots, 0)$
gives $b_{\mu, \et} = t (\ld).$
According to Example~2.5 of~\cite{Ph5},
we therefore have
\[
\| s (\ld) \|
 = \| \ld \|_p \| (1, 0, \ldots, 0) \|_q
 = \| \ld \|_p
\andeqn
\| t (\ld) \|
 = \| (1, 0, \ldots, 0) \|_p \| \ld \|_q
 = \| \ld \|_q.
\]
(These can also be checked directly.)
Also, if $\ld, \mu \in \C^d,$
then
$t (\mu) s (\ld) = \om_{\mu} ( \ld ) e_{1, 1}.$

The element $\ps (e_{1, 1}) \in L (E)$
is an idempotent,
and is nonzero since $\ps \neq 0$ and $\MP{d}{p}$ is simple.
Fix $\et_0 \in E$
with $\| \et_0 \| = 1$ and $\ps (e_{1, 1}) \et_0 = \et_0.$
Define $v \colon \C^d \to E$
by
$v (\ld) = \ps ( s (\ld)) \et_0$
for $\ld \in \C^d.$

Let $\ld \in \C^d.$
Then
\[
\| v (\ld) \|
 \leq \| \ps \| \cdot \| s (\ld) \| \cdot \| \et_0 \|
 = \| \ps \| \cdot \| \ld \|_p.
\]
Also, it is well known
that there exists $\gm \in \C^d \setminus \{ 0 \}$
such that
%
$\om_{\gm} (\ld) = 1$ and $\| \gm \|_{q} = \| \ld \|_{p}^{-1}.$
(The method of proof can be found,
for example, at the beginning of Section~6.5 of~\cite{Ry}.)
Therefore
\begin{align*}
1
& = \| \et_0 \|
  = \big\| \ps \big( t (\gm) s (\ld) \big) \et_0 \big\|
  = \| \ps ( t (\gm) ) v (\ld) \|
     \\
& \leq \| \ps \| \cdot \| t (\gm) \| \cdot \| v (\ld) \|
  = \| \ps \| \cdot \| \gm \|_{q} \cdot \| v (\ld) \|
  = \| \ps \| \cdot \| \ld \|_{p}^{-1} \cdot \| v (\ld) \|.
\end{align*}
So $\| \ps \|^{-1} \cdot \| \ld \|_{p} \leq \| v (\ld) \|.$
This completes the proof of the claim.

We now return to our spatial $L^p$~UHF algebra~$A.$
It follows from Theorem~\ref{T_2Y15_SpEx}(\ref{T_2Y15_SpEx_SpIsLim})
and Definition~\ref{D:StdUHF}
that $A$ contains isometric copies of $\MP{d}{p}$
for arbitrarily large $d \in \Z,$
and hence for all $d \in \N.$
Applying the claim to the restriction of $\ps$ to these subalgebras,
for all $d \in \N$ we get $v_d \in L (l_d^p, E)$
such that for all $\xi \in l_d^p$ we have
\[
\| \ps \|^{-1} \cdot \| \xi \|
 \leq \| v_d \xi \|
 \leq \| \ps \| \cdot \| \xi \|.
\]
The statement of the theorem now follows from Lemma~\ref{L_2Y19lpFRep}.\end{proof}

We suppose that there is no converse to Theorem~\ref{T_2Y18LpInUHF},
but we do not know a counterexample.

\begin{thm}\label{T_UHFNonIsoP}
Let $p_1, p_2 \in [1, \I)$ be distinct.
Let $A_1$ be a spatial $L^{p_1}$~UHF algebra
and let $A_2$ be a spatial $L^{p_2}$~UHF algebra.
Then $A_1 \not\cong A_2.$
\end{thm}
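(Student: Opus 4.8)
The plan is to argue by contradiction, using Theorem~\ref{T_2Y18LpInUHF} in \emph{both} directions and exploiting the fact that the list of admissible pairs in Theorem~\ref{T_2Y19NFinR} is not symmetric. So suppose $\Phi \colon A_1 \to A_2$ is an isomorphism of Banach algebras; then $\Phi$ is bounded, and so is $\Phi^{-1}$ (Open Mapping Theorem).

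The first step is to replace $A_1$ and $A_2$ by copies acting on \emph{separable} $L^{p_j}$~spaces, so that Theorem~\ref{T_2Y19NFinR} will be applicable. Each $A_j$ has an associated supernatural number $N_j$. By Theorems~\ref{T:UHFIsoCrit} and~\ref{T_2Y15_SpEx} (specifically: every spatial $L^{p_j}$~UHF algebra of type $N_j$ is a direct limit of a spatial direct system by Theorem~\ref{T_2Y15_SpEx}(\ref{T_2Y15_SpEx_SpIsLim}), and such a direct limit is isometrically isomorphic to the concrete model $A(d,\rh)$ by the proof of Theorem~\ref{T_2Y15_SpEx}(\ref{T_2Y15_SpEx_LimIsSp}), via Theorem~\ref{T:UHFIsoCrit} with $D = \C$), $A_j$ is isometrically isomorphic to the concrete model built in the proof of Theorem~\ref{T_2Y15_SpEx}(\ref{T_2Y15_SpEx_Ex}), in which each $X_n$ is a finite set with normalized counting measure. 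That model is a closed subalgebra of $L(L^{p_j}(X_j, \mu_j))$ with $X_j = \prod_n X_n$ a countable product of finite probability spaces; the measure algebra of $X_j$ is then countably generated, so $L^{p_j}(X_j, \mu_j)$ is separable. Since replacing each $A_j$ by an isometrically isomorphic copy does not affect whether $A_1 \cong A_2$, we may assume $A_j$ is such a closed subalgebra, and we write $\io_j \colon A_j \to L(L^{p_j}(X_j, \mu_j))$ for the inclusion.

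Now consider $\ps_1 = \io_2 \circ \Phi \colon A_1 \to L(L^{p_2}(X_2, \mu_2))$. This is a nonzero (in fact injective) bounded \hm, so Theorem~\ref{T_2Y18LpInUHF} shows that $l^{p_1}(\N)$ is crudely finitely representable in the separable space $L^{p_2}(X_2, \mu_2)$ with constant $\| \ps_1 \|^2 + 1$. Theorem~\ref{T_2Y19NFinR} (with $p = p_1$ and $r = p_2$) then forces $1 \le p_2 < p_1 \le 2$ or $2 = p_1 < p_2$. Applying the same reasoning to $\ps_2 = \io_1 \circ \Phi^{-1} \colon A_2 \to L(L^{p_1}(X_1, \mu_1))$ forces $1 \le p_1 < p_2 \le 2$ or $2 = p_2 < p_1$. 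A routine check of the four combinations shows these two conclusions cannot both hold when $p_1 \neq p_2$: one cannot have both $p_2 < p_1$ and $p_1 < p_2$; $p_2 < p_1 \le 2$ excludes $p_2 = 2$; $p_1 = 2 < p_2$ excludes $p_1 < p_2 \le 2$; and $p_1 = 2 = p_2$ violates $p_1 \neq p_2$. This contradiction proves $A_1 \not\cong A_2$.

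The one point requiring care — and the main obstacle — is the set-up in the second paragraph: Theorem~\ref{T_2Y19NFinR} is stated only for separable $L^r$~spaces, so one really must pass to the concrete model of Theorem~\ref{T_2Y15_SpEx}(\ref{T_2Y15_SpEx_Ex}) rather than work with an arbitrary spatial representation, whose underlying $L^{p_j}$~space need not be separable. Everything else is bookkeeping, though it is worth stressing that a single application of Theorem~\ref{T_2Y18LpInUHF} rules out only one orientation of a hypothetical isomorphism, so both $\Phi$ and $\Phi^{-1}$ must be used.
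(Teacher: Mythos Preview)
Your proof is correct and uses the same two key ingredients as the paper (Theorem~\ref{T_2Y18LpInUHF} and Theorem~\ref{T_2Y19NFinR}), including the reduction to a separable $L^{p_j}$ space. The organizational difference is that the paper first makes a \emph{without loss of generality} choice---swapping $A_1$ and $A_2$ if necessary so that $p_1 \neq 2$ and, when $p_2 \neq 2$, $p_1 < p_2$---which rules out both alternatives in the conclusion of Theorem~\ref{T_2Y19NFinR} after a \emph{single} application of Theorem~\ref{T_2Y18LpInUHF}. This buys slightly more than your symmetric two-direction argument: the paper actually concludes that there is no nonzero continuous homomorphism from $A_1$ to $A_2$ (in the chosen orientation), not merely no isomorphism. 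Your approach, by contrast, is cleaner in that it avoids the case split in the WLOG step, at the cost of the extra application and the four-case check at the end.
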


We emphasize:
There isn't even a not necessarily isometric isomorphism.

\begin{proof}[Proof of Theorem~\ref{T_UHFNonIsoP}]
By exchanging $A_1$ and $A_2$ if necessary,
we can assume that $p_1 \neq 2$
and that, if $p_2 \neq 2,$
then $p_1 < p_2.$
Now Theorem~\ref{T_2Y19NFinR}
implies that
there is no \sfm{} $\XBM$
such that $L^{p_2} (X, \mu)$ is separable
and $l^{p_1} (\N)$ is
crudely finitely representable in $L^{p_2} (X, \mu)$
for any constant~$M.$
It follows from Definition~\ref{D_2Y15_ITP}
and the construction in Example~\ref{E_2Y14_LpUHF}
that there is a \sfm{} $\XBM$
such that $L^{p_2} (X, \mu)$ is separable
and $A_2$ is isometrically isomorphic to a closed subalgebra
of $L (L^{p_2} (X, \mu)).$
Therefore Theorem~\ref{T_2Y18LpInUHF}
implies that there is no nonzero \ct{} \hm{}
from $A_1$ to $A_2,$
hence certainly no isomorphism.
\end{proof}

For some values of $p_1$ and~$p_2,$
Theorem~\ref{T_2Y19NFinR} and Theorem~\ref{T_2Y18LpInUHF}
give a lot more.
For example,
if $A_1$ is a spatial $L^{p_1}$~UHF algebra
and $1 \leq p_1 < p_2 \leq 2,$
then there is no nonzero continuous homomorphism from $A_1$
to the bounded operators
on any separable space of the form $L^{p_2} (X, \mu),$
and hence no nonzero continuous homomorphism from $A_1$
to any $L^{p_2}$~UHF algebra, spatial or not.

In the cases in which homomorphisms are not ruled out,
we do not know whether they exist.
We also do not know what happens if the domain $A_1$
is not spatial.

\begin{pbm}\label{Pb_2Y21_NzCtHm}
Do there exist $p_1$ and $p_2$ with $1 \leq p_2 < p_1 \leq 2$
and spatial $L^{p_j}$~UHF algebras $A_j$ for $j = 1, 2$
such that there is a nonzero continuous homomorphism
from $A_1$ to~$A_2$?
What if we drop the requirement that $A_2$ be spatial?
\end{pbm}

\begin{qst}\label{Q_3121_Diffp}
Do there exist distinct $p_1, p_2 \in [1, \I),$
an $L^{p_1}$~UHF algebra~$A,$
a \sfm{} $\XBM,$
and a nonzero \ct{} \hm{} from $A$ to $L (L^{p_2} (X, \mu))$?
\end{qst}

A solution might need fairly subtle information about
the Banach space structure of $L^p$~spaces.

\section{Simplicity of $\OP{d}{p}$}\label{Sec:Simplicity}

\indent
The purpose of this section is to prove that
if $d \in \{ 2, 3, 4, \ldots \}$ and $p \in [1, \I),$
then $\OP{d}{p}$ is purely infinite, simple, and amenable.

Recall that
a simple unital \ca{} $A \not\cong \C$ is purely infinite
(that is, every nonzero hereditary subalgebra
contains an infinite projection)
\ifo{} for every nonzero $b \in A$ there are $x, y \in B$
such that $x b y = 1.$
We do not have a theory of hereditary subalgebras of Banach algebras,
so we take the second condition as our definition.

\begin{dfn}\label{D:BanachPI}
We say that a unital Banach algebra $B \not\cong \C$
is {\emph{purely infinite}}
if for every nonzero $b \in B$ there are $x, y \in B$
such that $x b y = 1.$
\end{dfn}

\begin{prp}\label{P:PIIsSimple}
Every purely infinite unital Banach algebra is simple.
\end{prp}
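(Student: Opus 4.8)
The plan is to show directly that $B$ has no proper nonzero two-sided ideal. Let $I \subseteq B$ be a two-sided ideal with $I \neq \{ 0 \},$ and choose $b \in I$ with $b \neq 0.$ By Definition~\ref{D:BanachPI} there exist $x, y \in B$ such that $x b y = 1.$ Since $I$ is a two-sided ideal, $x b y \in I,$ so $1 \in I$ and hence $I = B.$ Thus the only two-sided ideals of $B$ are $\{ 0 \}$ and $B.$

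It remains only to observe that $B \neq \{ 0 \}$ (the identity of a unital Banach algebra is nonzero), so that $\{ 0 \}$ and $B$ really are distinct; therefore $B$ is simple. I would also point out that the argument uses no continuity and no closedness of $I,$ so in fact $B$ is algebraically simple, matching the usage in Theorem~\ref{T:LpUHFSimple}.

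The argument is completely elementary, and there is no real obstacle. The only point worth a moment's thought is that one must feed a genuinely \emph{nonzero} element of the ideal into Definition~\ref{D:BanachPI}; pure infiniteness is precisely the assertion that every nonzero element of $B$ generates $B$ as a two-sided ideal, which is exactly what rules out proper nonzero ideals.
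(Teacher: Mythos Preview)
Your argument is correct and is exactly the immediate verification the paper has in mind; the paper's own proof simply reads ``The proof is immediate.'' Your additional remarks about algebraic simplicity and the nonzero identity are accurate but not needed for the proposition as stated.
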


\begin{proof}
The proof is immediate.
\end{proof}

\begin{rmk}\label{R_3218_EqPI}
In Definitions~1.2 of~\cite{AGP},
a ring is defined to be purely infinite
if every nonzero right ideal
contains an infinite idempotent.
(When applied to Banach algebras,
the ideals are not required to be closed.)
By Theorem~1.6 of~\cite{AGP},
and using the fact that $\C$ is the only unital Banach algebra
which is a division ring,
a simple Banach algebra is purely infinite in our sense
\ifo{} it is purely infinite as a unital ring in the sense
of~\cite{AGP}.
\end{rmk}

We recall the following from Section~1 of~\cite{Ph5}
(essentially following Section~1 of~\cite{Cu1}).

\begin{ntn}\label{N:Words}
Let $d \in \{ 2, 3, 4, \ldots \}$
and let $n \in \Nz.$
We define $W_n^d = \{1, 2, \ldots, d \}^n.$
Thus, $W_n^d$ is
the set of all sequences
$\af = \big( \af (1), \af (2), \ldots, \af (n) \big)$
in which $\af (l) \in \{1, 2, \ldots, d \}$
for $l = 1, 2, \ldots, n.$
We set
\[
W_{\I}^d = \coprod_{n = 0}^{\I} W_n^d.
\]
We call the elements of $W_{\I}^d$ {\emph{words}}.
If $\af \in W_{\I}^d,$ the {\emph{length}} of~$\af,$
written $l (\af),$
is the unique number $n \in \Nz$ such that $\af \in W_n^d.$
Note that there is a unique word of length zero,
namely the empty word,
which we write as~$\E.$
For $\af \in W_m^d$ and $\bt \in W_n^d,$
we denote by $\af \bt$ the concatenation,
a word in $W_{m + n}^d.$
\end{ntn}

\begin{ntn}\label{N:WordsInGens}
Let $d \in \{ 2, 3, 4, \ldots \},$
let $n \in \Nz,$ and
let $\af = \big( \af (1), \af (2), \ldots, \af (n) \big) \in W_n^d.$
If $n \geq 1,$ we define $s_{\af}, t_{\af} \in L_d$ by
\[
s_{\af} = s_{\af (1)} s_{\af (2)}
     \cdots s_{\af (n - 1)} s_{\af (n)}
\andeqn
t_{\af} = t_{\af (n)} t_{\af (n - 1)}
     \cdots t_{\af (2)} t_{\af (1)}.
\]
We take $s_{\E} = t_{\E} = 1.$
\end{ntn}

For emphasis:
in the definition of $t_{\af},$
we take the generators $t_{\af (l)}$
{\emph{in reverse order}}.

\begin{lem}[Part of Lemma~1.10 of~\cite{Ph5}]\label{L:PropOfWords}
Let the notation be as in Notation~\ref{N:Words}
and Notation~\ref{N:WordsInGens}.
\begin{enumerate}
\item\label{L:PropOfWords-4}
Let $\af, \bt \in W_{\I}^d.$
Then $s_{\af \bt} = s_{\af} s_{\bt}$
and $t_{\af \bt} = t_{\bt} t_{\af}.$
\item\label{L:PropOfWords-X0}
Let $\af, \bt \in W_{\I}^d$ satisfy $l (\af) = l (\bt).$
Then $t_{\bt} s_{\af} = 1$ if $\af = \bt,$
and $t_{\bt} s_{\af} = 0$ otherwise.
\end{enumerate}
\end{lem}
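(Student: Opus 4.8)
The plan is to deduce both parts directly from the definitions in Notation~\ref{N:WordsInGens} together with the Leavitt relations in \Def{D:Leavitt}, proving part~(\ref{L:PropOfWords-X0}) by induction on the common length.

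First I would dispose of part~(\ref{L:PropOfWords-4}). If either $\af$ or $\bt$ is the empty word, the identities are immediate from $s_{\E} = t_{\E} = 1.$ Otherwise write $\af = \big( \af (1), \ldots, \af (m) \big)$ and $\bt = \big( \bt (1), \ldots, \bt (n) \big)$ with $m, n \geq 1;$ then $\af \bt$ is the sequence of length $m + n$ whose first $m$ entries are those of $\af$ and whose last $n$ entries are those of $\bt.$ Unwinding the definitions gives
\[
s_{\af \bt} = s_{\af (1)} \cdots s_{\af (m)} \, s_{\bt (1)} \cdots s_{\bt (n)} = s_{\af} s_{\bt},
\]
and, remembering that the $t$-generators are taken in reverse order,
\[
t_{\af \bt} = t_{\bt (n)} \cdots t_{\bt (1)} \, t_{\af (m)} \cdots t_{\af (1)} = t_{\bt} t_{\af}.
\]

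For part~(\ref{L:PropOfWords-X0}) I would induct on $n = l (\af) = l (\bt).$ When $n = 0,$ both $\af$ and $\bt$ equal the unique empty word, so $\af = \bt$ and $t_{\bt} s_{\af} = t_{\E} s_{\E} = 1,$ as claimed. For the inductive step, write $\af = \gm \gm'$ and $\bt = \dt \dt',$ where $\gm = \big( \af (1) \big)$ and $\dt = \big( \bt (1) \big)$ have length~$1$ and $\gm', \dt'$ have length $n - 1.$ By part~(\ref{L:PropOfWords-4}) we have $s_{\af} = s_{\af (1)} s_{\gm'}$ and $t_{\bt} = t_{\dt'} t_{\bt (1)},$ hence
\[
t_{\bt} s_{\af} = t_{\dt'} \big( t_{\bt (1)} s_{\af (1)} \big) s_{\gm'}.
\]
If $\af (1) \neq \bt (1),$ then $t_{\bt (1)} s_{\af (1)} = 0$ by relation~(\ref{D:Leavitt_2}) of \Def{D:Leavitt}, so $t_{\bt} s_{\af} = 0,$ consistent with $\af \neq \bt.$ If $\af (1) = \bt (1),$ then $t_{\bt (1)} s_{\af (1)} = 1$ by relation~(\ref{D:Leavitt_1}), so $t_{\bt} s_{\af} = t_{\dt'} s_{\gm'};$ since $l (\gm') = l (\dt') = n - 1,$ the inductive hypothesis shows this is $1$ if $\gm' = \dt'$ and $0$ otherwise. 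Combining with $\af (1) = \bt (1),$ this says exactly that $t_{\bt} s_{\af} = 1$ when $\af = \bt$ and $t_{\bt} s_{\af} = 0$ when $\af \neq \bt.$

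I do not expect a genuine obstacle: the statement is essentially a formal consequence of the relations. The only points deserving care are the bookkeeping for the empty word and the reversed order of the $t$-generators — it is precisely this reversal that makes $t_{\bt}$ split off its \emph{last} factor as $t_{\bt (1)}$ on the \emph{right}, lining it up next to $s_{\af (1)}$ so that relation~(\ref{D:Leavitt_1}) or~(\ref{D:Leavitt_2}) can be applied in the middle of the word.
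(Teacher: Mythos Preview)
Your proof is correct. Note that the paper does not give its own proof of this lemma: it is stated as ``Part of Lemma~1.10 of~\cite{Ph5}'' and simply quoted from that reference, so there is nothing in the paper to compare your argument against beyond the statement itself.
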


\begin{lem}[Lemma~1.11 of~\cite{Ph5}]\label{L:mSum}
Let $d \in \{ 2, 3, 4, \ldots \},$
let $L_d$ be as in Definition~\ref{D:Leavitt}, and let $m \in \Nz.$
Then the collection $( s_{\af} t_{\bt} )_{\af, \bt \in W_{m}^d}$
is a system of matrix units for a unital subalgebra of $L_d$
isomorphic to $M_{d^m}.$
That is,
identifying $M_{d^m}$ with the linear maps on a vector space
with basis $W_{m}^d,$
with matrix units $e_{\af, \bt}$ for $\af, \bt \in W_{m}^d,$
there is a unique \hm{}
$\ph_m \colon M_{d^m} \to L_d$
such that $\ph_m (e_{\af, \bt}) = s_{\af} t_{\bt}.$
\end{lem}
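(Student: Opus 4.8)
The plan is to verify directly that the family $( s_{\af} t_{\bt} )_{\af, \bt \in W_m^d}$ satisfies the two defining relations for a system of $d^m \times d^m$ matrix units --- namely $(s_{\af} t_{\bt})(s_{\gm} t_{\dt}) = s_{\af} t_{\dt}$ when $\bt = \gm$ and $(s_{\af} t_{\bt})(s_{\gm} t_{\dt}) = 0$ when $\bt \neq \gm$, for $\af, \bt, \gm, \dt \in W_m^d$, together with $\sum_{\af \in W_m^d} s_{\af} t_{\af} = 1$ --- and then to package this into the \hm{} $\ph_m$ and use simplicity of $M_{d^m}$ to finish.

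First I would establish the multiplication relation. For $\af, \bt, \gm, \dt \in W_m^d$ one has $(s_{\af} t_{\bt})(s_{\gm} t_{\dt}) = s_{\af}(t_{\bt} s_{\gm}) t_{\dt}$, and since $l(\bt) = l(\gm) = m$, Lemma~\ref{L:PropOfWords}(\ref{L:PropOfWords-X0}) shows $t_{\bt} s_{\gm} = 1$ if $\bt = \gm$ and $t_{\bt} s_{\gm} = 0$ otherwise, which gives the claimed formula. Then I would prove the completeness relation $\sum_{\af \in W_m^d} s_{\af} t_{\af} = 1$ by induction on~$m$. The case $m = 0$ is just $s_{\E} t_{\E} = 1$. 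For the step from $m$ to $m + 1$, write each word in $W_{m+1}^d$ uniquely as a concatenation $\bt (j)$ with $\bt \in W_m^d$ and $j \in \{ 1, 2, \ldots, d \}$; by Lemma~\ref{L:PropOfWords}(\ref{L:PropOfWords-4}) we have $s_{\bt (j)} t_{\bt (j)} = s_{\bt} s_j t_j t_{\bt}$, whence
\[
\sum_{\af \in W_{m+1}^d} s_{\af} t_{\af}
 = \sum_{\bt \in W_m^d} s_{\bt} \left( \sum_{j = 1}^d s_j t_j \right) t_{\bt}
 = \sum_{\bt \in W_m^d} s_{\bt} t_{\bt}
 = 1,
\]
using relation~(\ref{D:Leavitt_3}) of Definition~\ref{D:Leavitt} at the second equality and the inductive hypothesis at the third.

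With both relations available, I would define $\ph_m \colon M_{d^m} \to L_d$ to be the unique linear map with $\ph_m(e_{\af, \bt}) = s_{\af} t_{\bt}$ for all $\af, \bt \in W_m^d$; uniqueness of such a \hm{} is automatic, since the matrix units $e_{\af, \bt}$ form a vector space basis of $M_{d^m}$. The multiplication relation shows that $\ph_m$ respects products of the $e_{\af, \bt}$, hence is an algebra \hm, and the completeness relation gives $\ph_m(1) = 1$, so $\ph_m$ is unital and in particular nonzero (here one uses $1 \neq 0$ in $L_d$, which holds because $L_d$ admits a spatial representation on a nonzero $L^p$~space). Since $M_{d^m}$ is simple, the kernel of $\ph_m$ is a proper two-sided ideal and so $\ker \ph_m = 0$; thus $\ph_m$ is injective and its image $\spn\big( \{ s_{\af} t_{\bt} \colon \af, \bt \in W_m^d \} \big)$ is a unital subalgebra of $L_d$ isomorphic to $M_{d^m}$. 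There is no real obstacle in this argument; the only point that takes a moment's care is the inductive verification of $\sum_{\af \in W_m^d} s_{\af} t_{\af} = 1$, which is precisely where the Leavitt relation~(\ref{D:Leavitt_3}) enters.
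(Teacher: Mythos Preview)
Your proof is correct. The paper does not actually prove this lemma here; it is quoted as Lemma~1.11 of~\cite{Ph5} and used without a local proof. Your direct verification of the matrix unit relations via Lemma~\ref{L:PropOfWords}, together with the inductive check of $\sum_{\af \in W_m^d} s_{\af} t_{\af} = 1$ from the Leavitt relation~(\ref{D:Leavitt_3}), is the standard argument and is exactly what one would expect the proof in~\cite{Ph5} to be.
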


\begin{lem}\label{L:LpUHFCore}
Let $d \in \{ 2, 3, 4, \ldots \}$
and let $p \in [1, \I) \setminus \{ 2 \}.$
Let $\OP{d}{p}$ be as in \Def{D:LPCuntzAlg}.
For $m \in \Nz,$
identify $\MP{d^m}{p}$ with $L (l^p (W_{m}^d)).$
Let $(e_{\af, \bt})_{\af, \bt \in W_{m}^d}$ be the standard system
of matrix units in this algebra.
Then there is a unique isometric \hm{}
$\ph_m \colon \MP{d^m}{p} \to \OP{d}{p}$
such that $\ph_m (e_{\af, \bt}) = s_{\af} t_{\bt}.$
Moreover, the $\ph_m$ together define an isometric \hm{}
$\ph$ from a spatial $L^p$~UHF algebra $A$ of type~$d^{\I}$
(see Definitions \ref{D_2Y15_SNat}
and~\ref{D_2Y15_ITP})
to $\OP{d}{p},$
whose range is the closed linear span of
\[
\big\{ s_{\af} t_{\bt} \colon
    {\mbox{$\af, \bt \in W_{\I}^d$ and $l (\af) = l (\bt)$}} \big\}.
\]
\end{lem}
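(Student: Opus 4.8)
The strategy is to build the maps $\ph_m$ first on the Leavitt algebra $L_d$ using \Lem{L:mSum}, transport the norm through a fixed spatial representation, and then assemble the $\ph_m$ into a single map on a spatial $L^p$~UHF algebra, invoking the uniqueness results of Section~\ref{Sec_N_UHF} to identify the domain. Fix once and for all a \sfm{} $\XBM$ and a spatial \rpn{} $\pi \colon L_d \to \LLp,$ so that by \Def{D:LPCuntzAlg} the algebra $\OP{d}{p}$ is the closure of $\pi(L_d)$ and $\| a \|_{\OP{d}{p}} = \| \pi(a) \|$ for $a \in L_d.$

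\emph{Step 1: existence and the homomorphism property of $\ph_m.$} For each $m \in \Nz,$ \Lem{L:mSum} gives a unital \hm{} $\ph_m^{L} \colon M_{d^m} \to L_d$ with $\ph_m^L(e_{\af, \bt}) = s_{\af} t_{\bt}.$ Composing with the inclusion $L_d \hookrightarrow \OP{d}{p}$ gives an algebra \hm{} $\ph_m \colon \MP{d^m}{p} \to \OP{d}{p}$ with the stated formula on matrix units; uniqueness is clear since the $e_{\af,\bt}$ span $M_{d^m}.$ The only real content here is that $\ph_m$ is \emph{isometric}. For this, I would compose with $\pi$ to get a \rpn{} $\pi \circ \ph_m^L \colon M_{d^m} \to \LLp;$ it suffices to show this \rpn{} is spatial in the sense of \Def{D_2Y14_SpMd}, because then Theorem~\ref{T_2Y17Iso} gives isometry. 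This is exactly the assertion that the restriction of a spatial \rpn{} of $L_d$ to the matrix subalgebra $\spn\{s_\af t_\bt : \af,\bt \in W_m^d\}$ is again spatial. The case $m=1$ is built into \Def{D_2Y14_SpRep}(\ref{D_2Y14_SpRep_MnSp}), and the general case is the combination of Theorem 7.7(5) and Theorem 7.2(4) of~\cite{Ph5} (the same facts already cited just after \Def{D_2Y14_SpRep}); concretely, \Lem{L:PropOfWords} shows the $s_\af t_\bt$ form a system of matrix units, and the spatial condition on $\pi$ forces each such system to be represented spatially. For $p = 2$ this is automatic from \Lem{L_3717_CSt} (a contractive \hm{} of \ca s is a *-\hm, hence isometric on the simple algebra $M_{d^m}$).

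\emph{Step 2: compatibility of the $\ph_m.$} Using \Lem{L:PropOfWords}(\ref{L:PropOfWords-4}) and the relation $\sum_{j=1}^d s_j t_j = 1$ from \Def{D:Leavitt}, one checks $s_\af t_\bt = \sum_{j=1}^d s_{\af j} t_{\bt j},$ which says precisely that $\ph_{m+1}$ restricted to the image of $\ph_m$ agrees with $\ph_m$ under the standard unital inclusion $\MP{d^m}{p} \hookrightarrow \MP{d^{m+1}}{p}$ (filling in an identity tensor factor, as in Corollary~\ref{C_2Y21MpTens}). Thus the $\ph_m$ form a coherent family of isometric unital \hm s out of a spatial direct system of type~$d^{\I}$ in the sense of \Def{D:StdUHF}, with connecting maps the standard inclusions $\MP{d^m}{p} \to \MP{d^{m+1}}{p}.$ By Remark~\ref{R-DlimUP} (the universal property of the direct limit, applied with contractive---in fact isometric---maps), the $\ph_m$ combine to a single isometric \hm{} $\ph$ from the direct limit $A$ of this system into $\OP{d}{p};$ isometry of $\ph$ on the dense algebraic direct limit follows since each $\ph_m$ is isometric and the norms are compatible, and then extends by continuity. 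By Theorem~\ref{T_2Y15_SpEx}(\ref{T_2Y15_SpEx_LimIsSp}), $A$ is a spatial $L^p$~UHF algebra of type~$d^{\I}.$

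\emph{Step 3: identifying the range.} The range of $\ph$ is the closure of $\bigcup_m \ph_m(\MP{d^m}{p}),$ which is the closure of $\spn\{ s_\af t_\bt : \af,\bt \in W_m^d,\ m \in \Nz\} = \spn\{ s_\af t_\bt : \af, \bt \in W_\I^d,\ l(\af) = l(\bt)\}.$ This gives the asserted description of $\ran(\ph).$

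\emph{Main obstacle.} The essential point---and the only step requiring the machinery of~\cite{Ph5} rather than bookkeeping---is Step~1: showing that the matrix subalgebra $\spn\{s_\af t_\bt : l(\af)=l(\bt)=m\}$ of a spatially represented $L_d$ is itself spatially represented, so that Theorem~\ref{T_2Y17Iso} applies and $\ph_m$ is isometric. Everything else (the homomorphism formula, the compatibility identity $s_\af t_\bt = \sum_j s_{\af j}t_{\bt j},$ assembling via the direct-limit universal property, and reading off the range) is routine given the algebraic identities in \Lem{L:PropOfWords} and the results already established in Section~\ref{Sec_N_UHF}.
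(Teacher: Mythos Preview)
Your approach is essentially the same as the paper's: build $\ph_m$ via \Lem{L:mSum}, show it becomes an isometric representation by proving that its composition with a spatial representation $\pi$ of $L_d$ is spatial on $M_{d^m},$ and then pass to the direct limit to get a spatial $L^p$~UHF algebra of type~$d^{\I}.$ Your Step~2 compatibility check $s_{\af} t_{\bt} = \sum_{j} s_{\af j} t_{\bt j}$ is correct and the paper leaves it implicit.

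The one place where your argument is less precise than the paper's is the ``main obstacle'' you yourself flag. You claim that spatiality of $\pi \circ \ph_m^L$ for general $m$ is ``the combination of Theorem~7.7(5) and Theorem~7.2(4) of~\cite{Ph5},'' but those are exactly the conditions cited for the case $m = 1$ in \Def{D_2Y14_SpRep}; they do not directly treat the higher matrix subalgebras, and your ``concretely'' sentence (``the spatial condition on $\pi$ forces each such system to be represented spatially'') is really just a restatement of what is to be proved. The paper closes this gap by going back to the \emph{original} definition of a spatial representation of $M_{d^m}$ (Definition~7.1 of~\cite{Ph5}): a spatial representation of $L_d$ sends each $s_j$ and $t_j$ to a spatial partial isometry (Definition~7.4(2) of~\cite{Ph5}), Lemma~6.17 of~\cite{Ph5} shows that products of spatial partial isometries are spatial partial isometries, hence each $\pi(s_{\af} t_{\bt})$ is a spatial partial isometry, and that is precisely what Definition~7.1 of~\cite{Ph5} requires of $\ph_m.$ Then (1)$\Rightarrow$(3) in Theorem~7.2 of~\cite{Ph5} gives isometry. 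If you insert this one-line argument (spatial partial isometries are closed under products, so the matrix units at every level are spatial), your proof is complete and matches the paper's.
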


\begin{proof}
%
For $m \in \N,$
let $\ph_m$ be as in Lemma~\ref{L:mSum}.
By definition,
there is a \sfm{} $\XBM$
and a spatial \rpn{} $\rh \colon L_d \to \LLp$
such that we can identify $\OP{d}{p}$ isometrically with
${\overline{\rh (L_d)}}.$
In particular (see Definition 7.4(2) of~\cite{Ph5}),
$\rh (s_j)$ and $\rh (t_j)$ are spatial partial isometries
for $j = 1, 2, \ldots, d.$
Then $\ph_m$ becomes a \rpn{} of $\MP{d^m}{p}$
on $\LLp.$
For $\af, \bt \in W_m^d,$
the standard matrix unit $e_{\af, \bt}$
has image $\ph_m (e_{\af, \bt}) = \rh (s_{\af} t_{\bt}),$
so Lemma 6.17 of~\cite{Ph5}
implies that $\ph_m (e_{\af, \bt})$ is a spatial partial isometry.
This is what it means for $\ph_m$ to be spatial
(Definition~7.1 of~\cite{Ph5}).
It follows from the implication from (1) to~(3)
in Theorem~7.2 of~\cite{Ph5}
that $\ph_m$ is isometric.

Since
\[
\ph_1 (\MP{d}{p}) \subset \ph_2 (\MP{d^2}{p}) \subset \cdots,
\]
it follows that
$A = {\overline{\bigcup_{m = 1}^{\I} \ph_m (\MP{d^m}{p})}}$
is the direct limit of a spatial direct system of type~$d^{\I}.$
So $A$ is a spatial $L^p$~UHF algebra of type~$d^{\I}$
by Theorem~\ref{T_2Y15_SpEx}(\ref{T_2Y15_SpEx_LimIsSp}).
\end{proof}

\begin{prp}\label{P:GaugeAction}
Let $d \in \{ 2, 3, 4, \ldots \}$
and let $p \in [1, \I) \setminus \{ 2 \}.$
Let $\OP{d}{p}$ be as in \Def{D:LPCuntzAlg}.
Then there exists a unique isometric action~$\sm$
of the group $S^1 = \{ \ld \in \C \colon | \ld | = 1 \}$
on $\OP{d}{p}$
such that,
for $\ld \in S^1$ and $j = 1, 2, \ldots, d,$
we have $\sm_{\ld} (s_j) = \ld s_j$
and $\sm_{\ld} (t_j) = \ld^{-1} t_j.$
\end{prp}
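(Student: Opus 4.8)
The plan is to construct the action first on the dense subalgebra $L_d$ via its universal property, show that each resulting automorphism is isometric for the spatial norm, and then extend everything to $\OP{d}{p}$ by continuity.

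First I would observe that for each $\ld \in S^1$ the elements $\ld s_1, \ldots, \ld s_d, \ld^{-1} t_1, \ldots, \ld^{-1} t_d$ of $L_d$ satisfy the relations of Definition~\ref{D:Leavitt}: $(\ld^{-1} t_j)(\ld s_j) = t_j s_j = 1$, $(\ld^{-1} t_j)(\ld s_k) = t_j s_k = 0$ for $j \neq k$, and $\sum_{j=1}^d (\ld s_j)(\ld^{-1} t_j) = \sum_{j=1}^d s_j t_j = 1$. Hence there is a unique unital algebra \hm{} $\sm_\ld^0 \colon L_d \to L_d$ with $\sm_\ld^0(s_j) = \ld s_j$ and $\sm_\ld^0(t_j) = \ld^{-1} t_j$; since $\sm_{\ld^{-1}}^0$ is an inverse for it, $\sm_\ld^0 \in \Aut(L_d)$, and checking on generators gives $\sm_\ld^0 \circ \sm_\mu^0 = \sm_{\ld \mu}^0$ and $\sm_1^0 = \id_{L_d}$.

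The main point is that $\sm_\ld^0$ is isometric for the norm $a \mapsto \| \rh(a) \|$, where $\rh \colon L_d \to \LLp$ is a spatial \rpn{} defining $\OP{d}{p}$. I would prove this by checking that $\rh \circ \sm_\ld^0$ is again a spatial \rpn{} in the sense of Definition~\ref{D_2Y14_SpRep}. Condition~(\ref{D_2Y14_SpRep_Iso}) holds since $\| \rh(\sm_\ld^0(s_j)) \| = |\ld| \cdot \| \rh(s_j) \| \leq 1$ and likewise for $t_j$. For condition~(\ref{D_2Y14_SpRep_MnSp}), note that $\sm_\ld^0(s_j t_k) = (\ld s_j)(\ld^{-1} t_k) = s_j t_k$, so $\sm_\ld^0$ is the identity on $B = \spn(\{ s_j t_k \})$, whence $(\rh \circ \sm_\ld^0)|_B = \rh|_B$ is contractive because $\rh$ is spatial. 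Since the norm $a \mapsto \| \rh(a) \|$ is independent of the choice of spatial \rpn{} (Theorem~8.7 of~\cite{Ph5}), it follows that $\| \rh(\sm_\ld^0(a)) \| = \| \rh(a) \|$ for all $a \in L_d$. Therefore $\sm_\ld^0$ extends uniquely to an isometric linear bijection $\sm_\ld$ of $\OP{d}{p} = \overline{\rh(L_d)}$, which is again multiplicative by continuity and hence an isometric \am; the relations $\sm_\ld \sm_\mu = \sm_{\ld\mu}$ and $\sm_1 = \id$ persist by density (so in particular $\sm_{\ld}^{-1} = \sm_{\ld^{-1}}$).

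For strong continuity I would use the usual $\tfrac{\ep}{3}$ argument: for $b \in L_d$, written as a linear combination of products of generators, $\sm_\ld(b)$ is a finite sum of integer powers of $\ld$ times fixed elements of $\OP{d}{p}$, so $\ld \mapsto \sm_\ld(b)$ is continuous; for general $a \in \OP{d}{p}$ one approximates $a$ in norm by elements of $L_d$ and uses that each $\sm_\ld$ is isometric. Uniqueness is then immediate: any isometric action taking the prescribed values on $s_j$ and $t_j$ agrees with $\sm$ on the subalgebra they generate, namely $L_d$, which is dense, hence agrees everywhere. I do not expect a genuine obstacle; the single ingredient that uses the theory of~\cite{Ph5} rather than a direct computation is the independence of the spatial norm from the chosen spatial representation, which is precisely what turns ``$\rh \circ \sm_\ld^0$ is spatial'' into ``$\sm_\ld^0$ is isometric.''
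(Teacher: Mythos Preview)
Your proof is correct and takes a genuinely different route from the paper's. The paper chooses a \emph{free} spatial representation (Corollary~8.4 of~\cite{Ph5}), which provides a $\Z$-grading $X = \coprod_{k \in \Z} E_k$ of the underlying measure space, and then implements $\sm_\ld$ spatially by conjugating with the multiplication operator $m(g_\ld)$ for $g_\ld|_{E_k} = \ld^k$; isometry of $\sm_\ld$ is then immediate from isometry of $m(g_\ld)$. Your approach instead stays at the level of $L_d$: you build $\sm_\ld^0$ from the universal property, verify directly that $\rh \circ \sm_\ld^0$ again satisfies the two conditions of Definition~\ref{D_2Y14_SpRep}, and then invoke the norm-independence of spatial representations (Theorem~8.7 of~\cite{Ph5}) to conclude isometry. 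Your argument is slightly more economical---it avoids the auxiliary notion of a free representation and the explicit spatial construction---while the paper's argument has the advantage of exhibiting the gauge action concretely as an inner action on $\LLp$, which can be useful elsewhere. The continuity and uniqueness arguments are essentially the same in both.
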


\begin{proof}
By Corollary~8.4 of~\cite{Ph5},
there exist a \sfm{} $\XBM$
and a spatial \rpn{} $\rh \colon L_d \to \LLp$
which is free in the sense of Definition 8.1(1) of~\cite{Ph5}.
By Theorem~8.7 of~\cite{Ph5},
we can identify $\OP{d}{p}$ isometrically with
${\overline{\rh (L_d)}}.$
Write $X = \coprod_{k \in \Z} E_k$
as in Definition 8.1(1) of~\cite{Ph5},
with
\[
\rh (s_j) ( L^p (E_k, \mu)) \subset L^p (E_{k + 1}, \mu)
\andeqn
\rh (t_j) ( L^p (E_k, \mu)) \subset L^p (E_{k - 1}, \mu)
\]
for all $k \in \Z$ and for $j = 1, 2, \ldots, d.$

For $\ld \in S^1$ define $g_{\ld} \in L^{\I} (X, \mu)$
by $g_{\ld} (x) = \ld^{k}$ for $k \in \Z$ and $x \in E_k.$
For $f \in L^{\I} (X, \mu),$
let $m (f) \in \LLp$ be the multiplication
operator, defined by
\[
\big( m (f) \xi \big) (x) = f (x) \xi (x)
\]
for
$\xi \in L^p (X, \mu)$
and $x \in X.$
One checks directly that
for $\ld \in S^1$ and for $j = 1, 2, \ldots, d$ we have
\[
m (g_{\ld}) \rh (s_j) m (g_{\ld})^{-1} = \ld \rh (s_j)
\andeqn
m (g_{\ld}) \rh (t_j) m (g_{\ld})^{-1} = \ld^{-1} \rh (t_j).
\]
It follows that there exists a unique automorphism
$\sm_{\ld} \in \Aut \big( \OP{d}{p} \big)$
such that $\sm_{\ld} (s_j) = \ld s_j$
and $\sm_{\ld} (t_j) = \ld^{-1} t_j$ for $j = 1, 2, \ldots, d.$
Since $m (g_{\ld})$ and its inverse are both isometric,
so is $\sm_{\ld}.$
Clearly $\ld \mapsto \sm_{\ld}$ is a group \hm.

It remains only to prove that $\ld \mapsto \sm_{\ld} (a)$
is \ct{} for $a \in \OP{d}{p}.$
This is clearly true for $a \in \rh (L_d),$
and an $\frac{\ep}{3}$~argument extends this result to
all $a \in \OP{d}{p}.$
\end{proof}

\begin{lem}\label{L:FixedPtGauge}
Let $d \in \{ 2, 3, 4, \ldots \}$
and let $p \in [1, \I) \setminus \{ 2 \}.$
Let $\OP{d}{p}$ be as in \Def{D:LPCuntzAlg}.
Let $\sm \colon S^1 \to \Aut \big( \OP{d}{p} \big)$
be the action of Proposition~\ref{P:GaugeAction}.
Identify ${\widehat{S^1}}$ with $\Z$ in the usual way.
For $n \in \Z,$ let $B_n$ be the eigenspace
\[
B_{n} =
 \big\{ b \in \OP{d}{p} \colon
       {\mbox{$\sm_{\ld} (b) = \ld^n b$ for all
                        $\ld \in S^1$}} \big\}
\]
of Proposition~\ref{P:GpEigensp},
and let $P_n$ be as there.
Then:
\begin{enumerate}
\item\label{L:FixedPtGauge-1}
For $\af, \bt \in W_{\I}^d,$
\[
P_n ( s_{\af} t_{\bt} )
  = \begin{cases}
   0               & l (\af) - l (\bt) \neq n
       \\
   s_{\af} t_{\bt} & l (\af) - l (\bt) = n.
\end{cases}
\]
\item\label{L:FixedPtGauge-2}
$B_n$ is the closed linear span of all $s_{\af} t_{\bt}$
with $\af, \bt \in W_{\I}^d$ such that $l (\af) - l (\bt) = n.$
\item\label{L:FixedPtGauge-3}
$B_0$ is the image of the \hm{} $\ph \colon A \to \OP{d}{p}$
of \Lem{L:LpUHFCore}.
\end{enumerate}
\end{lem}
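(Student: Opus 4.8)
The plan is to obtain all three parts from Proposition~\ref{P:GpEigensp} (the structure of the eigenspaces of a compact abelian group action), applied to the gauge action $\sm$ of Proposition~\ref{P:GaugeAction} and the identification ${\widehat{S^1}} = \Z$, together with Lemma~\ref{L:LpUHFCore}.

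For part~(\ref{L:FixedPtGauge-1}): since each $\sm_{\ld}$ is an automorphism with $\sm_{\ld} (s_j) = \ld s_j$ and $\sm_{\ld} (t_j) = \ld^{-1} t_j$, Notation~\ref{N:WordsInGens} gives $\sm_{\ld} (s_{\af}) = \ld^{l (\af)} s_{\af}$ and $\sm_{\ld} (t_{\bt}) = \ld^{- l (\bt)} t_{\bt}$, whence
\[
\sm_{\ld} (s_{\af} t_{\bt}) = \ld^{\, l (\af) - l (\bt)} \, s_{\af} t_{\bt}
\qquad {\mbox{for all $\ld \in S^1$.}}
\]
In particular $s_{\af} t_{\bt} \in B_{l (\af) - l (\bt)}$. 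Since the character of ${\widehat{S^1}} = \Z$ corresponding to $n$ is $\ld \mapsto \ld^n$, I would then insert the displayed identity into the definition of $P_n$ from Proposition~\ref{P:GpEigensp} to get $P_n ( s_{\af} t_{\bt}) = \big( \int_{S^1} \ld^{\, l (\af) - l (\bt) - n} \, d \nu (\ld) \big) s_{\af} t_{\bt}$, and conclude from orthogonality of characters (the integral is $1$ when $l (\af) - l (\bt) = n$ and $0$ otherwise). Alternatively, once $s_{\af} t_{\bt} \in B_{l (\af) - l (\bt)}$ is known, this is immediate from parts~(\ref{P:GpEigensp-3}), (\ref{P:GpEigensp-4a}), and~(\ref{P:GpEigensp-4b}) of Proposition~\ref{P:GpEigensp}.

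For part~(\ref{L:FixedPtGauge-2}): one inclusion follows from the displayed identity and the fact that $B_n$ is closed (Proposition~\ref{P:GpEigensp}(\ref{P:GpEigensp-5})), so the closed linear span of the $s_{\af} t_{\bt}$ with $l (\af) - l (\bt) = n$ is contained in $B_n$. For the reverse inclusion, take $b \in B_n$, so $b = P_n (b)$ by Proposition~\ref{P:GpEigensp}(\ref{P:GpEigensp-3})--(\ref{P:GpEigensp-4a}). I would use that the linear span of $\{ s_{\gm} t_{\dt} \colon \gm, \dt \in W_{\I}^d \}$ is dense in $\OP{d}{p}$ — this holds because $L_d$ is the linear span of these monomials, any word in the generators reducing, via relations~(\ref{D:Leavitt_1}) and~(\ref{D:Leavitt_2}) of Definition~\ref{D:Leavitt}, either to $0$ or to a single such monomial — to choose $b_k$ in this span with $b_k \to b$. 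Since $\sm$ is isometric, $\| P_n \| \leq 1$ by Proposition~\ref{P:GpEigensp}(\ref{P:GpEigensp-1})--(\ref{P:GpEigensp-2}), so $P_n (b_k) \to P_n (b) = b$; and by part~(\ref{L:FixedPtGauge-1}) each $P_n (b_k)$ lies in the linear span of $\{ s_{\gm} t_{\dt} \colon l (\gm) - l (\dt) = n \}$, so $b$ lies in its closed linear span.

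For part~(\ref{L:FixedPtGauge-3}): this will be immediate by taking $n = 0$ in part~(\ref{L:FixedPtGauge-2}) and comparing with the description of the range of $\ph$ in Lemma~\ref{L:LpUHFCore}, which is exactly the closed linear span of the $s_{\af} t_{\bt}$ with $l (\af) = l (\bt)$. I do not expect a genuine obstacle here; the only points needing a little care are lining up the normalization of the gauge action with the character computation in part~(\ref{L:FixedPtGauge-1}), and recording the (standard) density of the span of the monomials $s_{\gm} t_{\dt}$ in $\OP{d}{p}$.
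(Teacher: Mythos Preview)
Your proposal is correct and follows essentially the same approach as the paper: both establish $\sm_{\ld}(s_{\af} t_{\bt}) = \ld^{l(\af)-l(\bt)} s_{\af} t_{\bt}$ to get part~(\ref{L:FixedPtGauge-1}), use continuity of $P_n$ together with density of $L_d$ to push part~(\ref{L:FixedPtGauge-1}) to all of $\OP{d}{p}$ for part~(\ref{L:FixedPtGauge-2}), and read off part~(\ref{L:FixedPtGauge-3}) from Lemma~\ref{L:LpUHFCore}. Your write-up is slightly more explicit (computing the integral, spelling out the density of the monomials), but the structure is the same.
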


\begin{proof}
Part~(\ref{L:FixedPtGauge-1}) follows from
\begin{equation}\label{Eq:GaugeOnWords}
\sm_{\ld} ( s_{\af} t_{\bt} ) = \ld^{l (\af) - l (\bt)} s_{\af} t_{\bt}.\end{equation}

For~(\ref{L:FixedPtGauge-2}),
let $D_n$ be the closed linear span of all $s_{\af} t_{\bt}$
with $\af, \bt \in W_{\I}^d$ and $l (\af) - l (\bt) = n.$
Part~(\ref{L:FixedPtGauge-1}) implies that if $a \in L_d,$
then $P_n (a) \in D_n.$
By continuity, $P_n (a) \in D_n$ for all $a \in \OP{d}{p}.$
Proposition~\ref{P:GpEigensp}(\ref{P:GpEigensp-3})
now implies that $B_n \subset D_n.$
The reverse inclusion is immediate from~(\ref{Eq:GaugeOnWords}).

Lemma~\ref{L:LpUHFCore} shows that
Part~(\ref{L:FixedPtGauge-3})
is a special case of~(\ref{L:FixedPtGauge-2}).
\end{proof}

We will need the endomorphisms of~$\OP{d}{p}$
in the following proposition.
In the context of Cuntz-Krieger C*-algebras
(where the analogous maps need not be multiplicative),
they are defined before Lemma~2.4 of~\cite{CK}.

\begin{prp}\label{P_2Y21Shift}
Let $d \in \{ 2, 3, 4, \ldots \},$
let $p \in [1, \I) \setminus \{ 2 \},$
and let $r \in \N.$
Define
$\ps_r \colon \OP{d}{p} \to \OP{d}{p}$ by
\[
\ps_r (a) = \sum_{\gm \in W_r^d} s_{\gm} a t_{\gm}
\]
for $a \in \OP{d}{p}.$
Then:
\begin{enumerate}
\item\label{P_2Y21Shift-Endo}
$\ps_r$ is a unital endomorphism of~$\OP{d}{p}.$
\item\label{P_2Y21Shift-Iso}
$\ps_r$ is isometric.
\item\label{P_2Y21Shift-Comm}
For $a \in \OP{d}{p}$ and $\af, \bt \in W_r^d,$
the elements $\ps_r (a)$ and $s_{\af} t_{\bt}$ commute.
\item\label{P_2Y21Shift-Deg0}
With $B_n$ as in Lemma~\ref{L:FixedPtGauge},
we have $\ps_r (B_n) \subset B_n$ for all $n \in \Z.$
\end{enumerate}
\end{prp}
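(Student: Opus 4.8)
The plan is to read off parts~(\ref{P_2Y21Shift-Endo}), (\ref{P_2Y21Shift-Comm}), and~(\ref{P_2Y21Shift-Deg0}) from the word calculus of Lemma~\ref{L:PropOfWords}, and to prove the isometry statement~(\ref{P_2Y21Shift-Iso}) by exhibiting $\ps_r (a)$, on a space carrying a spatial \rpn{} of $L_d$, as a ``block diagonal'' operator relative to an $L^p$~direct sum decomposition indexed by $W_r^d$, each block a copy of~$a$. Write $s_{\gm}, t_{\gm}$ for words $\gm$ as in Notation~\ref{N:WordsInGens}. The inputs I need are: $t_{\gm} s_{\dt}$ equals $1$ if $\gm = \dt$ and $0$ otherwise, for $\gm, \dt \in W_r^d$ (Lemma~\ref{L:PropOfWords}(\ref{L:PropOfWords-X0}), using $l (\gm) = l (\dt) = r$), and $\sum_{\gm \in W_r^d} s_{\gm} t_{\gm} = 1$ (the image of the identity of $M_{d^r}$ under the embedding of Lemma~\ref{L:mSum}, or an easy induction from relation~(\ref{D:Leavitt_3})). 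Using the first, for $a, b \in \OP{d}{p}$ one computes
\[
\ps_r (a) \ps_r (b)
 = \sum_{\gm, \dt \in W_r^d} s_{\gm} a \, (t_{\gm} s_{\dt}) \, b \, t_{\dt}
 = \sum_{\gm \in W_r^d} s_{\gm} (a b) t_{\gm}
 = \ps_r (a b),
\]
while the second gives $\ps_r (1) = 1$; as $\ps_r$ is manifestly bounded and linear, this is~(\ref{P_2Y21Shift-Endo}). The same two cancellations give, for $\af, \bt \in W_r^d$ and $a \in \OP{d}{p}$, that $\ps_r (a) s_{\af} t_{\bt} = s_{\af} a t_{\bt} = s_{\af} t_{\bt} \ps_r (a)$, which is~(\ref{P_2Y21Shift-Comm}). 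For~(\ref{P_2Y21Shift-Deg0}), I would use that $\sm_{\ld} (s_{\gm}) = \ld^{l (\gm)} s_{\gm}$ and $\sm_{\ld} (t_{\gm}) = \ld^{- l (\gm)} t_{\gm}$ for every word $\gm$ (Proposition~\ref{P:GaugeAction}); since $l (\gm) = r$ on $W_r^d$, the scalars cancel in $\ps_r (\sm_{\ld} (a)) = \sum_{\gm} \sm_{\ld} (s_{\gm}) \sm_{\ld} (a) \sm_{\ld} (t_{\gm})$, so $\ps_r$ commutes with every $\sm_{\ld}$, and hence preserves each eigenspace $B_n$.

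For part~(\ref{P_2Y21Shift-Iso}), fix a \sfm{} $\XBM$ and a spatial \rpn{} $\rh \colon L_d \to \LLp$ with $\OP{d}{p}$ identified isometrically with $\overline{\rh (L_d)}$ (Definition~\ref{D:LPCuntzAlg}), and regard the $s_{\gm}, t_{\gm}$ as operators on $L^p (X, \mu)$; then $\| s_{\gm} \| \leq 1$ and $\| t_{\gm} \| \leq 1$ (Definition~\ref{D_2Y14_SpRep}(\ref{D_2Y14_SpRep_Iso})). The lower bound $\| \ps_r (a) \| \geq \| a \|$ is immediate from $t_{\dt} \ps_r (a) s_{\dt} = a$ for a fixed $\dt \in W_r^d$, together with $\| s_{\dt} \|, \| t_{\dt} \| \leq 1$. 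For the upper bound, set $e_{\gm} = s_{\gm} t_{\gm} = \ph_r (e_{\gm, \gm})$, where $\ph_r \colon \MP{d^r}{p} \to \OP{d}{p}$ is the isometric \hm{} of Lemma~\ref{L:LpUHFCore}; the $e_{\gm}$ are pairwise orthogonal idempotents summing to~$1$. Here is the one step that is not formal: since $\ph_r$ is a spatial representation of $M_{d^r}$, the structure theory of such representations (Theorem~7.2 of~\cite{Ph5}; compare the $L^p$~sum decomposition used in the proof of Lemma~\ref{L:CondExptMd}) provides a measurable partition $X = \coprod_{\gm \in W_r^d} X_{\gm}$ for which $e_{\gm}$ is multiplication by~$1_{X_{\gm}}$, so that $L^p (X, \mu)$ is the $L^p$~direct sum of the subspaces $E_{\gm} = L^p (X_{\gm}, \mu) = \mathrm{ran} (e_{\gm})$. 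From $e_{\gm} s_{\gm} = s_{\gm}$ we get $\mathrm{ran} (s_{\gm}) = E_{\gm}$, and from $t_{\gm} s_{\gm} = 1$ with $\| s_{\gm} \|, \| t_{\gm} \| \leq 1$ it follows that $s_{\gm} \colon L^p (X, \mu) \to E_{\gm}$ is a bijective isometry with inverse $t_{\gm} |_{E_{\gm}}$.

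To conclude, I would observe that for $\dt \in W_r^d$, $\xi \in E_{\dt}$, and $a \in \OP{d}{p}$,
\[
\ps_r (a) \xi
 = \sum_{\gm \in W_r^d} s_{\gm} a \, (t_{\gm} s_{\dt}) \, (t_{\dt} \xi)
 = s_{\dt} a t_{\dt} \xi \in E_{\dt},
\]
so $\ps_r (a)$ leaves each $E_{\dt}$ invariant and, conjugated by the bijective isometry $s_{\dt}$, acts on $E_{\dt}$ exactly as~$a$; hence $\| \ps_r (a) |_{E_{\dt}} \| = \| a \|$ for every $\dt \in W_r^d$. Summing along the $L^p$~decomposition then gives $\| \ps_r (a) \xi \|^p = \sum_{\dt \in W_r^d} \| \ps_r (a) (e_{\dt} \xi) \|^p \leq \| a \|^p \sum_{\dt \in W_r^d} \| e_{\dt} \xi \|^p = \| a \|^p \| \xi \|^p$, whence $\| \ps_r (a) \| \leq \| a \|$, completing~(\ref{P_2Y21Shift-Iso}). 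The main obstacle is precisely the $L^p$~sum decomposition of $L^p (X, \mu)$ along the idempotents $e_{\gm}$: this is the assertion that the image of the spatial representation $\ph_r$ of $M_{d^r}$ has the expected diagonal form, and is the only place where one must invoke the representation theory of matrix algebras on $L^p$~spaces from~\cite{Ph5} rather than the elementary relations among the $s_j$ and $t_j$.
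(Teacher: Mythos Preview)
Your proof is correct and follows essentially the same route as the paper for parts~(\ref{P_2Y21Shift-Endo}), (\ref{P_2Y21Shift-Comm}), and~(\ref{P_2Y21Shift-Deg0}). For part~(\ref{P_2Y21Shift-Iso}) the core mechanism---the $L^p$~direct sum decomposition coming from the diagonal idempotents of a spatial matrix representation (Theorem~7.2(5) of~\cite{Ph5})---is also the paper's, but you differ in two small and useful ways. First, the paper reduces to $r = 1$ via $\ps_r = \ps_1^r$ and works with the partition $X = \coprod_{j = 1}^d X_j$; you instead work directly with the length-$r$ partition indexed by $W_r^d$, which is no harder once one knows $\ph_r$ is spatial. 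Second, for the lower bound the paper obtains contractivity and then invokes Proposition~8.9 of~\cite{Ph5} to conclude isometry, whereas your observation $t_{\dt} \ps_r (a) s_{\dt} = a$ with $\| s_{\dt} \|, \| t_{\dt} \| \leq 1$ yields $\| \ps_r (a) \| \geq \| a \|$ by an elementary norm inequality; this makes the argument a bit more self-contained.
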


\begin{proof}
In~(\ref{P_2Y21Shift-Endo}),
the only things needing proof are $\ps_r (1) = 1$
and $\ps_r (a b) = \ps_r (a) \ps_r (b)$ for $a, b \in \OP{d}{p}.$
The relation $\ps_r (1) = 1$ follows from Lemma~\ref{L:mSum}.
For the second,
we use Lemma~\ref{L:PropOfWords}(\ref{L:PropOfWords-X0})
at the second step
to get
\[
\ps_r (a) \ps_r (b)
 = \sum_{\gm_1, \gm_2 \in W_r^d}
      s_{\gm_1} a t_{\gm_1} s_{\gm_2} b t_{\gm_2}
 = \sum_{\gm_1 \in W_r^d}  s_{\gm_1} a b t_{\gm_1}
 = \ps_r (a b).
\]

We now prove~(\ref{P_2Y21Shift-Iso}).
For $p = 2,$ the result follows from standard C*-algebra methods.
So assume $p \neq 2.$
Since $\ps_r$ is the $r$-fold composite $\ps_1^r,$
it suffices to consider the case $r = 1.$
We may assume that there is a \sfm{} $\XBM$
such that $\OP{d}{p}$ is the closure
of the range of a spatial \rpn{} of $L_d$ on $L^p (X, \mu).$
Using the conditions in Theorem 7.7(7) and Definition 7.4(1)
of~\cite{Ph5},
we see that $\| s_j \| \leq 1$ and $\| t_j \| \leq 1$
for $j = 1, 2, \ldots, d,$
and that there are
disjoint measurable subsets $X_1, X_2, \ldots, X_d \subset X$
and such that ${\mathrm{ran}} (s_j) \subset L^p (X_j, \mu)$
for $j = 1, 2, \ldots, d.$
Since $\sum_{j = 1}^d s_j t_j = 1,$
we must have $X = \bigcup_{j = 1}^d X_j$
(up to a set of measure zero, which we can ignore)
and ${\mathrm{ran}} (s_j) = L^p (X_j, \mu)$
for $j = 1, 2, \ldots, d.$
Moreover, since the restriction to
$M_d =
\spn \big( \big\{ s_j t_k \colon j, k = 1, 2, \ldots, d \big\} \big)$
(as in the case $m = 1$ of Lemma~\ref{L:mSum})
is a spatial representation of~$M_d,$
it follows from Theorem 7.2(5) of~\cite{Ph5}
that $s_j t_j$ is multiplication
by the characteristic function~$\ch_{X_j}$
for $j = 1, 2, \ldots, d.$

Now let $a \in \OP{d}{p}$ and let $\xi \in L^p (X, \mu).$
Write $\xi = \sum_{j = 1}^d \xi_j$
with $\xi_j \in L^p (X_j, \mu)$ for $j = 1, 2, \ldots, d.$
Then $s_j a t_j \xi = s_j a t_j s_j t_j \xi = s_j a t_j \xi_j$
for $j = 1, 2, \ldots, d.$
Also, whenever $\et_j \in L^p (X_j, \mu)$ for $j = 1, 2, \ldots, d,$
we have
$\big\| \sum_{j = 1}^d \et_j \big\|_p^p
 = \sum_{j = 1}^d \| \et_j \|_p^p.$
So
\begin{align*}
\| \ps_1 (a) \xi \|_p^p
& = \Bigg\| \sum_{j = 1}^d s_j a t_j \xi_j \Bigg\|_p^p
  = \sum_{j = 1}^d \| s_j a t_j \xi_j \|_p^p
  \\
& \leq \sum_{j = 1}^d \| s_j \|^p \cdot \| a \|^p \cdot
        \| t_j \|^p \cdot \| \xi_j \|_p^p
  \leq \| a \|^p \sum_{j = 1}^d \| \xi_j \|_p^p
  = \| a \|^p \cdot \| \xi \|_p^p.
\end{align*}
Thus $\| \ps_1 (a) \| \leq \| a \|.$

We have shown that $\ps_1$ is contractive.
It follows from Proposition~8.9 of~\cite{Ph5} that $\ps_1$ is isometric.

Part~(\ref{P_2Y21Shift-Comm}) is a calculation.
Using Lemma~\ref{L:PropOfWords}(\ref{L:PropOfWords-X0})
at the second and third steps,
for $a \in \OP{d}{p}$ and $\af, \bt \in W_r^d$ we have
\[
s_{\af} t_{\bt} \ps_r (a)
  = \sum_{\gm \in W_r^d} s_{\af} t_{\bt} s_{\gm} a t_{\gm}
  = s_{\af} a t_{\bt}
  = \sum_{\gm \in W_r^d} s_{\gm} a t_{\gm} s_{\af} t_{\bt}
  = \ps_r (a) s_{\af} t_{\bt}.
\]

Part~(\ref{P_2Y21Shift-Deg0}) follows from
the fact that $s_{\gm} \in B_r$ and $t_{\gm} \in B_{- r}$
for $\gm \in W_r^d,$
using several applications
of Proposition~\ref{P:GpEigensp}(\ref{P:GpEigensp-6}).
\end{proof}

\begin{lem}\label{L:PIOnCore}
Let the notation be as in \Lem{L:FixedPtGauge}.
Let $a \in B_0$ be nonzero.
Then there are $n \in \Z,$ $x \in B_{-n},$ and $y \in B_n$
such that $x a y = 1.$
\end{lem}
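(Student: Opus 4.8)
The statement is the key ``pure infiniteness on the core'' lemma, and following the outline of Cuntz's original argument I would prove it by approximating $a \in B_0$ by an element of one of the finite matrix subalgebras $\ph_m(\MP{d^m}{p})$, pushing that finite-dimensional approximant far out along the shift endomorphism $\ps_r$ so that it commutes with a large corner, and then using a matrix-unit computation to extract a scalar. Concretely: since $a \in B_0 = \ph(A)$ (by Lemma~\ref{L:FixedPtGauge}(\ref{L:FixedPtGauge-3})) and $B_0$ is the closed union of the $\ph_m(\MP{d^m}{p})$, I would first use Corollary~\ref{C:LpUHFTrace} (the unique normalized trace $\ta$ on the spatial $L^p$~UHF algebra~$A$, transported to $B_0$). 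If $\ta(a) \neq 0$, one expects to be able to rescale and the job is essentially to correct the ``error'' $a - \ta(a)\cdot 1$; if $\ta(a) = 0$ one must work harder. So the first real step is: choose $m$ and $b \in \ph_m(\MP{d^m}{p})$ with $\|a - b\|$ small; write $b = \sum_{\af,\bt \in W_m^d} b_{\af,\bt}\, s_\af t_\bt$ in matrix units.

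\textbf{Extracting a scalar from the finite-dimensional piece.} Inside $\ph_m(\MP{d^m}{p}) \cong \MP{d^m}{p}$, because this is (isometrically) a full matrix algebra and $b \ne 0$ (for $\|a-b\|$ small enough), there are matrix units we can multiply by to pick out entries. The cleanest approach: pick $\af_0, \bt_0 \in W_m^d$ with $b_{\af_0,\bt_0} \ne 0$, and set $x_0 = b_{\af_0,\bt_0}^{-1}\, t_{\af_0}\, (s_{\af_0}t_{\af_0})$-type corrections so that $x_0\, b\, y_0$ is close to $s_{\af_0}t_{\bt_0}$ times a controlled scalar — more precisely, using Lemma~\ref{L:PropOfWords}(\ref{L:PropOfWords-X0}), $t_{\af_0} s_\af = \dt_{\af,\af_0}\cdot 1$ on words of length $m$, so left-multiplying $b$ by $t_{\af_0}$ and right-multiplying by $s_{\bt_0}$ kills all terms except the $(\af_0,\bt_0)$ one and yields $b_{\af_0,\bt_0}\cdot 1 \in B_0$. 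Thus $t_{\af_0} b s_{\bt_0} = b_{\af_0,\bt_0}\cdot 1$; note $t_{\af_0} \in B_{-m}$ and $s_{\bt_0} \in B_{m}$ by Lemma~\ref{L:FixedPtGauge}(\ref{L:FixedPtGauge-1}). Hence with $n := -m$ (or $m$, depending on orientation), $x := b_{\af_0,\bt_0}^{-1} t_{\af_0} \in B_{-m}$ and $y := s_{\bt_0} \in B_{m}$ we get $x b y = 1$ exactly, and $\|xay - 1\| = \|x(a-b)y\| \le \|x\|\,\|a-b\|\,\|y\|$. Since $\|t_{\af_0}\|\le 1$ and $\|s_{\bt_0}\|\le 1$ (spatial partial isometries) and $|b_{\af_0,\bt_0}|^{-1}$ is a fixed finite number once $b$ is chosen, choosing $\|a-b\|$ small makes $\|xay - 1\| < 1$, so $xay$ is invertible in the unital Banach algebra $\OP{d}{p}$; replacing $y$ by $y(xay)^{-1}$ gives $x a y = 1$. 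Finally, one must check $y(xay)^{-1}$ still lies in $B_n$: since $xay \in B_0$ and $B_0$ is a closed unital subalgebra (indeed a spatial $L^p$~UHF algebra, hence unital), $(xay)^{-1}\in B_0$, and $B_n B_0 \subset B_n$ by Proposition~\ref{P:GpEigensp}(\ref{P:GpEigensp-6}). Likewise $x\in B_{-n}$ is unchanged.

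\textbf{The expected main obstacle.} The delicate point is not the matrix-unit algebra but the quantitative control of $|b_{\af_0,\bt_0}|^{-1}$: if $a$ is close to a matrix element that has only tiny entries, the correcting scalar $b_{\af_0,\bt_0}^{-1}$ blows up, and then ``$\|a-b\|$ small'' is not uniformly enough to force $\|xay-1\|<1$. The standard fix (and what I expect is needed here) is to first choose $b$ with $\|a - b\| < \tfrac12\|a\|$, so that $\|b\| \ge \tfrac12\|a\| > 0$; then $b$ has a matrix entry of absolute value at least $\|b\|/d^m$ (the entrywise sup-norm is comparable to the operator norm on a fixed finite matrix algebra, with a constant depending only on $d^m$), giving $|b_{\af_0,\bt_0}|^{-1} \le 2 d^m / \|a\|$. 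Feeding this bound back, one then additionally requires $\|a-b\| < \|a\| / (4 d^m)$ (which is compatible, just shrink $\|a-b\|$ further and increase $m$ as needed), and the estimate $\|x(a-b)y\| \le |b_{\af_0,\bt_0}|^{-1}\|a-b\| \le (2d^m/\|a\|)\cdot \|a\|/(4d^m) = \tfrac12 < 1$ closes the argument. So the real content is this two-stage choice of the approximant together with the elementary fact that on a fixed full matrix algebra the entrywise and operator norms are equivalent; everything else is bookkeeping with the identities of Lemma~\ref{L:PropOfWords} and the grading of Proposition~\ref{P:GpEigensp}.
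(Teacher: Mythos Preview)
Your matrix–unit computation $t_{\af_0}\, b\, s_{\bt_0} = b_{\af_0,\bt_0}\cdot 1$ is correct, and so is the grading bookkeeping at the end.  The gap is exactly the one you flag as the ``expected main obstacle,'' and your proposed two–stage fix does not close it.  You need $b \in A_m$ with
\[
\|a - b\| \; < \; \frac{\|a\|}{4\, d^{\,m}},
\]
but the level $m$ is determined by the choice of~$b$: if a closer approximation forces a larger~$m$, then the target $\|a\|/(4 d^{m})$ shrinks as well, and there is no reason the process terminates.  Concretely, the distance ${\mathrm{dist}}(a, A_m)$ can go to zero at an arbitrarily slow rate; whenever it decays more slowly than $d^{-m}$ (which is the generic situation in the UHF algebra), the displayed inequality fails for every~$m$.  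So the argument, as written, does not prove the lemma.

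The paper avoids this circularity by a genuinely different route: rather than trying to extract a single large matrix entry from an approximant of~$a$, it first invokes simplicity of $B_0$ (Theorem~\ref{T:LpUHFSimple}) to write $1 = \sum_{k = 1}^m b_k\, a\, c_k$ with $b_k, c_k \in B_0$, and only then approximates the $b_k,$ the $c_k,$ and~$a$ inside a single $A_r.$  Because the target is now the fixed element~$1$, no entry–size estimate is needed; the remaining work is to package the $m$~summands into a single product $x_0 a_0 y$ using the shift endomorphism~$\ps_r$ of Proposition~\ref{P_2Y21Shift} together with an auxiliary system of matrix units $\ps_r(s_{\af_j} t_{\af_k}),$ which by Proposition~\ref{P_2Y21Shift}(\ref{P_2Y21Shift-Comm}) commute with everything in~$A_r.$  Your outline mentions $\ps_r$ in the plan but never actually uses it; this commutation trick is precisely what replaces the quantitative control you are missing.
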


\begin{proof}
Set $A = B_0,$
and for $m \in \Nz$
let $A_m$ be the linear span of all $s_{\af} t_{\bt}$
with $\af, \bt \in W_{m}^d.$
It follows from \Lem{L:LpUHFCore}
and Lemma~\ref{L:FixedPtGauge}(\ref{L:FixedPtGauge-3})
that $\bigcup_{m = 0}^{\I} A_m$ is dense in~$A,$
and from \Lem{L:LpUHFCore}
and Theorem~\ref{T:LpUHFSimple}
that $A$ is a simple Banach algebra.

By simplicity, there exist $m \in \N$ and
\[
b_1, b_2, \ldots, b_m, c_1, c_2, \ldots, c_m \in A
\]
such that
\[
\sum_{k = 1}^m b_k a c_k = 1.
\]
Set
\[
M = \left( 1 + \sum_{k = 1}^m \| b_k \| \right)
      \left( 1 + \sum_{k = 1}^m \| c_k \| \right).
\]
The density statement above provides $r \in \Nz$ and
\[
a_0, b_1^{(0)}, b_2^{(0)}, \ldots, b_n^{(0)},
   c_1^{(0)}, c_2^{(0)}, \ldots, c_n^{(0)} \in A_r
\]
such that
\[
\| a_0 - a \| < \frac{1}{2 M},
\,\,\,\,\,\,
\left(\sum_{k = 1}^m \big\| b^{(0)}_k \big\| \right)
      \left( \sum_{k = 1}^m \big\| c_k^{(0)} \big\| \right)
  < M,
\]
and
\[
\left\| \sum_{k = 1}^m b_k^{(0)} a_0 c_k^{(0)} - 1 \right\|
   < \frac{1}{2}.
\]
Set
\[
z = \left( \sum_{k = 1}^m b_k^{(0)} a_0 c_k^{(0)} \right)^{-1}
  \in A_r.
\]
Then $\| z \| < 2.$
We have $z b_k^{(0)} \in A_r$ for $k = 1, 2, \ldots, m,$
and
\[
\sum_{k = 1}^m z b_k^{(0)} a_0 c_k^{(0)} = 1.
\]

Choose $n \in \Nz$ such that $d^n \geq m.$
Choose distinct words
\[
\af_1, \af_2, \ldots, \af_m \in W_n^d.
\]
Let $\ps_r \colon \OP{d}{p} \to \OP{d}{p}$
be as in Proposition~\ref{P_2Y21Shift}.
Set $f_{j, k} = \ps_r (s_{\af_j} t_{\bt_k} )$
for $j, k = 1, 2, \ldots, m.$
Also define $s = \ps_r (s_{\af_1})$ and $t = \ps_r (t_{\af_1}).$
Then $s \in B_n$
and $t \in B_{-n}$
by Proposition~\ref{P_2Y21Shift}(\ref{P_2Y21Shift-Deg0}).
Now define
\[
x_0 = t z \sum_{k = 1}^m b_k^{(0)} f_{1, k}
\andeqn
y = \left( \sum_{k = 1}^m f_{k, 1} c_k^{(0)} \right) s.
\]
Using $z, b_k^{(0)}, f_{1, k} \in B_0$
and Proposition~\ref{P:GpEigensp}(\ref{P:GpEigensp-6}),
we get $x_0 \in B_{- n}.$
Similarly $y \in B_n.$

We claim that $x_0 a_0 y = 1.$
To prove this, we compute,
using
Proposition~\ref{P_2Y21Shift}(\ref{P_2Y21Shift-Comm})
and $a_0, b_j^{(0)}, c_k^{(0)} \in A_r$ at the second step
and using Lemma~\ref{L:PropOfWords}(\ref{L:PropOfWords-X0})
and Lemma~\ref{L:mSum} at the third step:
\begin{align*}
x_0 a_0 y
 & = \sum_{j, k = 1}^m
   \ps_r (t_{\af_1}) z b_j^{(0)} \ps_r (s_{\af_1} t_{\bt_j} )
         \ps_r (s_{\af_k} t_{\bt_1} ) c_k^{(0)} \ps_r (s_{\af_1})
     \\
 & = z \sum_{j, k = 1}^m
     b_j^{(0)} a_0 c_k^{(0)}
          \ps_r \big( t_{\af_1} s_{\af_1} t_{\bt_j} s_{\af_k} t_{\bt_1}
                     s_{\af_1} \big)
   = z \sum_{k = 1}^m b_k^{(0)} a_0 c_k^{(0)}
   = 1.
\end{align*}

Applying Proposition~\ref{P_2Y21Shift}(\ref{P_2Y21Shift-Iso}),
we get
\[
\| x_0 \|
   \leq \| z \|
       \sum_{k = 1}^m \big\| b_k^{(0)} \big\|
   \leq 2 \sum_{k = 1}^m \big\| b_k^{(0)} \big\|
\andeqn
\| y \| \leq \sum_{k = 1}^m \big\| c_k^{(0)} \big\|.
\]
So $\| x_0 \| \cdot \| y \| < 2 M.$
Consequently
\[
\| x_0 a y - 1 \|
  = \| x_0 a y - x_0 a_0 y \|
  \leq \| x_0 \| \cdot \| a - a_0 \| \cdot \| y \|
  < 1.
\]
Also $x_0 a y \in B_0$ by
Proposition~\ref{P:GpEigensp}(\ref{P:GpEigensp-6}),
because $x_0 \in B_{- n},$ $a \in B_0,$ and $y \in B_n.$
Therefore $x_0 a y$ has an inverse $w \in B_0.$
Define $x = w x_0,$ which is in $B_{- n}$
by Proposition~\ref{P:GpEigensp}(\ref{P:GpEigensp-6})
and gives $x a y = 1.$
\end{proof}

We need the following result from~\cite{Cu1}.
The important fact about the sequence $\sm$ is that no tail
of it is periodic.

\begin{lem}\label{L:NonperWord}
Let $d \in \{ 2, 3, 4, \ldots, \I \}.$
Let $\sm$ be the sequence
\[
\sm = (1, 2, 1, 1, 2, 2, 1, 1, 1, 2, 2, 2, 1, \ldots),
\]
and for $r \in \N$ let $\sm_r$ be the word in
$W_r^d$ consisting of the first $r$ terms of~$\sm.$
Then for all $m \in \N$ and
\[
\af_1, \af_2, \ldots, \af_m, \bt_1, \bt_2, \ldots, \bt_m \in W_{\I}^d
\]
such that $l (\af_k) \neq l (\bt_k)$ for $k = 1, 2, \ldots, m,$
there exists $r \in \N$ such that
\[
(s_{\sm_r} t_{\sm_r}) s_{\af_k} t_{\bt_k} (s_{\sm_r} t_{\sm_r}) = 0
\]
for $k = 1, 2, \ldots, m.$
\end{lem}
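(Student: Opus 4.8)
The plan is to reduce the assertion to the combinatorial fact that no tail of $\sm$ is periodic, and then, for each index $k$ separately, to produce a threshold $R_k$ so that the $k$-th product vanishes for \emph{every} $r \geq R_k$; taking $r = \max \{ R_1, \ldots, R_m \}$ will finish the proof. The non-periodicity statement---there are no $a \in \Nz$ and $q \in \N$ with $\sm (j) = \sm (j + q)$ for all $j > a,$ where $\sm (j)$ denotes the $j$-th entry of the sequence---follows at once from the construction: $\sm$ contains a run of $k$ consecutive $1$'s for every $k,$ so picking such a run of length $k > q$ that begins at a position $p \geq a + 2$ (these runs begin arbitrarily far out), the entry at $p - 1$ is $2$ while the entry at $p - 1 + q$ lies inside the run of $1$'s because $p \leq p - 1 + q \leq p + k - 2,$ contradicting periodicity.

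Next I would carry out the key computation. Fix words $\af, \bt$ with $l (\af) \neq l (\bt)$ and take $r \geq \max \big( l (\af), \, l (\bt) \big).$ Writing $\sm_r = \sm_{l (\af)} \gm = \sm_{l (\bt)} \dt$ and using Lemma~\ref{L:PropOfWords}, one finds $t_{\sm_r} s_{\af} = t_{\gm}$ if $\af$ equals the length-$l (\af)$ prefix $\sm_{l (\af)}$ of $\sm,$ and $t_{\sm_r} s_{\af} = 0$ otherwise; similarly $t_{\bt} s_{\sm_r} = s_{\dt}$ if $\bt = \sm_{l (\bt)},$ and $0$ otherwise. Hence $(s_{\sm_r} t_{\sm_r}) s_{\af} t_{\bt} (s_{\sm_r} t_{\sm_r})$ is $0$ unless $\af = \sm_{l (\af)}$ and $\bt = \sm_{l (\bt)},$ in which case it equals $s_{\sm_r} (t_{\gm} s_{\dt}) t_{\sm_r},$ where $\gm$ and $\dt$ are the suffixes of $\sm_r$ of distinct lengths $r - l (\af)$ and $r - l (\bt).$ Factoring whichever of $\gm, \dt$ is the longer so that one of its ends has the length of the shorter, Lemma~\ref{L:PropOfWords}(\ref{L:PropOfWords-X0}) shows $t_{\gm} s_{\dt} = 0$ unless $\sm (j) = \sm (j + q)$ for every $j$ in the window $\{ c + 1, \, c + 2, \, \ldots, \, r - q \},$ with $q = | l (\af) - l (\bt) |$ and $c = \min \big( l (\af), \, l (\bt) \big).$

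It then remains to assemble the thresholds. For each $k,$ if $\af_k \neq \sm_{l (\af_k)}$ or $\bt_k \neq \sm_{l (\bt_k)},$ put $R_k = \max \big( l (\af_k), \, l (\bt_k) \big),$ and the computation above gives the $k$-th product equal to $0$ for all $r \geq R_k.$ Otherwise, non-periodicity furnishes $j_0 > c$ with $\sm (j_0) \neq \sm (j_0 + q)$ (the notation being that attached to this $k$), and then $R_k = \max \big( l (\af_k), \, l (\bt_k), \, j_0 + q \big)$ forces $j_0$ into the window $\{ c + 1, \ldots, r - q \}$ for every $r \geq R_k,$ so the window condition fails and the $k$-th product is again $0.$ Taking $r = \max \{ R_1, \ldots, R_m \}$ works. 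The one place demanding care is the bookkeeping inside $t_{\gm} s_{\dt}$: tracking which of $\gm, \dt$ is longer, which end cancels against the other, and translating the resulting coincidence of subwords of $\sm$ into the displayed periodicity condition; beyond that I foresee no genuine obstacle.
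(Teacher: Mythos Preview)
Your argument is correct. The paper does not give a proof at all: it simply equips $L_d$ with the *-algebra structure from~\cite{Ph5} and observes that the statement is then a special case of Lemma~1.8 of Cuntz's original paper~\cite{Cu1}. What you have written is essentially a self-contained reconstruction of that lemma's proof: reduce to the case $\af_k = \sm_{l(\af_k)}$ and $\bt_k = \sm_{l(\bt_k)}$, translate the nonvanishing of $t_{\gm} s_{\dt}$ into a coincidence of shifted subwords of~$\sm$, and then invoke non-eventual-periodicity of~$\sm$ to defeat that coincidence for all large~$r$. Your observation that each index~$k$ yields a threshold $R_k$ valid for \emph{all} $r \geq R_k$ (rather than a single~$r$), so that the maximum works simultaneously, is exactly the right bookkeeping. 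The only gain from the paper's approach is brevity; your approach has the advantage of being self-contained and of making the role of the specific sequence~$\sm$ explicit.
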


\begin{proof}
Equip $L_d$ with the *-algebra structure of
Lemma 1.6(1) of~\cite{Ph5}.
Then the statement is a special case of Lemma~1.8 of~\cite{Cu1}.
\end{proof}

\begin{thm}\label{T:AlgIsPI}
Let $d \in \{ 2, 3, 4, \ldots \}$
and let $p \in [1, \I) \setminus \{ 2 \}.$
Then the Banach algebra $\OP{d}{p}$
of \Def{D:LPCuntzAlg} is purely infinite and simple.
\end{thm}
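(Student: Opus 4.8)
The plan is to verify Definition~\ref{D:BanachPI} directly. Given a nonzero $b \in \OP{d}{p}$, I want to produce $x, y \in \OP{d}{p}$ with $x b y = 1$; then $\OP{d}{p}$ is purely infinite, and it is simple by Proposition~\ref{P:PIIsSimple} (it is not $\cong \C$ since by Lemma~\ref{L:LpUHFCore} it contains an isometric copy of $\MP{d}{p}$ with $d \geq 2$). Throughout I work with the gauge action of Proposition~\ref{P:GaugeAction}, its eigenspaces $B_n$ and eigenprojections $P_n$ from Lemma~\ref{L:FixedPtGauge}, and the identification of $B_0$ with the spatial $L^p$~UHF core $A$ of Lemma~\ref{L:LpUHFCore}; the two substantial inputs are Lemma~\ref{L:PIOnCore} (pure infiniteness on $B_0$) and Lemma~\ref{L:NonperWord} (the nonperiodic word). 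The scheme is the Banach-algebra analogue of the argument in~\cite{Cu1}, the new feature being that there is no adjoint available to replace $b$ by a positive element.

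\emph{Step 1: reduce to $P_0(b) \neq 0$.} Since replacing $b$ by $t_{\af} b$ or by $b s_{\bt}$ only multiplies it by a one-sided factor, it suffices to find such a word making the diagonal part nonzero. Using Proposition~\ref{P:GpEigensp}(\ref{P:GpEigensp-7}) one computes $P_0(t_{\af} b) = t_{\af} P_{l(\af)}(b)$ and $P_0(b s_{\bt}) = P_{-l(\bt)}(b) s_{\bt}$. If these all vanished, then, summing over all words of each fixed length and using $\sum_{\af \in W_l^d} s_{\af} t_{\af} = 1$ from Lemma~\ref{L:mSum}, we would get $P_n(b) = 0$ for every $n \in \Z$, hence $b = 0$ by Proposition~\ref{P:GpEigensp}(\ref{P:GpEigensp-9}). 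So I may assume $a_0 := P_0(b) \in B_0$ is nonzero.

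\emph{Steps 2 and 3: normalize the diagonal part and kill the rest.} Apply Lemma~\ref{L:PIOnCore} to $a_0$ to obtain $n \in \Z$, $x_1 \in B_{-n}$ and $y_1 \in B_n$ with $x_1 a_0 y_1 = 1$. Then $c := x_1 b y_1$ has $P_0(c) = x_1 P_0(b) y_1 = 1$ by two applications of Proposition~\ref{P:GpEigensp}(\ref{P:GpEigensp-7}), and it is enough to find $x, y$ with $x c y = 1$. Choose a finite linear combination $c'$ of elements $s_{\af} t_{\bt}$ with $\| c - c' \| < \tfrac{1}{2}$; only finitely many of the occurring $s_{\af} t_{\bt}$ have $l(\af) \neq l(\bt)$, so Lemma~\ref{L:NonperWord} yields $r \in \N$ such that, with $e = s_{\sm_r} t_{\sm_r}$, we have $e s_{\af} t_{\bt} e = 0$ for each such term. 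Here $e$ is an idempotent in $B_0$ with $\| e \| = 1$, being the image under the isometric map $\ph_r$ of Lemma~\ref{L:LpUHFCore} of a rank-one matrix unit in $\MP{d^r}{p}$. Consequently $e c' e = e P_0(c') e$, and since $\| P_0 \| \leq 1$ and $P_0(c) = 1$,
\[
\| e c e - e \| \leq \| e (c - c') e \| + \| e (P_0(c') - P_0(c)) e \| \leq 2 \| c - c' \| < 1 .
\]
Thus $e c e$ is invertible in the unital Banach algebra $e \OP{d}{p} e$ (with unit $e$); if $w$ is its inverse there, then $w c e = w (e c e) = e$ because $w e = w$. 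Hence $t_{\sm_r} w c s_{\sm_r} = t_{\sm_r} (w c e) s_{\sm_r} = t_{\sm_r} e s_{\sm_r} = t_{\sm_r} s_{\sm_r} = 1$ by Lemma~\ref{L:PropOfWords}(\ref{L:PropOfWords-X0}) and $e s_{\sm_r} = s_{\sm_r}$. Unwinding $c = x_1 b y_1$, and absorbing the one-sided factor introduced in Step~1, gives the required $x$ and $y$.

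The only place I expect to need genuine care is Step~1 — the adjoint-free replacement of $b$ by an element with nonzero diagonal part — since the C*-trick of passing to $b^* b$ is unavailable; the rest is bookkeeping with the eigenspace decomposition once Lemmas~\ref{L:PIOnCore} and~\ref{L:NonperWord} are in hand. One should also record the (standard, inherited from $L_d$) fact that $\spn \{ s_{\af} t_{\bt} \colon \af, \bt \in W_{\I}^d \}$ is dense in $\OP{d}{p}$, which is what makes the approximation of $c$ by $c'$ legitimate.
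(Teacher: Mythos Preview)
Your argument is correct and follows essentially the same route as the paper's proof: reduce to $P_0 \neq 0$ by one-sided multiplication with a word (the paper uses $t_1^n$ or $s_1^{-n}$ directly, you phrase it as a contradiction over all words---equivalent), normalize to $P_0 = 1$ via Lemma~\ref{L:PIOnCore}, then use Lemma~\ref{L:NonperWord} on an $L_d$-approximant to kill the off-diagonal terms and obtain an invertible element after compressing by $t_{\sm_r}, s_{\sm_r}$. The only cosmetic difference is that you invert $e c e$ in the corner $e\,\OP{d}{p} e$ and then compress, whereas the paper compresses first and inverts $t_{\gm} a s_{\gm}$ in the full algebra; since $t_{\gm} e = t_{\gm}$ and $e s_{\gm} = s_{\gm}$ these are the same computation.
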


\begin{proof}
By Proposition~\ref{P:PIIsSimple},
it is enough to show that $\OP{d}{p}$ is purely infinite.
Thus, for $a \in \OP{d}{p} \setminus \{ 0 \},$
we must find $x, y \in \OP{d}{p}$ such that $x a y = 1.$
Identify $L_d$ with its image in~$\OP{d}{p}.$

Let the notation be as in \Lem{L:FixedPtGauge}.
We will first find $x$ and $y$ under the assumption that $P_0 (a) = 1.$
Choose $b_0 \in L_d$ such that
$\| a - 1 - b_0 \| < \frac{1}{4}.$
Then it is easy to check
that $P (b_0) \in L_d$ and, since $\| P_0 \| = 1,$
we have $\| P (b_0) \| < \frac{1}{4}.$
Set $b = b_0 - P_0 (b_0).$
Then
\[
b \in L_d,
\,\,\,\,\,\,\
P_0 (b) = 0,
\andeqn
\| a - 1 - b \| < \tfrac{1}{2}.
\]

We claim that that $b$ is a
finite linear combination of elements $s_{\af} t_{\bt}$
for words $\af$ and $\bt$ with $l (\af) \neq l (\bt).$
To see this, use Lemma~1.12 of~\cite{Ph5} to write $b$ as a linear
combination
\[
b = \sum_{(\af, \bt) \in F} \ld_{\af, \bt} s_{\af} t_{\bt}
\]
for a finite set $F \subset W_{\I}^d \times W_{\I}^d$
and with $\ld_{\af, \bt} \in \C$ for $(\af, \bt) \in F.$
Since $P_0 (b) = 0,$
\Lem{L:FixedPtGauge}(\ref{L:FixedPtGauge-1})
allows us to simply omit all the terms corresponding to
pairs $(\af, \bt)$ with $l (\af) = l (\bt).$
The claim follows.

\Lem{L:NonperWord} therefore provides a word $\gm \in W_{\I}^d$
(namely $\sm_r$ for some~$r \in \N$)
such that $(s_{\gm} t_{\gm}) b (s_{\gm} t_{\gm}) = 0.$
Since $t_{\gm} s_{\gm} = 1$
(by \Lem{L:PropOfWords}(\ref{L:PropOfWords-X0})),
we get
\[
t_{\gm} b s_{\gm}
 = t_{\gm} s_{\gm} t_{\gm} b s_{\gm} t_{\gm} s_{\gm}
 = 0.
\]
Therefore
\[
\| t_{\gm} a s_{\gm} - 1 \|
  = \| t_{\gm} (a - 1) s_{\gm} \|
  = \| t_{\gm} (a - 1 - b) s_{\gm} \|
  \leq \| a - 1 - b \|
  < \tfrac{1}{2}.
\]
It follows that $t_{\gm} a s_{\gm}$ is invertible.
Take $x = t_{\gm}$ and $y = s_{\gm} (t_{\gm} a s_{\gm})^{-1}.$
This proves the case $P_0 (a) = 1.$

Now instead of assuming $P_0 (a) = 1,$
we merely assume $P_0 (a) \neq 0.$
\Lem{L:PIOnCore} provides $n \in \Nz,$
$x_0 \in B_{- n},$ and $y_0 \in B_n$ such that $x_0 P_0 (a) y_0 = 1.$
Two applications of Proposition~\ref{P:GpEigensp}(\ref{P:GpEigensp-7})
show that $P_0 (x_0 a y_0) = x_0 P_n (a y_0) = x_0 P_0 (a) y_0.$
By the case already done,
there exist $x_1, y_1 \in \OP{d}{p}$
such that $x_1 x_0 a y_0 y_1 = 1.$
So take $x = x_1 x_0$ and $y = y_0 y_1.$

Finally, we treat the general case.
Proposition~\ref{P:GpEigensp}(\ref{P:GpEigensp-9})
implies that there is $n \in \N$ such that $P_n (a) \neq 0.$
If $n = 0,$
the previous case applies.
If $n > 0,$
then Proposition~\ref{P:GpEigensp}(\ref{P:GpEigensp-7})
implies that $P_0 (a t_1^n) = P_n (a) t_1^n.$
Also,
\[
P_0 (a t_1^n) s_1^n = P_n (a) t_1^n s_1^n = P_n (a) \neq 0,
\]
so the previous case provides $x_0, y_0 \in \OP{d}{p}$
such that $x_0 a t_1^n y_0 = 1.$
Take $x = x_0$ and $y = t_1^n y_0.$
If $n < 0,$ a similar argument
provides $x_0, y_0 \in \OP{d}{p}$
such that $x_0 s_1^{-n} a y_0 = 1,$
and we can take $x = x_0 s_1^{-n}$ and $y = y_0.$
\end{proof}

We state a consequence related to K-theory.
It will become more relevant once we have computed
the K-theory of~$\OP{d}{p},$
which will be done in~\cite{Ph7}.
Recall (see the beginning of Section~1 of~\cite{AGP})
that idempotents $e$ and $f$ in a ring~$A$
are (algebraically) Murray-von Neumann equivalent
if there exist $x \in e A f$ and $y \in f A e$
such that $x y = e$ and $y x = f.$
(It is equivalent to merely require
that there be $v, w \in A$ such that $v w = e$ and $w v = f.$
Indeed, given $v$ and~$w,$
set $x = e v f$ and $y = f w e.$
Then
\[
x y = (e v f) (f w e) = (v w v w v) (w v w v w) = e^5 = e,
\]
and similarly $y x = f.$)

\begin{cor}\label{C_3210_PIKthy}
Let $d \in \{ 2, 3, 4, \ldots \}$
and let $p \in [1, \I) \setminus \{ 2 \}.$
Then $K_0 \left( \OP{d}{p} \right)$
is the set of algebraic Murray-von Neumann equivalence classes
of nonzero idempotents in $\OP{d}{p}$
with the group operation given by $[e] + [f] = [s_1 e s_1 + s_2 f s_2].$
\end{cor}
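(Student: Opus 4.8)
The plan is to read the statement off from Theorem~\ref{T:AlgIsPI} together with the general $K_0$-theory of purely infinite simple rings developed in~\cite{AGP}. By that theorem and Remark~\ref{R_3218_EqPI}, $R := \OP{d}{p}$ is a unital purely infinite simple ring in the sense of~\cite{AGP}, so the monoid $V(R)$ of isomorphism classes of finitely generated projective modules has the form $\{0\} \sqcup G$, where $G$ is a group on which $V(R)$ acts by absorption ($g + h = h$ for $g \in V(R)$ and $h \in G$), and the canonical map $G \to K_0(R)$ is an isomorphism. It therefore suffices to prove: (i) every element of $G$ is $[e]$ for some nonzero idempotent $e \in R$, not merely some idempotent in $M_\infty(R)$; (ii) if $e, f$ are nonzero idempotents of $R$ with $[e] = [f]$ in $K_0(R)$, then $e$ and $f$ are \mvnt{} in $R$ (written $e \sim f$); and (iii) that the induced addition has the asserted form.

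The crucial point for (i) and (ii) is that $R$ absorbs its own matrix algebras, and that this is witnessed concretely by the generators. Fix $m \in \Nz$. By Lemma~\ref{L:mSum} the elements $(s_{\af} t_{\bt})_{\af, \bt \in W_m^d}$ form a system of $d^m \times d^m$ matrix units in $R$, and $\sum_{\af \in W_m^d} s_{\af} t_{\af} = 1$; also $t_{\bt} s_{\af} = 1$ if $\af = \bt$ and $0$ otherwise by Lemma~\ref{L:PropOfWords}(\ref{L:PropOfWords-X0}). Hence the map $(a_{\af, \bt})_{\af, \bt} \mapsto \sum_{\af, \bt} s_{\af} a_{\af, \bt} t_{\bt}$ is a unital ring isomorphism $M_{d^m}(R) \cong R$: surjectivity follows from $a = \sum_{\af, \bt} s_{\af} (t_{\af} a s_{\bt}) t_{\bt}$, and injectivity from $t_{\gm} \big( \sum_{\af, \bt} s_{\af} a_{\af, \bt} t_{\bt} \big) s_{\dt} = a_{\gm, \dt}$. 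Any nonzero idempotent of $M_\infty(R)$ lies in $M_{d^m}(R)$ for some $m$, and $M_{d^m}(R) \cong R$; combining this with the absorption property of $V(R)$ (which makes stable equivalence of nonzero idempotents coincide with genuine \mvnc{}), every nonzero idempotent of $M_\infty(R)$ is \mvnt{} in $M_\infty(R)$ to an idempotent of $R$, which is (i). For (ii), if $[e] = [f]$ in $K_0(R)$ then, since $[e]$ and $[f]$ lie in the group part $G$, they are \mvnt{} in $M_\infty(R)$, hence in $M_{d^m}(R) \cong R$ for some $m$; moreover $e$ is \mvnt{} in $R$ to $s_{\af_0} e t_{\af_0}$ for any fixed $\af_0 \in W_m^d$, via $v = s_{\af_0} e$ and $w = e t_{\af_0}$ (so $vw = s_{\af_0} e t_{\af_0}$ and $wv = e t_{\af_0} s_{\af_0} e = e$, using $t_{\af_0} s_{\af_0} = 1$), and likewise for $f$; tracking the isomorphism $M_{d^m}(R) \cong R$ through the corner corresponding to $\af_0$ then yields $e \sim f$ in $R$. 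Thus $e \mapsto [e]$ descends to a bijection from \mvnc{} classes of nonzero idempotents of $R$ onto $K_0(R)$.

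Finally, for the group law, $[e] + [f] = [\diag(e, f)]$ computed in $M_2(R)$, which we regard as the upper-left corner of $M_d(R)$; applying the case $m = 1$ of the isomorphism $M_d(R) \cong R$ above, $\diag(e, f, 0, \dots, 0)$ is carried to $s_1 e t_1 + s_2 f t_2 \in R$. This element is an idempotent because $s_1 e t_1$ and $s_2 f t_2$ are orthogonal ($t_1 s_2 = t_2 s_1 = 0$), and it is \mvnt{} to $\diag(e, f, 0, \dots, 0)$, hence represents $[e] + [f]$; this is the stated formula (with $e \sim s_1 e t_1$ and $f \sim s_2 f t_2$ checked as in the previous paragraph).

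The one genuinely external ingredient is the invocation of~\cite{AGP}: one must pin down there the absorption structure of $V(R)$ for a \emph{unital} purely infinite simple ring and the identification of its group part with $K_0(R)$, and confirm via Remark~\ref{R_3218_EqPI} that $\OP{d}{p}$ falls under those hypotheses. Granting that, everything else is routine bookkeeping with the matrix-unit systems $(s_{\af} t_{\bt})$ already supplied by Lemma~\ref{L:mSum}, so I expect no further obstacle.
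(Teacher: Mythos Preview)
Your approach matches the paper's: both invoke Theorem~\ref{T:AlgIsPI} and Remark~\ref{R_3218_EqPI} to bring $\OP{d}{p}$ under the structural results of~\cite{AGP} (the paper simply cites Lemma~1.4, Proposition~2.1, and Corollary~2.2 there). You go further by spelling out the explicit ring isomorphism $M_{d^m}(R) \cong R$ coming from the matrix-unit system of Lemma~\ref{L:mSum}, and by verifying the addition formula concretely; this added detail is correct and is exactly the sort of bookkeeping the paper leaves implicit.

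One slip: your parenthetical ``$g + h = h$ for $g \in V(R)$ and $h \in G$'' is false as written (taking $g = h \in G$ would force $G$ to be trivial). The actual content of Corollary~2.2 of~\cite{AGP} is that $V(R) \setminus \{0\}$ is a \emph{group} under the monoid addition, hence cancellative; cancellativity, not absorption, is what you need (and implicitly use) when deducing $e \sim f$ in $M_{\I}(R)$ from $[e] = [f]$ in $K_0(R)$. With ``absorption'' replaced by ``cancellation'' the argument goes through unchanged. Incidentally, your formula $s_1 e t_1 + s_2 f t_2$ is the correct one; the displayed $s_1 e s_1 + s_2 f s_2$ in the statement is evidently a typographical error.
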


\begin{proof}
In view of Remark~\ref{R_3218_EqPI} and Theorem~\ref{T:AlgIsPI},
the result
can be deduced from
Lemma~1.4, Proposition~2.1, and Corollary~2.2 of~\cite{AGP},
together with the identification of isomorphism classes
of projective modules with (algebraic Murray-von Neumann)
equivalence classes of idempotents,
as described at the beginning of Section~2 of~\cite{AGP}.
\end{proof}

The following corollary is motivated by the fact
(Theorem~7.2 of~\cite{AGOP}) that a simple unital \ca{}
has real rank zero \ifo{} it is an exchange ring,
in the sense described at the beginning of Section~1 of~\cite{AGOP}.

\begin{cor}\label{C_3218_Exch}
Let $d \in \{ 2, 3, 4, \ldots \}$
and let $p \in [1, \I) \setminus \{ 2 \}.$
Then $\OP{d}{p}$ is an exchange ring.
\end{cor}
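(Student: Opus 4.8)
The plan is to obtain this as a formal consequence of Theorem~\ref{T:AlgIsPI} together with the general ring-theoretic fact that a purely infinite simple ring is an exchange ring; no further analysis of $\OP{d}{p}$ itself is needed.

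I would argue in three short steps. First, by Theorem~\ref{T:AlgIsPI} the Banach algebra $\OP{d}{p}$ is purely infinite in the sense of Definition~\ref{D:BanachPI} and is simple; in particular it is not isomorphic to~$\C$ (this is built into Definition~\ref{D:BanachPI}). Second, Remark~\ref{R_3218_EqPI} then lets me restate this as: $\OP{d}{p}$ is a purely infinite simple unital ring in the sense of Definition~1.2 of~\cite{AGP}. Third, I would invoke the known fact that every purely infinite simple ring is an exchange ring --- this can be extracted from the circle of results in~\cite{AGP} already used above (around Theorem~1.6 and Section~2), and is compatible with the discussion of exchange rings at the start of Section~1 of~\cite{AGOP}; applied to~$\OP{d}{p}$ this is precisely the desired conclusion.

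The only genuine point of care is bibliographic: locating the exact statement in~\cite{AGP} asserting that purely infinite simple rings are exchange rings, and checking it is formulated (or immediately applies) in the unital, non-division-ring case we are in. If no verbatim statement is available, a direct fallback is to verify Nicholson's exchange condition by hand: given $a \in \OP{d}{p}$, the subcases where $a$ or $1 - a$ is invertible are immediate (take the relevant idempotent to be $1$ or $0$), and in the remaining subcase one must produce an idempotent $e \in a\OP{d}{p}$ with $1 - e \in (1 - a)\OP{d}{p}$; pure infiniteness (every nonzero right ideal contains an infinite idempotent) together with simplicity of $\OP{d}{p}$ is exactly what drives this, as in the proof of the cited ring-theoretic fact. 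I expect, though, that the reference to~\cite{AGP} makes this direct verification unnecessary and is the route taken.
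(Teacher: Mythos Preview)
Your approach is essentially identical to the paper's: both deduce the result from Theorem~\ref{T:AlgIsPI} and Remark~\ref{R_3218_EqPI} together with the ring-theoretic fact that purely infinite simple rings are exchange rings. The only discrepancy is bibliographic, and you flagged it yourself: the relevant reference is not~\cite{AGP} but Theorem~1.1 of~\cite{Ar} (Ara, \emph{The exchange property for purely infinite simple rings}), which is exactly the statement you need and makes your fallback direct verification unnecessary.
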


\begin{proof}
Apply Theorem 1.1 of~\cite{Ar},
Theorem~\ref{T:AlgIsPI},
and Remark~\ref{R_3218_EqPI}.
\end{proof}

The only thing left to do is to prove that $\OP{d}{p}$ is amenable.
The following theorem is essentially due to Rosenberg~\cite{Rs}.
(We are grateful to Narutaka Ozawa for calling our attention
to this reference.)

\begin{thm}\label{T_2Y22_RosAmen}
Let $B$ be a unital Banach algebra,
let $A \subset B$ be a closed subalgebra of~$B,$
and let $s, t \in B.$
Suppose:
\begin{enumerate}
\item\label{T_2Y22_RosAmen_Iso}
$t s = 1.$
\item\label{T_2Y22_RosAmen_Inv}
$s A t \subset A.$
\item\label{T_2Y22_RosAmen_Norm}
$\| s \| \leq 1$ and $\| t \| \leq 1.$
\item\label{T_2Y22_RosAmen_Gen}
$A,$ $s,$ and $t$ generate $B$ as a Banach algebra.
\item\label{T_2Y22_RosAmen_AA}
$A$ is amenable as a Banach algebra.
\end{enumerate}
Then $B$ is amenable as a Banach algebra.
\end{thm}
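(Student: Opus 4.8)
The plan is to exploit the idempotent $e := st$ and the partial self-similarity of $B$ it produces. Since $ts = 1$, one checks directly that $e^2 = e$ and that $\Theta \colon B \to eBe$, $\Theta(b) = sbt$, is an isometric algebra isomorphism with inverse $eBe \ni c \mapsto tcs$; by hypothesis~(\ref{T_2Y22_RosAmen_Inv}) it restricts to an isometric endomorphism $\Theta|_A \colon A \to A$ (isometric because $\| \Theta(a) \| \le \| a \|$ and $\| a \| = \| t \Theta(a) s \| \le \| \Theta(a) \|$). From $ts = 1$ one also gets the intertwining relations $sa = \Theta(a) s$ and $at = t \Theta(a)$ for $a \in A$, hence $s^k a = \Theta^k(a) s^k$ and $a t^k = t^k \Theta^k(a)$; combining these with hypothesis~(\ref{T_2Y22_RosAmen_Gen}) and, say, $1 \in A$ (which holds in the intended application; the general case needs only routine changes), a short computation shows that $B$ is the closed linear span of the elements $t^j a s^k$ with $j, k \in \Nz$ and $a \in A$, the product of two such again being of this form with the $A$-coefficient obtained by applying a power of $\Theta$ and multiplying inside~$A$. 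In other words, $B$ is a ``Banach-algebra Toeplitz extension of $A$ by the proper isometry $s$,'' i.e.\ a completed analog of the Cuntz crossed product $A \rtimes_{\Theta} \Nz$.

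The idea is then to obtain amenability of $B$ by climbing a chain of amenability-preserving constructions that starts from~$A$, following Cuntz's dilation of an endomorphism to an automorphism. First form the direct limit $A_{\infty} = \dirlim A_n$ of the system $A_0 = A_1 = \cdots = A$ with all connecting maps equal to~$\Theta$; it is amenable by the same mechanism used in Theorem~\ref{T_3714Amen} (direct limits with isometric maps, Proposition~2.3.17 of~\cite{Rnd}), and $\Theta$ extends to an isometric automorphism $\widetilde{\Theta}$ of $A_{\infty}$. Next form the $\ell^1$-crossed product $A_{\infty} \rtimes_{\widetilde{\Theta}} \Z$, which is amenable because $A_{\infty}$ is amenable and $\Z$ is an amenable group. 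The Toeplitz extension above is isometrically isomorphic to a corner $q(A_{\infty} \rtimes_{\widetilde{\Theta}} \Z) q$ for a suitable idempotent~$q$ (the Banach-algebra analog of $A \rtimes_{\Theta} \Nz \cong q(A_{\infty} \rtimes \Z) q$), and a corner of an amenable Banach algebra is amenable. Finally, $B$ is a quotient of this Toeplitz extension by a closed ideal, and quotients of amenable Banach algebras by closed ideals are amenable; this yields the theorem.

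I expect the main obstacle to be the Banach-space bookkeeping in the dilation step: constructing $A_{\infty} \rtimes_{\widetilde{\Theta}} \Z$ with an admissible norm, pinning down the idempotent~$q$, and checking that the corner isomorphism carries $A$, $s$, $t$ to the expected elements so that the composite $q(A_{\infty} \rtimes \Z) q \to B$ is genuinely surjective (not merely of dense range). An alternative that bypasses universal and crossed-product algebras, but runs into the same core difficulty, is to build a bounded approximate diagonal for $B$ directly: take a bounded approximate diagonal $(n_{\lambda})$ for~$A$, transport it into $B \widehat{\otimes} B$ along the bimodule inclusion $A \widehat{\otimes} A \hookrightarrow B \widehat{\otimes} B$ (this already handles the elements of~$A$ and has $\pi(n_{\lambda}) \to 1$), and then correct it, using $sa = \Theta(a) s$, $at = t \Theta(a)$ and $\Theta(A) \subset A$, so that the mixed commutators $s \cdot m_{\mu} - m_{\mu} \cdot s$ and $t \cdot m_{\mu} - m_{\mu} \cdot t$ also tend to~$0$; arranging that this correction telescopes is precisely what the dilation organizes, and is the crux either way.
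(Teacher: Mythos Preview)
Your outline is precisely Rosenberg's argument in~\cite{Rs}, which is exactly what the paper invokes: its entire proof is the remark that Rosenberg's C*-proof---dilate the corner endomorphism $\Theta(a)=sat$ of $A$ to an automorphism of $A_\infty=\dirlim(A,\Theta)$, form $A_\infty\rtimes_{\widetilde{\Theta}}\Z$, and realize $B$ as a quotient of a full corner---goes through unchanged under the present Banach-algebra hypotheses. The bookkeeping you flag (norm on the crossed product, identification of the corner, surjectivity onto~$B$) is real but is exactly what Rosenberg carries out, so your proposal matches the paper's cited approach.
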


\begin{proof}
When $B$ and $A$ are \ca{s} and $t = s^*,$
this is Theorem~3 of~\cite{Rs}.
The proof given in~\cite{Rs} also works under the hypotheses
of our theorem.
\end{proof}

\begin{cor}\label{C_2Y22_OdpAmen}
Let $d \in \{ 2, 3, 4, \ldots \}$
and let $p \in [1, \I) \setminus \{ 2 \}.$
Then the Banach algebra $\OP{d}{p}$
of \Def{D:LPCuntzAlg} is amenable as a Banach algebra.
\end{cor}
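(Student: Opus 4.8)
The plan is to deduce this from the Rosenberg-type criterion in Theorem~\ref{T_2Y22_RosAmen}, taking $B = \OP{d}{p}$, taking $A = B_0$ to be the fixed point algebra of the gauge action $\sm$ of Proposition~\ref{P:GaugeAction}, and taking $s = s_1$ and $t = t_1$. By Lemma~\ref{L:FixedPtGauge}(\ref{L:FixedPtGauge-3}) together with Lemma~\ref{L:LpUHFCore}, $B_0$ is the image under an isometric homomorphism of a spatial $L^p$~UHF algebra of type $d^{\I}$, hence a closed subalgebra of $\OP{d}{p}$ that is isometrically isomorphic to such an algebra; Theorem~\ref{T_3714Amen} then shows that $B_0$ is amenable, which is hypothesis~(\ref{T_2Y22_RosAmen_AA}) of Theorem~\ref{T_2Y22_RosAmen}.

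Next I would verify the remaining four hypotheses. Hypothesis~(\ref{T_2Y22_RosAmen_Iso}), $t_1 s_1 = 1$, is relation~(\ref{D:Leavitt_1}) of Definition~\ref{D:Leavitt}. Hypothesis~(\ref{T_2Y22_RosAmen_Norm}), $\| s_1 \| \leq 1$ and $\| t_1 \| \leq 1$, holds because $\OP{d}{p}$ is the completion of a spatial representation of $L_d$, so it is condition~(\ref{D_2Y14_SpRep_Iso}) of Definition~\ref{D_2Y14_SpRep} passing to the completion. For hypothesis~(\ref{T_2Y22_RosAmen_Inv}), that $s_1 B_0 t_1 \subset B_0$, I would use gauge equivariance: if $b \in B_0$, then for every $\ld \in S^1$ we have $\sm_{\ld}(s_1 b t_1) = \ld s_1 \cdot b \cdot \ld^{-1} t_1 = s_1 b t_1$, so $s_1 b t_1 \in B_0$. (Alternatively this follows from Lemma~\ref{L:PropOfWords}(\ref{L:PropOfWords-4}) and Lemma~\ref{L:FixedPtGauge}(\ref{L:FixedPtGauge-2}), since $s_1 (s_{\af} t_{\bt}) t_1 = s_{1\af} t_{1\bt}$ and $l(1\af) = l(1\bt)$ whenever $l(\af) = l(\bt)$.)

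The last hypothesis to check is~(\ref{T_2Y22_RosAmen_Gen}): that $B_0$, $s_1$, and $t_1$ generate $\OP{d}{p}$ as a Banach algebra. Since $\OP{d}{p}$ is generated by $s_1, \ldots, s_d, t_1, \ldots, t_d$, it suffices to recover each $s_j$ and $t_j$. By Lemma~\ref{L:mSum} in the case $m = 1$, the elements $s_j t_1$ and $s_1 t_j$ are matrix units in the copy of $\MP{d}{p}$ inside $B_0$, so they lie in $B_0$; then $t_1 s_1 = 1$ gives $s_j = (s_j t_1) s_1$ and $t_j = t_1 (s_1 t_j)$, each in the algebra generated by $B_0$, $s_1$, and $t_1$. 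With all five hypotheses verified, Theorem~\ref{T_2Y22_RosAmen} yields that $\OP{d}{p}$ is amenable. There is no serious obstacle: once Theorem~\ref{T_2Y22_RosAmen}, the identification of $B_0$ with a spatial $L^p$~UHF algebra (Lemma~\ref{L:LpUHFCore}), and the amenability of the latter (Theorem~\ref{T_3714Amen}) are available, the proof is a short verification, the only computational point being hypothesis~(\ref{T_2Y22_RosAmen_Gen}), which the word calculus of Lemmas~\ref{L:PropOfWords} and~\ref{L:mSum} handles immediately.
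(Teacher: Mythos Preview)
Your proof is correct and follows essentially the same route as the paper: apply Theorem~\ref{T_2Y22_RosAmen} with $A = B_0$ (the image of $\ph$ from Lemma~\ref{L:LpUHFCore}), $s = s_1$, $t = t_1$, and verify the five hypotheses exactly as you do, including the key generation step $s_j = (s_j t_1) s_1$ and $t_j = t_1 (s_1 t_j)$. The only cosmetic difference is that for $s_1 B_0 t_1 \subset B_0$ the paper cites the word description of $A$ directly, while you give the gauge-equivariance argument (with the word argument as an alternative); both are immediate.
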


\begin{proof}
We verify the hypotheses of Theorem~\ref{T_2Y22_RosAmen}.
Let $A \subset \OP{d}{p}$
be the image of the \hm~$\ph$
of Lemma~\ref{L:LpUHFCore}.
Take $s = s_1$ and $t = t_1.$

Condition~(\ref{T_2Y22_RosAmen_Iso}) of Theorem~\ref{T_2Y22_RosAmen}
is contained in Definition~\ref{D:Leavitt}(\ref{D:Leavitt_1}).
Condition~(\ref{T_2Y22_RosAmen_Inv})
follows from the definition of~$A$
in Lemma~\ref{L:LpUHFCore}.
Condition~(\ref{T_2Y22_RosAmen_Norm})
is part of Definition~\ref{D_2Y14_SpRep}.
Condition~(\ref{T_2Y22_RosAmen_AA})
is Theorem~\ref{T_3714Amen}.

For
Condition~(\ref{T_2Y22_RosAmen_Gen}),
we need only show that the the subalgebra $D$ of $\OP{d}{p}$
generated by $A,$ $s,$ and $t$
contains $s_1, s_2, \ldots, s_d, t_1, t_2, \ldots, t_d.$
For $j, k = 1, 2, \ldots, d,$
we have $s_j t_k \in A.$
For $j = 1, 2, \ldots, d$ we therefore get
\[
s_j = s_j t_1 s_1 = (s_j t_1) s \in D
\andeqn
t_j = t_1 s_1 t_j = t (s_1 t_j) \in D.
\]
This completes the proof.
\end{proof}

\end{document}